\providecommand{\algorithmname}{Algorithm}
\numberwithin{equation}{section}
\numberwithin{figure}{section}
\theoremstyle{plain}
\newtheorem{thm}{\protect\theoremname}[section]
  \theoremstyle{remark}
  \newtheorem{rem}[thm]{\protect\remarkname}
  \theoremstyle{definition}
  \newtheorem{defn}[thm]{\protect\definitionname}
  \theoremstyle{plain}
  \newtheorem{assumption}[thm]{\protect\assumptionname}
  \theoremstyle{plain}
  \newtheorem{lyxalgorithm}[thm]{\protect\algorithmname}
  \theoremstyle{definition}
  \newtheorem{example}[thm]{\protect\examplename}
  \theoremstyle{plain}
  \newtheorem{lem}[thm]{\protect\lemmaname}
  \theoremstyle{plain}
  \newtheorem{prop}[thm]{\protect\propositionname}
  \theoremstyle{plain}
  \newtheorem{cor}[thm]{\protect\corollaryname}
\newcommand{\intr}{\mbox{\rm int}}
\newcommand{\co}{\mbox{\rm co}}
\newcommand{\sspan}{\mbox{\rm span}}
  \providecommand{\algorithmname}{Algorithm}
  \providecommand{\assumptionname}{Assumption}
  \providecommand{\corollaryname}{Corollary}
  \providecommand{\definitionname}{Definition}
  \providecommand{\examplename}{Example}
  \providecommand{\lemmaname}{Lemma}
  \providecommand{\propositionname}{Proposition}
  \providecommand{\remarkname}{Remark}
\providecommand{\theoremname}{Theorem}
\begin{document}
\title[Extended Dykstra's algorithm for polyhedra]{Nonasymptotic and asymptotic linear convergence of an almost cyclic SHQP Dykstra's algorithm for polyhedral problems}

\subjclass[2010]{41A50, 90C25, 68Q25, 47J25}
\begin{abstract}
We show that an almost cyclic (or generalized Gauss- Seidel) Dykstra's
algorithm which incorporates the SHQP (supporting halfspace- quadratic
programming) strategy can achieve nonasymptotic and asymptotic linear
convergence for polyhedral problems. 
\end{abstract}

\author{C.H. Jeffrey Pang}

\thanks{We acknowledge grant R-146-000-214-112 from the Faculty of Science,
National University of Singapore.}

\curraddr{Department of Mathematics\\ 
National University of Singapore\\ 
Block S17 08-11\\ 
10 Lower Kent Ridge Road\\ 
Singapore 119076 }

\email{matpchj@nus.edu.sg}

\date{\today{}}

\keywords{Dykstra's algorithm, alternating projections, best approximation
problem, nonasymptotic convergence rate.}

\maketitle
\tableofcontents{}

\section{Introduction}

We consider the following problem, known as the best approximation
problem (BAP).

\begin{eqnarray}
(BAP) & \min & \!\!\!\!\begin{array}{c}
f(x):=\frac{1}{2}\|x-d\|^{2}\end{array}\label{eq:P-primal}\\
 & \mbox{s.t.} & \!\!\!\!\begin{array}{c}
x\in C:=C_{1}\cap\cdots\cap C_{m},\end{array}\nonumber 
\end{eqnarray}
where $d$ is a given point and $C_{i}$, $i=1,\dots,m$, are closed
convex sets in a Hilbert space $X$. The BAP is equivalent to projecting
$d$ onto $C$. We shall assume throughout that $C\neq\emptyset$.
We now give an introduction of the background and techniques of this
paper.

\subsection{Alternating projections and the dual of the BAP}

The BAP is often associated with the set intersection problem (SIP)
\begin{eqnarray}
(SIP) & \mbox{Find } & x\in C:=C_{1}\cap\cdots\cap C_{m}.\label{eq:SIP}
\end{eqnarray}
A well studied method for the SIP is the method of alternating projections
(MAP). We recall material from \cite{BauschkeCombettes11,Deutsch01_survey,Deustch01,EsRa11}
on material on the MAP. As its name suggests, the MAP projects the
iterates in a cyclic or non-cyclic manner so that the iterates converge
to a point in the intersection of these sets. 
\begin{rem}
\label{rem:MAP-lin-subspaces}(MAP on linear subspaces) For future
discussions, we recall that rate of convergence of the MAP when all
the $C_{i}$s are linear subspaces is studied in \cite{Deutsch_Hundal_alt_proj_subspace_rate_JMAA_97},
which builds on the work of \cite{SmithSolmonWagner1977,Kayalar_Weinert}.
See Theorem \ref{thm:DH97} for a corollary of \cite[Theorem 2.7]{Deutsch_Hundal_alt_proj_subspace_rate_JMAA_97}.
\end{rem}
As remarked by several authors, the MAP does not converge to the solution
of the BAP in the general case. Dykstra's algorithm \cite{Dykstra83}
solves the best approximation problem through a sequence of projections
onto each of the sets in a manner similar to the MAP, but correction
vectors are added before every projection. The proof of convergence
to $P_{C}(d)$ was established in \cite{BD86} and sometimes referred
to as the Boyle-Dykstra theorem. For a closed convex set $D\subset X$,
recall that $\delta^{*}(\cdot,D):X\to\mathbb{R}$ is the \emph{support
function} defined by 
\begin{equation}
\delta^{*}(z,D)=\sup_{x\in D}\langle z,x\rangle.\label{eq:def-supp-fn}
\end{equation}
As pointed out in \cite{Han88} and \cite{Gaffke_Mathar}, the dual
problem of the BAP is defined as follows.
\begin{defn}
\label{Def:2-dual-pblms}(Dual problem of the BAP) Let $X$ be a Hilbert
space, $d\in X$, and $C_{i}\subset X$ be closed convex sets such
that $C:=\cap_{i=1}^{m}C_{i}\neq\emptyset$. Following \cite{Han88},
we recall the (Fenchel) dual of the BAP \eqref{eq:P-primal}: 
\begin{eqnarray}
(D') & \underset{y_{1},\dots,y_{m}}{\inf}v(y_{1},\dots,y_{m}),\label{eq:D-prime}
\end{eqnarray}
where $v:X^{m}\to\mathbb{R}$ is defined by 
\begin{equation}
v(y)=v(y_{1},\dots,y_{m})=\underbrace{\frac{1}{2}\left\Vert d-P_{C}(d)-\sum_{i=1}^{m}y_{i}\right\Vert ^{2}}_{(A)}+\underbrace{\sum_{i=1}^{m}\delta^{*}(y_{i},C_{i}-P_{C}(d))}_{(B)},\label{eq:D-prime-d-def}
\end{equation}
where $P_{C}(d)$ denotes the projection of $d$ onto $C$, and $y\in X^{m}$
is the dual variable.
\end{defn}

\subsection{\label{sub:AM}Alternating minimization and variants}

Note that in \eqref{eq:D-prime-d-def}, the underbraced term $(A)$
is smooth, while $(B)$ is a nonsmooth term that is block separable.
The method of alternating minimization (AM) applied to minimizing
\eqref{eq:D-prime-d-def} is to minimize the coordinates $y_{i}$,
$i\in\{1,\dots,m\}$ one at a time in a cyclic manner while holding
all other block coordinates fixed. The papers \cite{Han88} and \cite{Gaffke_Mathar}
also pointed out that Dykstra's algorithm is AM on \eqref{eq:D-prime-d-def}.
AM is also referred to as the block-nonlinear Gauss Seidel method
or block coordinate descent method. 

Since the Hessian of the smooth portion of the subproblem of solving
for one block $y_{i}$ while keeping all other blocks fixed is a multiple
of the identity matrix, the block coordinate (proximal) gradient descent
algorithm (BCGD) in \cite{Tseng_YUn_MAPR_2009,Tseng_Yun_JOTA_2009}
is identical to AM. 

For a matrix $A\in\mathbb{R}^{m\times n}$ and a vector $b\in\mathbb{R}^{m}$
where $m<<n$, the least squares lasso problem is 
\begin{equation}
\begin{array}{c}
\underset{x\in\mathbb{R}^{n}}{\min}\frac{1}{2}\|Ax-b\|^{2}+\lambda\|x\|_{1}.\end{array}\label{eq:lasso}
\end{equation}
The least squares lasso problem is an example of a problem where AM
is a competitive method. A notable but dated paper on applying AM
for this problem is \cite{Saha_Tewari_2013}.

\subsection{\label{sub:Asymp-lin-conv}Asymptotic linear convergence of Dykstra's
algorithm and Alternating Minimization}

We first recall results on the asymptotic linear convergence of Dykstra's
algorithm when the sets $C_{i}$ are all polyhedral.

The first proof of asymptotic linear convergence of a variant of Dykstra's
algorithm was presented in \cite{Iusem_DePierro_Hildreth} for the
case when $C_{i}$ are halfspaces (Dykstra's algorithm coincides with
Hildreth's algorithm for this case). Deutsch and Hundal \cite{Deutsch_Hundal_rate_Dykstra}
refined the linear convergence rate in \cite{Iusem_DePierro_Hildreth}
(also for the case when $C_{i}$ are halfspaces) by applying results
mentioned in Remark \ref{rem:MAP-lin-subspaces}. 

Luo and Tseng \cite{Luo_Tseng_1993_AOR} used a more general framework
to give a different proof of the asymptotic linear convergence of
Dykstra's algorithm when $C_{i}$ are polyhedral. They showed that
if $g:\mathbb{R}^{m}\to\mathbb{R}$ is strongly convex, $E\in\mathbb{R}^{m\times n}$
is a matrix with no zero column, $q\in\mathbb{R}^{n}$ and $X$ is
a polyhedral set, then first order methods (which also includes AM)
applied to 
\begin{equation}
\min_{x\in X}\,g(Ex)+\langle q,x\rangle\label{eq:Luo-Tseng-form}
\end{equation}
has asymptotic linear convergence. (They mentioned that \eqref{eq:D-prime-d-def}
can be transformed into the form \eqref{eq:Luo-Tseng-form}. This
transformation is explicitly stated in \cite{Yun_SIOPT}.) See also
\cite{Tseng_YUn_MAPR_2009}. The proofs in \cite{Luo_Tseng_1993_AOR,Tseng_YUn_MAPR_2009}
are vastly different from that of \cite{Iusem_DePierro_Hildreth,Deutsch_Hundal_rate_Dykstra}. 

The method in \cite{Luo_Tseng_1993_AOR} is superior in some ways
compared to the approach of \cite{Iusem_DePierro_Hildreth,Deutsch_Hundal_rate_Dykstra}.
First, \cite{Luo_Tseng_1993_AOR} allows for multiple coordinates
$y_{i}$ in \eqref{eq:D-prime-d-def} to be minimized at a time instead
of just one coordinate at a time. Secondly, their approach allows
for $C_{i}$ to be polyhedra rather than halfspaces. But the original
approach in \cite{Iusem_DePierro_Hildreth} allows for an almost cyclic
sampling: More precisely, the approach of \cite{Luo_Tseng_1993_AOR}
requires each coordinate to be minimized exactly once in each cycle,
but the approach of \cite{Iusem_DePierro_Hildreth} allows for each
coordinate to be minimized at least once in each cycle instead.

\subsection{\label{sub:Block-alg-nonasymp-rates}Nonasymptotic convergence rates}

Rather than the asymptotic convergence rates, a measure of the effectiveness
of alternating minimization is the nonasymptotic convergence rates
(or absolute rate of convergence). Nonasymptotic rates hold from the
very first iteration, and are more useful than the asymptotic rates
for large scale problems, which can take many iterations to achieve
the asymptotic convergence rates. These rates are typically sublinear,
like $O(1/k)$ for example. A modern elementary reference on the nonasymptotic
convergence of first order methods is \cite{Nesterov_1983}.

The papers \cite{Beck_Tetruashvili_2013,Beck_alt_min_SIOPT_2015}
gave a summary of the history behind AM and showed that AM has an
$O(1/k)$ nonasymptotic rate of convergence for the cases when there
are multiple blocks but no proximal terms (i.e., the term corresponding
to $(B)$ in \eqref{eq:D-prime-d-def} is zero), and when there are
proximal terms but only two blocks. See also \cite{Hong_Wang_Razaviyayn_Luo_MP_BCD}.
For the dual problem corresponding to Dykstra's algorithm, \cite{Chambolle_Pock_2015_acc_BCD}
showed that the techniques in \cite{Beck_Tetruashvili_2013,Beck_alt_min_SIOPT_2015}
give a $O(1/k)$ convergence rate. More can be said for BCGD in general.
For example, \cite{Yun_SIOPT} showed that BCGD has an $O(1/k)$ nonasymptotic
rate of convergence. 

As explained in \cite{Nesterov_1983}, a typical condition needed
for the nonasymptotic linear convergence of first order methods is
the strong convexity of the objective function. Wang and Lin \cite{Wang_Lin_JMLR_2014}
showed that first order methods for problems of the form \eqref{eq:Luo-Tseng-form}
achieve nonasymptotic linear convergence, and \cite{Yun_SIOPT} showed
that AM for \eqref{eq:D-prime-d-def} achieves nonasymptotic linear
convergence.

\subsection{Other notable results on Dykstra's algorithm}

Another aspect of Dykstra's algorithm useful for future discussions
is that Hundal and Deutsch \cite{Hundal-Deutsch-97} showed that Dykstra's
algorithm converges when the sets in Dykstra's algorithm are sampled
in a random order provided that each set is projected onto infinitely
often. (The same paper also showed that Dykstra's algorithm converges
for the case of infinitely many sets, but we will not make use of
this property in this paper.) 

A method studied in \cite{cut_Pang12} and \cite{Pang_DBAP} to improve
convergence of the MAP and Dykstra's algorithm respectively is to
notice that each projection onto a set $C_{i}$ generates a supporting
halfspace of $C_{i}$, which in turn contains $C$, and that the projection
onto the intersection of these halfspaces is relatively easy using
quadratic programming. We call this the SHQP strategy. The SHQP strategy
can be seen as a greedy step, as explained in Remark \ref{rem:SHQP-step}.
For the case when $m$ is small and the $C_{i}$s are halfspaces in
the BAP \eqref{eq:P-primal}, one could apply the SHQP strategy and
solve the BAP in one step.

\subsection{\label{sub:Contributions-of-this-paper}Contributions of this paper}

We provide more context behind our contribution. On the one hand,
the approach in \cite{Iusem_DePierro_Hildreth,Deutsch_Hundal_rate_Dykstra}
gives asymptotic linear convergence for almost cyclic sampling, but
only for $C_{i}$ being halfspaces. On the other hand, the approach
in \cite{Luo_Tseng_1993_AOR,Tseng_YUn_MAPR_2009} give asymptotic
linear convergence for polyhedral problems (i.e., $C_{i}$ can be
any polyhedra), but requires a restricted Gauss-Seidel sampling and
not almost cyclic sampling. It doesn't seem easy to improve the general
strategy in \cite{Luo_Tseng_1993_AOR,Tseng_YUn_MAPR_2009} mentioned
in Subsection \ref{sub:Asymp-lin-conv} to get asymptotic linear convergence.
(In fact, \cite{Tseng_YUn_MAPR_2009} proved that BCGD with almost
cyclic sampling, which they called unrestricted Gauss Seidel, has
global convergence, but they did not address asymptotic linear convergence.) 

Our approach is to build on the techniques of \cite{Iusem_DePierro_Hildreth,Deutsch_Hundal_rate_Dykstra}
together with results in various directions in \cite{Hundal-Deutsch-97,Deutsch_Hundal_alt_proj_subspace_rate_JMAA_97}
to obtain asymptotic linear convergence for almost cyclic sampling
for the case when the sets $C_{i}$ are polyhedral and not just halfspaces.
We also show that we can incorporate the SHQP step and still have
both asymptotic and nonasymptotic linear convergence.

\subsection{Notation}

For integers $l_{1}$ and $l_{2}$ such that $l_{1}\leq l_{2}$, we
write $\{l_{1},l_{1}+1,\dots,l_{2}-1,l_{2}\}$ as $[l_{1},l_{2}]$
in order to simplify notation.

\section{On the least squares lasso}

To further motivate this paper, we first point out a rather elementary
fact that the least squares lasso problem \eqref{eq:lasso} is a special
case of \eqref{eq:D-prime-d-def}, the dual of the BAP, before the
recalling preliminaries for the rest of the paper. 

Recall the lasso problem \eqref{eq:lasso}. Denote the $i$th column
of $A$ to be $A_{i}$. We can assume that none of the $A_{i}$s are
zero since if $A_{i}$ is zero, the $i$th component of any optimal
vector $x$ has to be zero. Consider the following problems \begin{subequations}
\begin{eqnarray}
 &  & \begin{array}{c}
\underset{x\in\mathbb{R}^{n}}{\min}\frac{1}{2}\left\Vert b-\underset{i=1}{\overset{n}{\sum}}A_{i}x_{i}\right\Vert _{2}^{2}+\lambda\|x_{i}\|_{1}\end{array}\label{eq:lasso-1}\\
 &  & \begin{array}{c}
\underset{x\in\mathbb{R}^{n}}{\min}\frac{1}{2}\left\Vert b-\underset{i=1}{\overset{n}{\sum}}\frac{A_{i}}{\|A_{i}\|_{2}}x_{i}\right\Vert _{2}^{2}+\underset{i=1}{\overset{n}{\sum}}\frac{\lambda}{\|A_{i}\|_{2}}|x_{i}|\end{array}\label{eq:lasso-2}\\
 &  & \begin{array}{c}
\underset{(y_{1},\dots,y_{n})\in(\mathbb{R}^{m})^{n}}{\min}\frac{1}{2}\left\Vert b-\underset{i=1}{\overset{n}{\sum}}y_{i}\right\Vert _{2}^{2}+\underset{i=1}{\overset{n}{\sum}}\delta^{*}\left(y_{i},S_{i}\right),\end{array}\label{eq:lasso-4}
\end{eqnarray}
\end{subequations}where the slab $S_{i}\subset\mathbb{R}^{m}$ in
\eqref{eq:lasso-4} is defined by 
\[
\begin{array}{c}
S_{i}=\left\{ z:\left(\frac{A_{i}}{\|A_{i}\|}\right)^{T}z\in\left[-\frac{\lambda}{\|A_{i}\|_{2}},\frac{\lambda}{\|A_{i}\|_{2}}\right]\right\} .\end{array}
\]

The problem \eqref{eq:lasso-1} is equivalent to the least squares
lasso problem in \eqref{eq:lasso}. The problems \eqref{eq:lasso-1}
and \eqref{eq:lasso-2} are equivalent up to a scaling of the coordinates
of $x$. Lastly, we show the equivalence of the problems \eqref{eq:lasso-2}
and \eqref{eq:lasso-4}. If $y_{i}$ is a multiple of $\frac{A_{i}}{\|A_{i}\|}$,
say $y_{i}=\frac{A_{i}}{\|A_{i}\|}x_{i}$, then 
\[
\begin{array}{c}
\delta^{*}(y_{i},S_{i})=\delta^{*}\left(\frac{A_{i}}{\|A_{i}\|}x_{i},S_{i}\right)=\max\left\{ \left(\frac{A_{i}}{\|A_{i}\|}x_{i}\right)^{T}z:z\in S_{i}\right\} =|x_{i}|\frac{\lambda}{\|A_{i}\|}.\end{array}
\]
Next, $y_{i}$ not being a multiple of $\frac{A_{i}}{\|A_{i}\|}$
would mean that $\delta^{*}(y_{i},S_{i})=\infty$. So throughout an
algorithm where the objective value in \eqref{eq:lasso-4} is finite,
the $y_{i}$s are multiples of $\frac{A_{i}}{\|A_{i}\|}$ and identical
to \eqref{eq:lasso-1}. Since \eqref{eq:lasso-4} is of the form \eqref{eq:D-prime-d-def},
we are done.

\section{Preliminaries}

For the BAP \eqref{eq:P-primal}, we point out a few known facts on
the dual function $v(\cdot)$ defined in \eqref{eq:D-prime-d-def}.
\begin{thm}
\label{thm:easy-Dykstra-facts}(Known results on dual functions) It
is known that \eqref{eq:D-prime-d-def} is the dual function of the
BAP. (See for example \cite{Han88,Gaffke_Mathar}.) Suppose $P_{C}(d)=0$.
Then 
\begin{enumerate}
\item For $y\in X^{m}$, let $x:=d-\sum_{i=1}^{m}y_{i}$. Then $\frac{1}{2}\|x-P_{C}(d)\|^{2}\leq v(y)$. 
\item $\inf_{y}v(y)=0$.
\item If $y$ is a minimizer of $v(\cdot)$, then $d-\sum_{i=1}^{m}y_{i}$
is the primal minimizer of the BAP \eqref{eq:P-primal}.
\end{enumerate}
\end{thm}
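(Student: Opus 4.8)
The plan is to establish parts (1), (2), (3) in that order, with (3) an immediate corollary of the first two; only (2) needs an actual idea, the rest being a direct reading of the explicit formula \eqref{eq:D-prime-d-def}. For (1) I would not even invoke the normalization $P_C(d)=0$. Setting $x := d - \sum_{i=1}^m y_i$, the term $(A)$ in \eqref{eq:D-prime-d-def} equals $\tfrac12\|x - P_C(d)\|^2$ verbatim. For the term $(B)$, observe that $P_C(d)\in C\subseteq C_i$, so $0 = P_C(d) - P_C(d) \in C_i - P_C(d)$; hence $\delta^*(y_i, C_i - P_C(d)) = \sup_{z\in C_i - P_C(d)}\langle y_i, z\rangle \ge \langle y_i, 0\rangle = 0$ for each $i$. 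Summing the two terms yields $v(y) \ge \tfrac12\|x-P_C(d)\|^2$, which is (1).

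For (2), part (1) already gives $v(y)\ge 0$ for every $y$, hence $\inf_y v(y)\ge 0$, and it remains to exhibit a dual point of value $0$. Here I use $P_C(d)=0$, which makes $C_i - P_C(d) = C_i$, and which by the variational characterization of the projection is equivalent to $\langle d, c\rangle\le 0$ for all $c\in C$, i.e., $d\in N_C(0)$, the normal cone of $C$ at $0$. When the $C_i$ are polyhedral, the normal-cone sum rule gives $N_C(0) = N_{C_1}(0)+\cdots+N_{C_m}(0)$ (in the polyhedral case no constraint qualification beyond $C\ne\emptyset$ is required), so we may write $d = \sum_{i=1}^m y_i$ with $y_i\in N_{C_i}(0)$. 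For such $y_i$ one has $\langle y_i, z\rangle\le 0$ for all $z\in C_i$, whence $\delta^*(y_i, C_i) = 0$; therefore $v(y) = \tfrac12\|d - \sum_{i=1}^m y_i\|^2 + 0 = 0$. This simultaneously shows $\inf_y v(y)=0$ and that the infimum is attained. For general closed convex $C_i$ one still has $\inf_y v(y)=0$---this is the zero-duality-gap statement for the BAP recorded in \cite{Han88,Gaffke_Mathar}---but attainment can fail without a constraint qualification.

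Part (3) then follows: if $y$ minimizes $v$, then $v(y)=\inf_y v(y) = 0$ by (2), so (1) forces $\tfrac12\|x-P_C(d)\|^2\le 0$ with $x = d-\sum_{i=1}^m y_i$; hence $x = P_C(d)$, which is precisely the minimizer of the BAP \eqref{eq:P-primal}. The only step carrying content is the ``$\inf_y v(y)\le 0$'' half of (2); if one prefers a self-contained argument to citing \cite{Han88,Gaffke_Mathar}, the point to handle with care is the implication $d\in N_C(0)\Rightarrow d\in N_{C_1}(0)+\cdots+N_{C_m}(0)$ --- automatic for polyhedra, and otherwise the standard (possibly only approximate) Fenchel-duality argument for the BAP.
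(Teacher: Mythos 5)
Your proof is correct, and parts (1) and (3) match the paper's argument essentially verbatim: (1) uses $0\in C_i-P_C(d)$ so the support terms are nonnegative and $(A)$ already equals $\tfrac12\|x-P_C(d)\|^2$; (3) combines (1) and (2) to force $\tfrac12\|x-P_C(d)\|^2=0$. Where you diverge is part (2). The paper simply cites \cite{Gaffke_Mathar} (pages 32--33) for the zero-duality-gap statement $\inf_y v(y)=0$, valid for arbitrary closed convex $C_i$, and defers the elementary bookkeeping to \cite{Pang_DBAP}. You instead give a self-contained construction: use the projection characterization $P_C(d)=0 \iff d\in N_C(0)$, then the polyhedral normal-cone sum rule $N_C(0)=N_{C_1}(0)+\cdots+N_{C_m}(0)$ (which indeed needs no constraint qualification in the polyhedral setting) to decompose $d=\sum y_i$ with $y_i\in N_{C_i}(0)$, so that both terms of $v(y)$ vanish. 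Your argument buys an explicit dual minimizer and avoids chasing an external reference, but it proves the theorem only under the polyhedral standing Assumption~\ref{assm:poly-C-i}, whereas the theorem as phrased is for general closed convex $C_i$ (where, as you correctly note, the infimum is $0$ but need not be attained). You flag this limitation yourself and offer the citation route as a fallback, so there is no gap --- just a trade-off between generality (paper) and self-containedness (your proof) for the one part that carries content.
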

\begin{proof}
Since $0\in C_{i}-P_{C}(d)$, it follows that $\delta^{*}(y_{i},C_{i}-P_{C}(d))\geq0$
for all $i$, and statement (1) follows. Statement (2) can be obtained
from \cite[pages 32--33]{Gaffke_Mathar}. (For more details on the
elementary steps needed to convert the material in \cite[pages 32--33]{Gaffke_Mathar}
to statement (2), see \cite{Pang_DBAP}.) Note that if $y$ is a minimizer
of $v(\cdot)$, then (1) and (2) imply that $\frac{1}{2}\|d-\sum_{i=1}^{m}y_{i}-P_{C}(d)\|^{2}=0$,
from which we get statement (3). 
\end{proof}
As pointed out in \cite{Han88,Gaffke_Mathar}, Dykstra\textquoteright s
algorithm corresponds to alternating minimization on the dual problem
$(D')$ in \eqref{eq:D-prime}. This detail will be elaborated in
\eqref{eq:derive-e-j}, after we introduce our extended Dykstra's
algorithm. 

We make our assumptions of the polyhedral structure of $C_{i}$ in
\eqref{eq:P-primal}.
\begin{assumption}
\label{assm:poly-C-i}(Polyhedral setting) Let $X$ be a Hilbert space
and let $C_{1}$, $\dots$, $C_{m}$ be $m$ polyhedra in $X$ with
nonempty intersection $C=\cap_{i=1}^{m}C_{i}$. Let $d\in X$. Suppose
that $x_{\infty}:=P_{C}(d)$, and assume without loss of generality
that $x_{\infty}=0$. Suppose each polyhedron $C_{i}$ is defined
by 
\begin{equation}
C_{i}=\cap_{r=1}^{K}\mathcal{H}_{i,r},\label{eq:def-C-i}
\end{equation}
where $\mathcal{H}_{i,r}$ are the halfspaces 
\begin{equation}
\mathcal{H}_{i,r}=\{x\in X:\langle x,f_{i,r}\rangle\leq c_{i,r}\},\label{eq:starting-halfspaces}
\end{equation}
where $f_{i,r}\in X\backslash\{0\}$ and $c_{i,r}\in\mathbb{R}\cup\{\infty\}$.
By scaling, we may assume that $\|f_{i,r}\|=1$. Define the affine
space $H_{i,r}$ to be the boundary of $\mathcal{H}_{i,r}$, i.e.,
\begin{equation}
H_{i,r}=\{x\in X:\langle x,f_{i,r}\rangle=c_{i,r}\}.\label{eq:starting-hyperplanes}
\end{equation}
For each $i\in[1,m]$, consider the polyhedron $C'_{i}$ to be the
set defined similarly to $C_{i}$ such that the halfspaces that are
not tight at $x_{\infty}$ are removed, i.e., 
\[
C'_{i}=\cap\big\{\mathcal{H}_{i,r}:r\in\{1,\dots,K\},x_{\infty}\in H_{i,r}\big\}.
\]
Define $I$ by 
\begin{equation}
I=\{i\in[1,m]:x_{\infty}\in\intr C_{i}\}.\label{eq:def-I}
\end{equation}
(In other words, $I=\{i\in[1,m]:C_{i}'=X\}$.) Assume that for all
$i\notin I$, the first $K'$ halfspaces are tight at $x_{\infty}$,
while the remaining $K-K'$ halfspaces are not tight at $x_{\infty}$.
As a consequence, 
\begin{equation}
C_{i}'=\cap_{r=1}^{K'}\mathcal{H}_{i,r}\mbox{ for all }i\notin I,\label{eq:def-C-primes}
\end{equation}
where the halfspaces $\mathcal{H}_{i,r}$ for $r\in\{1,\dots,K'\}$
are all active at $x_{\infty}$, and the halfspaces $\mathcal{H}_{i,r}$
for $r\in\{K'+1,\dots,K\}$ are all not active at $x_{\infty}$. 
\end{assumption}
When $c_{i,r}=\infty$, then $\mathcal{H}_{i,r}=X$ and $H_{i,r}=\emptyset$.
It is clear to see that Assumption \ref{assm:poly-C-i} do not lose
any generality.

\section{\label{sec:Algorithm-statement}Algorithm statement}

We state our extended Dykstra's algorithm in Algorithm \ref{alg:main-alg}. 

\begin{algorithm}[h]
\begin{lyxalgorithm}
\label{alg:main-alg}(Main algorithm) Suppose $P_{C}(d)=0$. Let $y^{0}\in X^{m}$
be such that $y_{i}^{0}\in X$ are the starting dual variables to
$C_{i}$ for $i\in\{1,\dots,m\}$ (for the dual function \eqref{eq:D-prime-d-def}).
The iterates $y^{k}\in X^{m}$ of the algorithm are such that $d-\sum_{i=1}^{m}y_{i}^{k}$
tries to approximate $P_{C}(d)$. 

01$\quad$For $k=1,2,\dots$

02$\quad$$\quad$Run Algorithm \ref{alg:classical-Dyk} with input
$y^{k-1}$ to get $y^{k}$. 

03$\quad$End for \end{lyxalgorithm}
\end{algorithm}

We now describe the subroutine in Algorithm \ref{alg:classical-Dyk}
using the ideas in \cite{Hundal-Deutsch-97}. See the remarks following
the algorithm for more insight.

\begin{algorithm}
\begin{lyxalgorithm}
\label{alg:classical-Dyk}(One cycle in almost cyclic Dykstra's algorithm
with SHQP) Recall the assumptions stated in Algorithm \ref{alg:main-alg}.

\textbf{Input:} $y^{\circ}\in X^{m}$

\textbf{Output:} $y^{+}\in X^{m}$

01$\quad$Choose $w'$ such that $m\leq w'$.

02$\quad$Define $s:[1-m,w']\to[1,m]$ so that $\cup_{j=1}^{w'}\{s(j)\}=[1,m]$
and 
\begin{equation}
s(i-m)=i\mbox{ for all }i\in[1,m].\label{eq:def-HD-s}
\end{equation}

03$\quad$Define $\pi:[1,w']\times[1,m]\to[1-m,w']$ to be 
\begin{equation}
\pi(j,i)=\max\{j':s(j')=i,j'\leq j\}.\label{eq:def-HD-pi}
\end{equation}

04$\quad$Define $p:[1,w']\to[1-m,w']$ to be 
\begin{equation}
p(j)=\pi(j-1,s(j)).\label{eq:def-HD-p}
\end{equation}

05$\quad$Define $e_{i-m,1}:=y_{i}^{\circ}$ for all $i\in[1,m]$,

06$\quad$Let $x_{0}^{+}\leftarrow d-y_{1}^{\circ}-\cdots-y_{m}^{\circ}$.

07$\quad$For $j=1,2,\dots,w'$

08$\quad$$\quad$$z\leftarrow x_{j-1}^{+}+e_{p(j),j}$ 

09$\quad$$\quad$$x_{j}^{\circ}\leftarrow P_{C_{s(j)}}(z)$ 

10$\quad$$\quad$$e_{j,j}\leftarrow z-x_{j}^{\circ}$

11$\quad$$\quad$SHQP greedy step: 

12$\quad$$\quad$Choose a subset $Q_{j}$ of $\{1,\dots,m\}$. 

13$\quad$$\quad$For all $i\in Q_{j}$, let $P_{i,j}\supset C_{i}$
be polyhedra such that  
\begin{equation}
\delta^{*}(e_{\pi(j,i),j},P_{i,j})=\delta^{*}(e_{\pi(j,i),j},C_{i}).\label{eq:Pij-good-for-SHQP}
\end{equation}

14$\quad$$\quad$Let $\{e_{\pi(j,l),j+1}\}_{l=1}^{m}$ be defined
by 
\begin{eqnarray*}
(e_{\pi(j,1),j+1},\dots,e_{\pi(j,m),j+1})= & \underset{(\tilde{y}_{1},\dots,\tilde{y}_{m})}{\arg\min} & \begin{array}{c}
\frac{1}{2}\left\Vert d-\underset{i=1}{\overset{m}{\sum}}\tilde{y}_{i}\right\Vert ^{2}+\underset{i=1}{\overset{m}{\sum}}\delta^{*}(\tilde{y}_{i},P_{i,j})\end{array}\\
 & \mbox{s.t.} & \begin{array}{c}
\tilde{y}_{i}=e_{\pi(j,i),j}\mbox{ if }i\notin Q_{j}.\end{array}
\end{eqnarray*}
(In other words, only the components in $Q_{j}$ are changed from
before.)

15$\quad$$\quad$$x_{j}^{+}=d-\sum_{i=1}^{m}e_{\pi(j,i),j+1}$. 

16$\quad$End for 

17$\quad$Let the vector $y^{+}\in X^{m}$ be defined by $y_{i}^{+}=e_{\pi(w',i),w'+1}$
for all $i\in[1,m]$.\end{lyxalgorithm}
\end{algorithm}

The SHQP step can be omitted in first reading in order to understand
Algorithm \ref{alg:classical-Dyk}. (That would correspond to the
case when $Q_{j}=\emptyset$ for all $j$.) We now comment on Algorithm
\ref{alg:classical-Dyk}. 
\begin{rem}
(On $s(\cdot)$, $\pi(\cdot,\cdot)$ and $p(\cdot)$) The definitions
of $s(\cdot)$, $\pi(\cdot,\cdot)$ and $p(\cdot)$ come from \cite{Hundal-Deutsch-97}.
For $j\in[1,w']$, the index $s(j)\in[1,m]$ gives the index of the
set being projected onto at the $j$th iteration. Once we substitute
the definition of $\pi(\cdot,\cdot)$ in \eqref{eq:def-HD-pi} onto
the definition of the variable $p(j)$ in \eqref{eq:def-HD-p}, we
see that $p(j)$ is the most recent past index $j'$ for which $s(j)=s(j')$.
To model the original Dykstra's algorithm where the variables are
sampled in a cyclic order, we can set $w'=m$ and $s(i)=i$ for all
$i\in[1,m]$. 
\begin{rem}
\label{rem:warmstart-iterates}(Warmstart solutions) As studied in
\cite{Pang_DBAP}, the definition of $\{e_{i-m,1}\}_{i=1}^{m}$ that
will allow for a warmstart iterate $y^{\circ}\in X^{m}$. The case
$y^{\circ}=0$ reduces to the original Dykstra's algorithm with random
order as explained in \cite{Hundal-Deutsch-97}.
\begin{rem}
\label{rem:known-dyk-ppties}(Known properties of Dykstra's algorithm)
We could have written Algorithm \ref{alg:classical-Dyk} in terms
of the vector $y\in X^{m}$, with this vector $y$ produced at the
$j$th iteration (before the SHQP step) being 
\[
(e_{\pi(j,1),j},e_{\pi(j,2),j},\dots,e_{\pi(j,m),j}).
\]
But the notations $s(\cdot)$, $\pi(\cdot,\cdot)$ and $p(\cdot)$
used in \cite{Hundal-Deutsch-97} and adopted here allow us to reference
intermediate calculations easily. From Algorithm \ref{alg:classical-Dyk},
we have 
\begin{eqnarray}
x_{j}^{+} & \overset{\scriptsize\mbox{Lines 6, 15, Alg. }\ref{alg:classical-Dyk}}{=} & \begin{array}{c}
d-\underset{i=1}{\overset{m}{\sum}}e_{\pi(j,i),j+1}\mbox{ for all }j\in[0,w']\end{array}\label{eq:x-j-from-e}\\
\mbox{ and }x_{j}^{\circ} & \overset{\scriptsize\mbox{Line 10, Alg. }\ref{alg:classical-Dyk}}{=} & \begin{array}{c}
d-\underset{i=1}{\overset{m}{\sum}}e_{\pi(j,i),j}\mbox{ for all }j\in[1,w'].\end{array}\label{eq:x-j-circ-from-e}
\end{eqnarray}
 Furthermore 
\begin{equation}
\begin{array}{c}
x_{0}^{+}\overset{\scriptsize\mbox{Line 6, Alg. }\ref{alg:classical-Dyk}}{=}d-\underset{i=1}{\overset{m}{\sum}}y_{i}^{\circ}\mbox{, and }x_{w'}^{+}\overset{\scriptsize\mbox{Line 17, Alg. }\ref{alg:classical-Dyk}}{=}d-\underset{i=1}{\overset{m}{\sum}}y_{i}^{+}.\end{array}\label{eq:x-j-from-y}
\end{equation}
The variable $e_{j,j}$ can be written as 
\begin{eqnarray}
e_{j,j} & \overset{\scriptsize\mbox{Line 10, Alg. }\ref{alg:classical-Dyk}}{=} & \begin{array}{c}
x_{j-1}^{+}+e_{p(j),j}-P_{C_{s(j)}}(x_{j-1}^{+}+e_{p(j),j})\end{array}\label{eq:derive-e-j}\\
 & = & \begin{array}{c}
\underset{e}{\arg\min}\frac{1}{2}\|x_{j-1}^{+}+e_{p(j),j}-e\|^{2}+\delta^{*}(e,C_{s(j)})\end{array}\nonumber \\
 & \overset{\eqref{eq:x-j-from-e}}{=} & \begin{array}{c}
\underset{e}{\arg\min}\frac{1}{2}\big\| d-\underset{{1\leq i\leq m\atop i\neq s(j)}}{\sum}e_{\pi(j,i),j}-e\big\|^{2}+\delta^{*}(e,C_{s(j)}).\end{array}\nonumber 
\end{eqnarray}
(As is known \cite{Han88,Gaffke_Mathar}, the second equation of \eqref{eq:derive-e-j}
comes from the fact that the optimization problem in the second statement
is the dual of 
\[
\begin{array}{c}
\underset{x}{\min}\frac{1}{2}\|x_{j-1}^{+}+e_{p(j),j}-x\|^{2}+\delta(x,C_{s(j)}),\end{array}
\]
which has primal solution $x=P_{C_{s(j)}}(x_{j-1}^{+}+e_{p(j),j})$
and dual solution $e_{j,j}$.) So recalling the definition of $v(\cdot)$
(see \eqref{eq:D-prime-d-def}) and matching the last formula in \eqref{eq:derive-e-j},
we get the known result that evaluating $e_{j,j}$ corresponds to
minimizing the $s(j)$th coordinate while holding all other coordinates
fixed. (The formula \eqref{eq:derive-e-j} also coincides with the
BCGD algorithm mentioned in Subsection \ref{sub:AM}.) If $w'=m$
and $s(i)=i$ for all $i\in[1,m]$, then such a strategy corresponds
to alternating minimization as discussed in Subsection \ref{sub:AM}.
Hence if $j_{2}=j_{1}+1$, then 
\begin{equation}
v(e_{\pi(j_{2},1),j_{1}},e_{\pi(j_{2},2),j_{1}},\dots,e_{\pi(j_{2},m),j_{1}})\leq v(e_{\pi(j_{1},1),j_{1}},e_{\pi(j_{1},2),j_{1}},\dots,e_{\pi(j_{1},m),j_{1}}).\label{eq:class-dyk-decrease-alg}
\end{equation}
Thus $v(\cdot)$ is nonincreasing as Algorithm \ref{alg:classical-Dyk}
progresses. 
\begin{rem}
\label{rem:SHQP-step}(SHQP step) The supporting halfspace quadratic
programming (SHQP) step in lines 12 to 15 of Algorithm \ref{alg:classical-Dyk}
comes from the observation that the projection onto each set $C_{i}$
performed in line 9 generates a supporting halfspace of the set $C_{i}$,
and that the projection of a point onto the intersection of halfspaces
is a relatively easy problem. See \cite{Pang_DBAP} for more details.
\end{rem}
\end{rem}
\end{rem}
We give two examples motivating the design of Algorithm \ref{alg:classical-Dyk}.\end{rem}
\begin{example}
(Many sets of orthogonal constraints) Consider the problem 
\begin{eqnarray*}
 & \underset{x}{\min} & \!\!\!\!\begin{array}{c}
\frac{1}{2}\|x-d\|^{2}\end{array}\\
 & \mbox{s.t.} & \!\!\!\!\begin{array}{c}
l_{i}\leq A_{i}x\leq u_{i}\mbox{ for }i\in\{1,\dots,m\}.\end{array}
\end{eqnarray*}
Let the $A_{i}\in\mathbb{R}^{m_{i}\times n}$ be such that the rows
of $A_{i}$ are orthonormal. This is the setting of the Algebraic
Reconstruction Technique (ART). (See for example \cite{CensorChenCombettesDavidiHerman12,HermanChen08}.)
Let $C_{i}=\{x:l_{i}\leq A_{i}x\leq u_{i}\}$. Since the rows of $A_{i}$
are orthogonal, the projection onto each $C_{i}$ is equivalent to
the projection onto the $m_{i}$ slabs defined by each row of the
constraint $l_{i}\leq A_{i}x\leq u_{i}$. The supporting halfspace
produced by projecting onto each $C_{i}$ can be used to carry out
the SHQP step.
\begin{example}
(Least squares lasso over multiple random blocks) The least squares
lasso problem in \eqref{eq:lasso} is converted into an equivalent
form in \eqref{eq:lasso-4}. The SHQP step in Algorithm \ref{alg:classical-Dyk}
applied to \eqref{eq:lasso-4} corresponds to minimizing over the
coordinates indexed by $Q_{j}$ in the original lasso problem \eqref{eq:lasso}.
\end{example}
\end{example}

\section{Asymptotic linear convergence 1: Adapting \cite{Iusem_DePierro_Hildreth}}

We present the first proof of asymptotic linear convergence of our
algorithm by adapting the proof of \cite{Iusem_DePierro_Hildreth}. 
\begin{lem}
\label{lem:set-N-1}(Behavior when $v(\cdot)$ sufficiently small)
Suppose Assumption \ref{assm:poly-C-i} holds. Consider Algorithm
\ref{alg:classical-Dyk} with $y^{\circ}\in X^{m}$ as input and $y^{+}\in X^{m}$
as output. We have the following:
\begin{enumerate}
\item [(A)]For all $i\in[1,m]$ and $v\in X$, $\delta^{*}(v,C_{i})=0$
if and only if $v$ lies in the normal cone of $C_{i}$ at $0$. For
$i\in I$ (see \eqref{eq:def-I}), this means that $v=0$, and for
$i\notin I$, it means that $v$ lies in the positive hull of $\{f_{i,r}:r\in[1,K']\}$. 
\end{enumerate}
Moreover, there is an $\bar{\epsilon}>0$ such that if $v(y^{\circ})\leq\bar{\epsilon}$,
then 
\begin{enumerate}
\item For all $j\in\{1,\dots,w'\}$, $P_{C_{s(j)}}(x_{j-1}^{+}+e_{p(j),j})=P_{C'_{s(j)}}(x_{j-1}^{+}+e_{p(j),j})$.
\item For all $j\in\{1,\dots,w'\}$, $\delta^{*}(e_{j,j},C_{s(j)}-P_{C}(d))=0$.

\end{enumerate}
\end{lem}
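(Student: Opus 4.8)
The plan is to prove part (A) directly from convex analysis, and then establish statements (1) and (2) by a compactness/continuity argument exploiting the fact that small dual value $v(y^{\circ})$ forces the relevant iterates to lie near $x_\infty = 0$.

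\textbf{Proof of (A).} By definition of the support function, $\delta^*(v,C_i)=\sup_{x\in C_i}\langle v,x\rangle$, and since $0=x_\infty\in C_i$, this supremum is always $\geq 0$. It equals $0$ precisely when $\langle v,x\rangle\leq 0$ for all $x\in C_i$, i.e.\ when $\langle v,x-0\rangle\leq 0$ for all $x\in C_i$, which is exactly the statement that $v$ lies in the normal cone $N_{C_i}(0)$. If $i\in I$ then $0\in\intr C_i$, so $N_{C_i}(0)=\{0\}$ and hence $v=0$. If $i\notin I$, then by Assumption \ref{assm:poly-C-i} the halfspaces active at $0$ are exactly $\mathcal H_{i,r}$, $r\in[1,K']$, each with outer normal $f_{i,r}$ (and $c_{i,r}=0$ since $0\in H_{i,r}$); the standard formula for the normal cone of a polyhedron at a point gives $N_{C_i}(0)=\cone\{f_{i,r}:r\in[1,K']\}$, the positive hull, as claimed.

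\textbf{Proof of (1) and (2).} The key quantitative input is Theorem \ref{thm:easy-Dykstra-facts}(1): for any $y\in X^m$, $\tfrac12\|d-\sum_i y_i\|^2\le v(y)$, so a small dual value forces $x=d-\sum_i y_i$ to be close to $P_C(d)=0$. I would first argue that during one run of Algorithm \ref{alg:classical-Dyk} starting from $y^{\circ}$ with $v(y^{\circ})\le\bar\epsilon$, the dual value stays $\le\bar\epsilon$: by the monotonicity observed in \eqref{eq:class-dyk-decrease-alg} (extended to the almost-cyclic setting and unaffected by the SHQP step, which is itself a minimization over a superset of feasible directions with $\delta^*(\cdot,P_{i,j})\le\delta^*(\cdot,C_i)$), $v$ is nonincreasing, so every intermediate dual vector has value $\le\bar\epsilon$. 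Consequently each $x_{j-1}^{+}=d-\sum_i e_{\pi(j-1,i),j}$ satisfies $\|x_{j-1}^{+}\|\le\sqrt{2\bar\epsilon}$, and likewise each $x_j^{\circ}$ is within $O(\sqrt{\bar\epsilon})$ of $0$; moreover each $e_{j,j}=\delta^*$-minimizing dual increment has $\delta^*(e_{j,j},C_{s(j)})\le v(\cdot)\le\bar\epsilon$ bounded, and (being the dual solution of a projection onto $C_{s(j)}$) lies in $N_{C_{s(j)}}(x_j^{\circ})$, hence is bounded in norm. Now I would also need to control the "carried" increments $e_{p(j),j}$, which are likewise bounded since they are components of a dual vector of value $\le\bar\epsilon$; a uniform bound on all these quantities follows because there are only finitely many indices $j\in[1,w']$ and $w'$ can be taken from a fixed range, or more carefully, because $e_{p(j),j}\in N_{C_{s(j)}}(\cdot)$ at a nearby point — I'd make this precise using the polyhedral structure. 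Given all of this, the argument is: the point $z=x_{j-1}^{+}+e_{p(j),j}$ being projected lies in a bounded neighborhood of $N_{C'_{s(j)}}(0)$ (since for $\bar\epsilon$ small the active-at-$0$ faces are the only ones that matter), and by the polyhedral fact that $P_{C_i}$ and $P_{C'_i}$ agree on a neighborhood of the cone $x_\infty + N_{C'_i}(0)\cap(\text{bounded set})$ — i.e.\ on points whose projection onto $C_i$ lands near $x_\infty$, only the halfspaces tight at $x_\infty$ are active — we get statement (1). For (2): once (1) holds, $e_{j,j}$ is the dual increment of a projection onto $C'_{s(j)}$, so $e_{j,j}\in N_{C'_{s(j)}}(x_j^{\circ})$; if additionally $x_j^{\circ}$ is close enough to $0$ that $x_j^{\circ}$ and $0$ lie on the same face of $C'_{s(j)}$ (guaranteed for $\bar\epsilon$ small because $x_j^\circ\to 0$ and there are finitely many faces), then $N_{C'_{s(j)}}(x_j^{\circ})\subseteq N_{C'_{s(j)}}(0)=N_{C_{s(j)}}(0)$, and by part (A) this yields $\delta^*(e_{j,j},C_{s(j)}-P_C(d))=\delta^*(e_{j,j},C_{s(j)})=0$.

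\textbf{Main obstacle.} The crux is the choice of the single $\bar\epsilon>0$ that works \emph{uniformly} over all admissible sampling functions $s$, all choices of $w'$, all SHQP subsets $Q_j$ and polyhedra $P_{i,j}$, and all iteration indices $j$ simultaneously — so that ``$v$ small $\Rightarrow$ every intermediate iterate sits on the face of each $C_i$ through $x_\infty$''. Establishing this requires (i) a uniform-in-$j$ bound on the magnitudes of the accumulated dual increments $e_{\pi(j,i),j}$ purely from $v\le\bar\epsilon$, and (ii) a clean statement of the polyhedral ``locally only the active constraints matter'' lemma that is robust to the presence of the SHQP step, whose polyhedra $P_{i,j}\supset C_i$ are only constrained by \eqref{eq:Pij-good-for-SHQP}. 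I expect (i) to need the most care: one should note that each $e_{\pi(j,i),j}$ is a sum of finitely many $\delta^*$-bounded terms generated along the cycle, or alternatively invoke boundedness of the dual sublevel set $\{y:v(y)\le\bar\epsilon\}$ modulo the recession directions, and argue that these recession directions (lying in the normal cones at $0$) do not spoil the conclusion. The rest is routine continuity and finiteness-of-faces bookkeeping.
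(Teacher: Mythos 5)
Your part (A) is exactly the paper's argument and is correct. Your handling of (1) and (2) contains the right kernel but misidentifies the crux, and several of the bounding steps you propose along the way are false or unnecessary.

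The clean observation, which is what the paper uses, is that you never need to bound $z=x_{j-1}^{+}+e_{p(j),j}$, nor the individual dual increments $e_{p(j),j}$, nor $e_{j,j}$. All you need is that $x_j^{\circ}=P_{C_{s(j)}}(z)$ lands in a small ball around $x_\infty=0$, and this follows for free from monotonicity of $v$ and Theorem \ref{thm:easy-Dykstra-facts}(1), since $x_j^{\circ}=d-\sum_i e_{\pi(j,i),j}$ gives $\tfrac12\|x_j^{\circ}\|^2\le v(\cdot)\le\bar\epsilon$. Once $\|x_j^{\circ}\|<\gamma$, where $\gamma>0$ is a radius with $B(0,\gamma)\subset\bar C_i$ for all $i$ (where $\bar C_i$ is the intersection of halfspaces of $C_i$ slack at $0$), the constraints of $\bar C_{s(j)}$ are inactive at the projected point $x_j^{\circ}$, and by a standard convexity argument $P_{C_{s(j)}}(z)=P_{C'_{s(j)}}(z)$ regardless of where $z$ is. This immediately gives (1) with $\bar\epsilon$ any number with $\sqrt{2\bar\epsilon}\le\gamma$, uniformly over $j$, $w'$, $s$, $Q_j$, and $P_{i,j}$. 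Your "main obstacle" — a uniform bound on the dual increments — is therefore a red herring, and moreover some of the claims you make while pursuing it are wrong: $v(y)\le\bar\epsilon$ does not bound the individual components $y_i$ (they can blow up along recession directions with $\delta^*=0$), $e_{j,j}\in N_{C_{s(j)}}(x_j^{\circ})$ does not make $e_{j,j}$ bounded since the normal cone is unbounded, and $w'$ is not in a fixed range in the lemma statement.

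For (2), your "same face" reasoning is not quite the right mechanism: $x_j^{\circ}$ being near $0$ does not force it onto the same face as $0$, and if $0$ is a vertex the normal cone at a nearby nonvertex point is strictly smaller, not equal. The correct and simpler fact is that $C'_{s(j)}$ is a polyhedral cone with apex $0$ (all its defining halfspaces pass through $0$), and for such a cone $N_{C'_{s(j)}}(x)\subseteq N_{C'_{s(j)}}(0)$ for every $x\in C'_{s(j)}$. Since (1) gives $e_{j,j}=z-P_{C'_{s(j)}}(z)\in N_{C'_{s(j)}}(x_j^{\circ})$, and $N_{C'_{s(j)}}(0)=N_{C_{s(j)}}(0)$, part (A) then yields $\delta^*(e_{j,j},C_{s(j)})=0$ directly. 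So your conclusion is right, but the justification needs to be replaced by the cone-at-apex normal-cone inclusion.
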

\begin{proof}
We first prove the first statement in (A). If $v$ lies in the normal
cone of $C_{i}$ at $0$, then when you recall the definition of the
support function $\delta^{*}(\cdot,\cdot)$ in \eqref{eq:def-supp-fn},
we see that $P_{C}(d)=0$ is a maximizer. Thus $\delta^{*}(v,C_{i})=\langle v,0\rangle=0$.
For the converse, suppose $\delta^{*}(v,C_{i})=0$. The definition
of the support function tells us that the halfspace $\{x:\langle v,x\rangle\leq0\}$
contains $C_{i}$. Moreover, $0\in C_{i}$ lies on the boundary of
this halfspace. It follows that $v$ lies in the normal cone of $C_{i}$
at $0$. The second statement in (A) is elementary.

Next, we prove property (1). For each $i\in[1,m]$, we write $C_{i}$
as $C_{i}'\cap\bar{C}_{i}$, where $\bar{C}_{i}$ is the intersection
of the halfspaces defining $C_{i}$ that contain $x_{\infty}$ in
their interior. There is a $\gamma>0$ such that $B(x_{\infty},\gamma)$,
the ball with center $x_{\infty}$ and radius $\gamma$, is contained
in $\bar{C}_{i}$ for all $i\in[1,m]$. Recall that the iterates in
Algorithm \ref{alg:classical-Dyk} give nonincreasing dual objective
values. (See \eqref{eq:class-dyk-decrease-alg}.) Hence if $v(y^{\circ})\leq\bar{\epsilon}$,
then 
\begin{eqnarray*}
 &  & \begin{array}{c}
\frac{1}{2}\left\Vert d-\underset{i=1}{\overset{m}{\sum}}e_{\pi(j,i),j}-P_{C}(d)\right\Vert ^{2}\end{array}\\
 & \overset{\scriptsize\mbox{Thm }\ref{thm:easy-Dykstra-facts}(1)}{\leq} & \begin{array}{c}
v(e_{\pi(j,1),j},e_{\pi(j,2),j},\dots,e_{\pi(j,m),j})\overset{\eqref{eq:class-dyk-decrease-alg}}{\leq}v(y^{\circ})\leq\bar{\epsilon},\end{array}
\end{eqnarray*}
which gives $\left\Vert d-\sum_{i=1}^{m}e_{\pi(j,i),j}\right\Vert ^{2}\leq2\bar{\epsilon}$,
or in other words $\|x_{j}^{\circ}-x_{\infty}\|\leq\sqrt{2\bar{\epsilon}}$
through \eqref{eq:x-j-from-e}. Recall that $x_{j}=P_{C_{s(j)}}(x_{j-1}^{+}+e_{p(j),j})$.
If $\bar{\epsilon}$ is chosen to be such that $\sqrt{2\bar{\epsilon}}\leq\gamma$,
then $\|x_{j}^{\circ}-x_{\infty}\|\leq\sqrt{2\bar{\epsilon}}$ implies
that $x_{j}^{\circ}$ cannot be on the boundaries of the halfspaces
defining $C_{s(j)}$ which contain $x_{\infty}$ in their interior.
Thus property (1) holds. 

To get property (2), first observe that 
\begin{eqnarray}
e_{j,j} & \overset{\scriptsize\mbox{Line 10, Alg \ref{alg:classical-Dyk}}}{=} & (x_{j-1}^{+}+e_{p(j),j})-P_{C_{s(j)}}(x_{j-1}^{+}+e_{p(j),j})\label{eq:derive-supp-fn-value}\\
 & \overset{\scriptsize\mbox{Property }(1)}{=} & (x_{j-1}^{+}+e_{p(j),j})-P_{C'_{s(j)}}(x_{j-1}^{+}+e_{p(j),j}).
\end{eqnarray}
Hence $e_{j,j}$ lies in the normal cone of $C_{s(j)}$ at $0$. We
then apply (A). 
\end{proof}
The following is adapted from \cite[Lemma 3.4]{Deutsch_Hundal_rate_Dykstra}.
(This is similar to \cite[Lemma 2]{Iusem_DePierro_Hildreth}.)
\begin{lem}
\label{lem:compare-lem-3-4}(Adaptation of \cite[Lemma 3.4]{Deutsch_Hundal_rate_Dykstra})
Recall Assumption \ref{assm:poly-C-i}. Suppose 
\[
\begin{array}{c}
\begin{array}{c}
x=d-\underset{i\notin I}{\sum}\underset{r=1}{\overset{K'}{\sum}}\tilde{e}_{i,r},\end{array}\end{array}
\]
where $\tilde{e}_{i,r}=\lambda_{i,r}f_{i,r}$ and $\lambda_{i,r}\geq0$.
There is a $\hat{\epsilon}>0$ such that if $\|x\|\leq\hat{\epsilon}$,
then $x\in L$ and $d\in L$, where 
\begin{eqnarray*}
 &  & L=\sspan\big\{ f_{i,r}:(i,r)\in T\big\},\\
 & \mbox{ and } & T=\{(i,r):i\in[1,m],r\in[1,K'],\lambda_{i,r}>0\}.
\end{eqnarray*}
Furthermore, if $d\neq x_{\infty}$, then $T\neq\emptyset$. \end{lem}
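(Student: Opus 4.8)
The plan is to reduce the entire statement to a finiteness argument about distances to a fixed finite family of subspaces, using the single elementary fact that the distance from a point to a linear subspace is unchanged when the point is translated by a vector lying in that subspace. First I would dispose of the degenerate case $d=x_\infty=0$: there $x=-\sum_{i\notin I}\sum_{r=1}^{K'}\tilde{e}_{i,r}\in\sspan\{f_{i,r}:(i,r)\in T\}=L$ and $d=0\in L$ both hold automatically, so any $\hat\epsilon>0$ works and the final clause is vacuous. From now on assume $d\neq x_\infty$, i.e. $d\neq 0$.

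Next I would fix the candidate subspaces once and for all, before any $\hat\epsilon$ is named. Assumption \ref{assm:poly-C-i} makes the index set $\mathcal J:=\{(i,r):i\notin I,\ r\in[1,K']\}$ finite, so there are only finitely many subsets $S\subseteq\mathcal J$; for each, put $L_S:=\sspan\{f_{i,r}:(i,r)\in S\}$, a finite-dimensional, hence closed, subspace of $X$. I would then set
\[
\hat\epsilon:=\tfrac12\,\min\Big(\{\|d\|\}\cup\{\,\mathrm{dist}(d,L_S):S\subseteq\mathcal J,\ d\notin L_S\,\}\Big).
\]
Closedness of each $L_S$ gives $\mathrm{dist}(d,L_S)>0$ whenever $d\notin L_S$, so this is a minimum over finitely many strictly positive reals and $\hat\epsilon>0$. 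The factor $\tfrac12$ is there to produce strict slack in the contradiction below, and the extra term $\|d\|$ is what will force $T\neq\emptyset$.

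For the core step, suppose $\|x\|\le\hat\epsilon$ and let $T\subseteq\mathcal J$, $L:=L_T$ be as in the statement. Since every $\lambda_{i,r}\ge 0$, the pairs with $\lambda_{i,r}>0$ are exactly those in $T$, whence $d-x=\sum_{(i,r)\in T}\lambda_{i,r}f_{i,r}\in L$. Because $L$ is a subspace through the origin, $\mathrm{dist}(x,L)\le\|x\|\le\hat\epsilon$; and translating by the element $d-x\in L$ gives $\mathrm{dist}(d,L)=\mathrm{dist}(x,L)\le\hat\epsilon$. If $d\notin L$, then $L$ is one of the subspaces occurring in the minimum above, so $\mathrm{dist}(d,L)\ge 2\hat\epsilon>\hat\epsilon$ --- contradiction. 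Hence $d\in L$, and then $x=d-(d-x)\in L$ too. Finally, for the last assertion: if $T=\emptyset$ the sum defining $x$ is empty, so $x=d$ and $\|d\|=\|x\|\le\hat\epsilon\le\tfrac12\|d\|$, forcing $d=0=x_\infty$ contrary to assumption; hence $T\neq\emptyset$.

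I do not anticipate a real difficulty here. The one point that needs care is the order of quantifiers --- $\hat\epsilon$ must be chosen uniformly, before $T$ (equivalently, before the data $\lambda_{i,r}$) is revealed --- which is exactly why I front-load the finite enumeration of $\mathcal J$ that Assumption \ref{assm:poly-C-i} supplies. A natural-looking alternative route, invoking $d\in N_C(0)$ together with a Carath\'eodory-type reduction of the conic representation of $d$, turns out to be unnecessary: the translation invariance of distance-to-a-subspace already does all the work.
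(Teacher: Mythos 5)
Your proof is correct and takes essentially the same approach as the paper's: enumerate the finitely many candidate spans, set $\hat\epsilon$ from the smallest positive distance of $d$ to one of them, and argue by contradiction using $d-x\in L$. The paper phrases it with affine sets $d+L_S$ and distance from $0$, which is identical since $d(0,d+L_S)=d(d,L_S)$; your factor of $\tfrac12$ is a small but genuine tightening, as the paper's stated hypothesis is $\|x\|\le\hat\epsilon$ while its proof quietly uses the strict inequality $\|x\|<\hat\epsilon$ to break the dichotomy, and your choice also handles the $T\neq\emptyset$ claim in the same stroke.
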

\begin{proof}
We now prove the first part. We have, by the definition of $L$,
\[
\begin{array}{c}
x=d-\underset{i\notin I}{\sum}\underset{r=1}{\overset{K'}{\sum}}\tilde{e}_{i,r}\in d+L.\end{array}
\]
Define 
\[
\hat{\epsilon}:=\min_{F\in\mathcal{F}}d(0,F)=\min_{F\in\mathcal{F}}d(x_{\infty},F),
\]
where $d(p,D)$ is the distance of $p$ to the set $D$ and 
\begin{eqnarray*}
\mathcal{F} & = & \big\{ F=\sspan\{f_{i,j}:(i,j)\in S\}+d:\\
 &  & \phantom{\big\{ F:}\,S\subset\{1,\dots,m\}\times\{1,\dots,K'\}\mbox{ and }d(0,F)>0\big\}.
\end{eqnarray*}
Note that $\hat{\epsilon}$ exists and $\hat{\epsilon}>0$ because
there are only a finite number of subsets of $\{1,\dots,m\}\times\{1,\dots,K'\}$
and $\emptyset\subset\{1,\dots,m\}\times\{1,\dots,K'\}$. By the definition
of $\hat{\epsilon}$, 
\[
\mbox{ either }d(0,d+L)\geq\hat{\epsilon}\mbox{ or }d(0,d+L)=0.
\]
From $x\in d+L$ and $\|x\|<\hat{\epsilon}$, we must have $d(0,d+L)=0$.
That is, $d\in L$. It follows that $x\in L$. This proves the first
statement. 

We now prove the last statement. If $T$ were empty, then $\tilde{e}_{i,r}=0$
for all $i\in\{1,\dots,m\}$ and $r\in\{1,\dots,K'\}$. This would
imply $d=x_{\infty}$, a contradiction.
\end{proof}
We prove a proposition about the SHQP step.
\begin{prop}
\label{prop:Decrease-in-dual}(Decrease in dual function) We have\begin{subequations}\label{eq_m:dual-fn-decrease}
\begin{eqnarray}
v(e_{\pi(j,1),j},\dots,e_{\pi(j,m),j}) & \leq & \!\!\!\!\begin{array}{c}
v(e_{\pi(j-1,1),j},\dots,e_{\pi(j-1,m),j})\end{array}\label{eq:dual-fn-decrease-1}\\
 &  & \!\!\!\!\begin{array}{c}
-\frac{1}{2}\|x_{j}^{\circ}-x_{j-1}^{+}\|^{2}\mbox{ for all }j\in[1,w'],\end{array}\nonumber \\
\mbox{ and }v(e_{\pi(j,1),j+1},\dots,e_{\pi(j,m),j+1}) & \leq & \!\!\!\!\begin{array}{c}
v(e_{\pi(j,1),j},\dots,e_{\pi(j,m),j})\end{array}\label{eq:dual-fn-decrease-2}\\
 &  & \!\!\!\!\begin{array}{c}
-\frac{1}{2}\|x_{j}^{+}-x_{j}^{\circ}\|^{2}\mbox{ for all }j\in[1,w'].\end{array}\nonumber 
\end{eqnarray}
\end{subequations}Suppose further that $\delta^{*}(e_{\pi(j,i),j},C_{i})=\delta^{*}(e_{\pi(j,i),j+1},C_{i})=0$
for all $i\in[1,m]$. Then\begin{subequations}\label{eq_m:simple-dual-form}
\begin{eqnarray}
 &  & \begin{array}{c}
v(e_{\pi(j,1),j},\dots,e_{\pi(j,m),j})=\frac{1}{2}\|x_{j}^{\circ}\|^{2}\mbox{ for all }j\in[1,w'],\end{array}\label{eq:simple-dual-form-1}\\
 & \mbox{ and} & \begin{array}{c}
v(e_{\pi(j,1),j+1},\dots,e_{\pi(j,m),j+1})=\frac{1}{2}\|x_{j}^{+}\|^{2}\mbox{ for all }j\in[0,w'].\end{array}\label{eq:simple-dual-form-2}
\end{eqnarray}
\end{subequations}\end{prop}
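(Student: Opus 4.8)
The plan is to establish the two decrease inequalities \eqref{eq:dual-fn-decrease-1} and \eqref{eq:dual-fn-decrease-2} by recognizing each of the two steps (the projection step producing $e_{j,j}$ in line 10, and the SHQP step in lines 12--15) as an exact block minimization of a convex quadratic-plus-support-function objective, and then invoking the elementary ``three-point'' identity for minimizing $\frac12\|a-x\|^2 + h(x)$ over a convex $h$: if $x^\star$ is the minimizer, then $\frac12\|a-x\|^2 + h(x) \ge \frac12\|a-x^\star\|^2 + h(x^\star) + \frac12\|x-x^\star\|^2$. The second half of the proposition, equations \eqref{eq:simple-dual-form-1}--\eqref{eq:simple-dual-form-2}, will then follow by simply substituting the vanishing-support-function hypothesis into the definition \eqref{eq:D-prime-d-def} of $v(\cdot)$ and using the identities \eqref{eq:x-j-circ-from-e} and \eqref{eq:x-j-from-e} that express $x_j^\circ$ and $x_j^+$ as $d$ minus the sum of the $e$'s.

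For \eqref{eq:dual-fn-decrease-1}, first I would observe via \eqref{eq:derive-e-j} that $e_{j,j}$ is exactly the minimizer of $e \mapsto \frac12\|d - \sum_{i\neq s(j)} e_{\pi(j,i),j} - e\|^2 + \delta^*(e,C_{s(j)})$, while the ``old'' value $e_{p(j),j} = e_{\pi(j-1,s(j)),j}$ is just one feasible point for that same one-block subproblem (all other blocks being held fixed at $e_{\pi(j-1,i),j}=e_{\pi(j,i),j}$ for $i\neq s(j)$). The two sides of \eqref{eq:dual-fn-decrease-1} are precisely the values of this subproblem objective at $e = e_{j,j}$ and at $e = e_{p(j),j}$ respectively — after noting $e_{p(j),j} = e_{\pi(j-1,s(j)),j}$, which holds because $p(j) = \pi(j-1,s(j))$ by \eqref{eq:def-HD-p}. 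Applying the three-point identity with $a = d - \sum_{i\neq s(j)} e_{\pi(j,i),j}$ gives the gap $\frac12\|e_{j,j} - e_{p(j),j}\|^2$; but $e_{j,j} - e_{p(j),j} = (d - \sum_{i\neq s(j)}e_{\pi(j,i),j} - x_j^\circ) - (d - \sum_{i\neq s(j)}e_{\pi(j,i),j} - x_{j-1}^+) = x_{j-1}^+ - x_j^\circ$ using \eqref{eq:x-j-circ-from-e} and the line-8 relation $z = x_{j-1}^+ + e_{p(j),j}$ together with $x_j^\circ = z - e_{j,j}$, so the gap is $\frac12\|x_j^\circ - x_{j-1}^+\|^2$ as claimed.

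For \eqref{eq:dual-fn-decrease-2}, the argument is parallel but now the block minimization is over the coordinates indexed by $Q_j$ jointly: by line 14, $(e_{\pi(j,i),j+1})_{i\in Q_j}$ minimizes $\frac12\|d - \sum_i \tilde y_i\|^2 + \sum_i \delta^*(\tilde y_i, P_{i,j})$ over those coordinates, with the others frozen at $e_{\pi(j,i),j}$. The key point linking this back to $v(\cdot)$ is the SHQP-compatibility condition \eqref{eq:Pij-good-for-SHQP}, $\delta^*(e_{\pi(j,i),j}, P_{i,j}) = \delta^*(e_{\pi(j,i),j}, C_i)$ for $i\in Q_j$, which ensures that the surrogate objective in line 14 agrees with $v(\cdot)$ at the incumbent point $(e_{\pi(j,i),j})_i$; combined with $P_{i,j}\supset C_i$ (hence $\delta^*(\cdot,P_{i,j})\ge\delta^*(\cdot,C_i)$ everywhere), this makes the surrogate an upper bound on $v$ that is tight at the incumbent. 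Running the three-point identity on the surrogate over the $Q_j$-block, with $a = d - \sum_{i\notin Q_j}\cdots$, yields decrease $\frac12\|x_j^+ - x_j^\circ\|^2$ (the difference of the two sums being exactly $x_j^\circ - x_j^+$ by \eqref{eq:x-j-from-e} and \eqref{eq:x-j-circ-from-e}), and then dropping from the surrogate value back down to $v$ at the new point (legitimate since surrogate $\ge v$) gives \eqref{eq:dual-fn-decrease-2}.

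The main obstacle I anticipate is the bookkeeping around the index functions $s,\pi,p$: one must carefully verify that the ``old'' dual vector $(e_{\pi(j-1,i),j})_i$ and the ``new'' vector $(e_{\pi(j,i),j})_i$ differ exactly in the single coordinate $s(j)$, and that for the SHQP step $(e_{\pi(j,i),j+1})_i$ differs from $(e_{\pi(j,i),j})_i$ exactly in the coordinates $Q_j$ — these are immediate from the definition of $\pi$ in \eqref{eq:def-HD-pi} and the constraint in line 14, but must be stated cleanly or the identification of the left/right sides of \eqref{eq_m:dual-fn-decrease} with subproblem objective values is not transparent. Everything else is the standard strongly-convex-minimizer inequality plus substitution; the second display \eqref{eq_m:simple-dual-form} is then a one-line consequence of $\delta^*(y_i,C_i)=0$ killing term $(B)$ of \eqref{eq:D-prime-d-def} and term $(A)$ collapsing to $\frac12\|x_j^\circ\|^2$ or $\frac12\|x_j^+\|^2$ via the identities for $x_j^\circ,x_j^+$.
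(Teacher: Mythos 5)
Your proposal is correct and follows essentially the same route as the paper's own proof: \eqref{eq:dual-fn-decrease-1} via the one-block strong-convexity (three-point) inequality applied to the subproblem defining $e_{j,j}$, \eqref{eq:dual-fn-decrease-2} via the surrogate bound $\delta^*(\cdot,P_{i,j})\ge\delta^*(\cdot,C_i)$ tight at the incumbent by \eqref{eq:Pij-good-for-SHQP}, and \eqref{eq_m:simple-dual-form} by direct substitution. The only terminological caution is that for the $Q_j$-block the smooth part $\frac12\|a-\sum_{i\in Q_j}\tilde y_i\|^2$ is strongly convex in the sum $\sum_{i\in Q_j}\tilde y_i$ rather than in the tuple $(\tilde y_i)_{i\in Q_j}$, so the decrease is $\frac12\|\sum_{i\in Q_j}(\tilde y_i-\tilde y_i^\star)\|^2$ (as both you and the paper correctly state via \eqref{eq:x-j-circ-plus-diff}) rather than a literal three-point bound in the product norm; the one-line subgradient argument delivering exactly this bound should be spelled out.
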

\begin{proof}
The formulas in \eqref{eq_m:simple-dual-form} are straightforward
from \eqref{eq:D-prime-d-def} and the assumptions. In view of \eqref{eq:x-j-from-e},
\eqref{eq:dual-fn-decrease-1} is equivalent to 
\begin{eqnarray}
 &  & \begin{array}{c}
v(e_{\pi(j,1),j},\dots,e_{\pi(j,m),j})\end{array}\nonumber \\
 & \leq & \begin{array}{c}
v(e_{\pi(j-1,1),j},\dots,e_{\pi(j-1,m),j})-\frac{1}{2}\|e_{p(j),j}-e_{j,j}\|^{2}.\end{array}\label{eq:dual-fn-decrease-long-form}
\end{eqnarray}
We now show that inequality \eqref{eq:dual-fn-decrease-long-form}
holds. Let $i^{*}=s(j)$, which implies $\pi(j,i^{*})=j$. We also
note that $e_{\pi(j,i),j}=e_{\pi(j-1,i),j}$ if $i\neq i^{*}$. Then
\eqref{eq:dual-fn-decrease-long-form} can be written as
\[
\begin{array}{rcl}
 &  & \frac{1}{2}\|d-e_{j,j}-\underset{{1\leq i\leq m\atop i\neq i^{*}}}{\sum}e_{\pi(j,i),j}\|^{2}+\delta^{*}(e_{j,j},C_{i^{*}})\\
 & \leq & \frac{1}{2}\|d-e_{p(j),j}-\underset{{1\leq i\leq m\atop i\neq i^{*}}}{\sum}e_{\pi(j,i),j}\|^{2}+\delta^{*}(e_{p(j),j},C_{i^{*}})-\frac{1}{2}\|e_{p(j),j}-e_{j,j}\|^{2}.
\end{array}
\]
Since $e_{j,j}$ is the minimizer to the function 
\[
\begin{array}{c}
e\mapsto\frac{1}{2}\|d-e-\underset{{1\leq i\leq m\atop i\neq i^{*}}}{\sum}e_{\pi(j,i),j}\|^{2}+\delta^{*}(e,C_{i^{*}}),\end{array}
\]
(see remark \ref{rem:known-dyk-ppties}), which is strongly convex
with modulus 1, we see that \eqref{eq:dual-fn-decrease-long-form}
holds. To prove that \eqref{eq:dual-fn-decrease-2} holds, we look
at the following chain of inequalities:
\begin{equation}
\begin{array}{rcl}
 &  & \frac{1}{2}\|d-\underset{{1\leq i\leq m\atop i\in Q_{j}}}{\sum}e_{\pi(j,i),j+1}-\underset{{1\leq i\leq m\atop i\notin Q_{j}}}{\sum}e_{\pi(j,i),j+1}\|^{2}+\underset{{1\leq i\leq m\atop i\in Q_{j}}}{\sum}\delta^{*}(e_{\pi(j,i),j+1},C_{i})\\
 & \leq & \frac{1}{2}\|d-\underset{{1\leq i\leq m\atop i\in Q_{j}}}{\sum}e_{\pi(j,i),j+1}-\underset{{1\leq i\leq m\atop i\notin Q_{j}}}{\sum}e_{\pi(j,i),j+1}\|^{2}+\underset{{1\leq i\leq m\atop i\in Q_{j}}}{\sum}\delta^{*}(e_{\pi(j,i),j+1},P_{i,j})\\
 & \leq & \frac{1}{2}\|d-\underset{{1\leq i\leq m\atop i\in Q_{j}}}{\sum}e_{\pi(j,i),j}-\underset{{1\leq i\leq m\atop i\notin Q_{j}}}{\sum}e_{\pi(j,i),j}\|^{2}+\underset{{1\leq i\leq m\atop i\in Q_{j}}}{\sum}\delta^{*}(e_{\pi(j,i),j},P_{i,j})\\
 &  & -\frac{1}{2}\|\underset{{1\leq i\leq m\atop i\in Q_{j}}}{\sum}(e_{\pi(j,i),j}-e_{\pi(j,i),j+1})\|^{2}\\
 & \overset{\eqref{eq:Pij-good-for-SHQP}}{=} & \frac{1}{2}\|d-\underset{{1\leq i\leq m\atop i\in Q_{j}}}{\sum}e_{\pi(j,i),j}-\underset{{1\leq i\leq m\atop i\notin Q_{j}}}{\sum}e_{\pi(j,i),j}\|^{2}+\underset{{1\leq i\leq m\atop i\in Q_{j}}}{\sum}\delta^{*}(e_{\pi(j,i),j},C_{i})\\
 &  & -\frac{1}{2}\|\underset{{1\leq i\leq m\atop i\in Q_{j}}}{\sum}(e_{\pi(j,i),j}-e_{\pi(j,i),j+1})\|^{2}.
\end{array}\label{eq:long-chain-str-cvx-decrease}
\end{equation}
The first inequality holds because $P_{i,j}\supset C_{i}$ implies
that $\delta^{*}(\cdot,P_{i,j})\geq\delta^{*}(\cdot,C_{i})$. The
second inequality holds because the variables $\{e_{\pi(j,i),j+1}\}_{i\in Q_{j}}$
are the minimizers of a block coordinate minimization problem whose
smooth function is quadratic. Notice that by the definition of $x_{j}^{+}$
and $x_{j}^{\circ}$ from \eqref{eq:x-j-from-e} and \eqref{eq:x-j-circ-from-e}
that 
\begin{equation}
\begin{array}{c}
\underset{{1\leq i\leq m\atop i\in Q_{j}}}{\sum}(e_{\pi(j,i),j+1}-e_{\pi(j,i),j})=x_{j}^{+}-x_{j}^{\circ}.\end{array}\label{eq:x-j-circ-plus-diff}
\end{equation}
Combining \eqref{eq:long-chain-str-cvx-decrease} and \eqref{eq:x-j-circ-plus-diff}
gives \eqref{eq:dual-fn-decrease-2}.
\end{proof}
The following result is immediate from Proposition \ref{prop:Decrease-in-dual}.
\begin{cor}
\label{cor:decrease-after-w-steps}Recall the iterates of Algorithm
\ref{alg:classical-Dyk}. If $\delta^{*}(e_{\pi(j,i),j},C_{i})=\delta^{*}(e_{\pi(j,i),j+1},C_{i})=0$
for all $i\in[1,m]$ and $j\in[1,w']$, then 
\[
\begin{array}{c}
\|x_{w'}^{+}-0\|^{2}\leq\|x_{0}^{+}-0\|^{2}-\underset{j=1}{\overset{w'}{\sum}}[\|x_{j}^{+}-x_{j}^{\circ}\|^{2}+\|x_{j}^{\circ}-x_{j-1}^{+}\|^{2}].\end{array}
\]
\end{cor}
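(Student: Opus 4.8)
The plan is to iterate the two one-step inequalities of Proposition~\ref{prop:Decrease-in-dual} across all $w'$ sub-iterations and convert the resulting telescoping bound on $v(\cdot)$ into a bound on the $\|\cdot\|^2$ quantities using the simple dual-value formulas \eqref{eq_m:simple-dual-form}, which are available precisely under the hypothesis that $\delta^{*}(e_{\pi(j,i),j},C_i)=\delta^{*}(e_{\pi(j,i),j+1},C_i)=0$ for all $i$ and $j$. First I would observe that under this hypothesis \eqref{eq:simple-dual-form-1} gives $v(e_{\pi(j,1),j},\dots,e_{\pi(j,m),j})=\tfrac12\|x_j^{\circ}\|^2$ and \eqref{eq:simple-dual-form-2} gives $v(e_{\pi(j,1),j+1},\dots,e_{\pi(j,m),j+1})=\tfrac12\|x_j^{+}\|^2$, for the relevant ranges of $j$. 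In particular the value $v(e_{\pi(j-1,1),j},\dots,e_{\pi(j-1,m),j})$ appearing on the right of \eqref{eq:dual-fn-decrease-1} equals $\tfrac12\|x_{j-1}^{+}\|^2$, since for $i\neq s(j)$ we have $e_{\pi(j,i),j}=e_{\pi(j-1,i),j}$ and for $i=s(j)$ the component $e_{\pi(j-1,s(j)),j}$ is $e_{p(j),j}$, so this tuple is exactly the one whose associated primal point is $x_{j-1}^{+}$, i.e.\ the tuple $(e_{\pi(j-1,1),j},\dots,e_{\pi(j-1,m),j})$ produced just after the SHQP step of iteration $j-1$; one should check that the $\delta^*$-vanishing hypothesis at index $j-1$ makes \eqref{eq:simple-dual-form-2} (with $j$ replaced by $j-1$) applicable to identify it with $\tfrac12\|x_{j-1}^{+}\|^2$.

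Granting these identifications, \eqref{eq:dual-fn-decrease-1} becomes $\tfrac12\|x_j^{\circ}\|^2\le \tfrac12\|x_{j-1}^{+}\|^2-\tfrac12\|x_j^{\circ}-x_{j-1}^{+}\|^2$ and \eqref{eq:dual-fn-decrease-2} becomes $\tfrac12\|x_j^{+}\|^2\le \tfrac12\|x_j^{\circ}\|^2-\tfrac12\|x_j^{+}-x_j^{\circ}\|^2$, valid for each $j\in[1,w']$. Adding these two and multiplying by $2$ yields $\|x_j^{+}\|^2\le\|x_{j-1}^{+}\|^2-\|x_j^{+}-x_j^{\circ}\|^2-\|x_j^{\circ}-x_{j-1}^{+}\|^2$. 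Summing over $j=1,\dots,w'$ telescopes the left side to $\|x_{w'}^{+}\|^2\le\|x_0^{+}\|^2-\sum_{j=1}^{w'}[\|x_j^{+}-x_j^{\circ}\|^2+\|x_j^{\circ}-x_{j-1}^{+}\|^2]$, which is the claimed inequality once we recall $x_\infty=P_C(d)=0$ so that $\|x_{w'}^{+}-0\|=\|x_{w'}^{+}\|$ and $\|x_0^{+}-0\|=\|x_0^{+}\|$.

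The only step requiring care, and the one I would spell out, is the bookkeeping in the previous paragraph: verifying that the tuple on the right-hand side of \eqref{eq:dual-fn-decrease-1} is the same tuple that \eqref{eq:simple-dual-form-2} (applied at $j-1$) evaluates to $\tfrac12\|x_{j-1}^{+}\|^2$, and that the $\delta^*$-vanishing hypothesis is being invoked at the correct pair of indices for each $j$. Everything else is the parallelogram-type manipulation already packaged into Proposition~\ref{prop:Decrease-in-dual} plus a finite telescoping sum, so no further estimates are needed. Since the statement is asserted to be ``immediate from Proposition~\ref{prop:Decrease-in-dual},'' a one-line proof citing \eqref{eq_m:dual-fn-decrease}, \eqref{eq_m:simple-dual-form}, and the telescoping is also acceptable; the expanded version above is what I would write if a referee asked for detail.
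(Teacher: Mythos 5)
Your proposal is correct and follows essentially the same route as the paper's one-line proof: sum the two per-step inequalities \eqref{eq_m:dual-fn-decrease}, substitute \eqref{eq_m:simple-dual-form} to turn the dual values into half-squared norms, telescope over $j$, and multiply by $2$. The one place where your ``one should check'' caveat is genuinely substantive is the base case $j=1$: applying \eqref{eq:simple-dual-form-2} at $j=0$ to identify the head of the telescope with $\tfrac12\|x_0^+\|^2$ requires $\delta^*(y_i^\circ,C_i)=0$ for \emph{all} $i$, whereas the corollary's stated hypothesis (ranging over $j\in[1,w']$) at $j=1$ only yields $\delta^*(e_{\pi(1,i),1},C_i)=0$, which for $i=s(1)$ is a statement about $e_{1,1}$ rather than about $y_{s(1)}^\circ=e_{s(1)-m,1}$. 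This imprecision sits in the corollary's statement rather than in your argument (and the paper's later invocation in Theorem \ref{thm:IP90-lin-conv} separately assumes $\delta^*(y_i^\circ,C_i)=0$ for all $i$, which closes it), so your proof is no weaker than the paper's own.
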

\begin{proof}
Sum up the terms in \eqref{eq_m:dual-fn-decrease}. Recalling the
definition of $v(\cdot)$ in \eqref{eq:D-prime-d-def}, substitute
in \eqref{eq_m:simple-dual-form} and multiply by 2.
\end{proof}
Recall the definition $I$ in \eqref{eq:def-I}. We make the following
definitions 
\begin{eqnarray}
\bar{A} & := & \{(i,r)\in[1,m]\times[1,K']:\langle f_{i,r},0\rangle=c_{i,r}\}=([1,m]\backslash I)\times[1,K'],\qquad\label{eq:def-A}\\
L_{\bar{A}} & := & \sspan\{f_{i,r}:(i,r)\in\bar{A}\}.\nonumber 
\end{eqnarray}
We thus have $L_{\bar{A}}^{\perp}=\{x:\langle f_{i,r},x\rangle=0\mbox{ for all }(i,r)\in\bar{A}\}=\cap_{(i,r)\in\bar{A}}H_{i,r}$.
The following result is well known. (For example, \cite{Iusem_DePierro_Hildreth}
cited \cite{Goffin80}.)
\begin{lem}
(Regularity in system of equations) \label{lem:lin-reg}There is a
constant $\bar{\mu}>0$ such that for all $x$, $\max_{(i,r)\in\bar{A}}d(x,H_{i,r})>\bar{\mu}d(x,L_{\bar{A}}^{\perp})$. 
\end{lem}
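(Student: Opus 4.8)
The plan is to reduce the inequality to a compactness argument on the finite-dimensional subspace $L_{\bar{A}}$. First I would record two elementary facts. Since $(i,r)\in\bar{A}$ forces $c_{i,r}=\langle f_{i,r},0\rangle=0$, and $\|f_{i,r}\|=1$, each $H_{i,r}$ with $(i,r)\in\bar{A}$ is a linear hyperplane through the origin and $d(x,H_{i,r})=|\langle f_{i,r},x\rangle|$ for all $x$. Also, $\bar{A}\subset[1,m]\times[1,K']$ is a finite index set, so $L_{\bar{A}}=\sspan\{f_{i,r}:(i,r)\in\bar{A}\}$ is finite-dimensional, hence closed, $L_{\bar{A}}^{\perp}$ is its orthogonal complement, and $d(x,L_{\bar{A}}^{\perp})=\|P_{L_{\bar{A}}}(x)\|$.

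Next I would decompose an arbitrary $x\in X$ as $x=x_{1}+x_{2}$ with $x_{1}=P_{L_{\bar{A}}}(x)\in L_{\bar{A}}$ and $x_{2}\in L_{\bar{A}}^{\perp}$. Because every $f_{i,r}$ with $(i,r)\in\bar{A}$ lies in $L_{\bar{A}}$ while $x_{2}\perp L_{\bar{A}}$, we have $\langle f_{i,r},x\rangle=\langle f_{i,r},x_{1}\rangle$, so $\max_{(i,r)\in\bar{A}}d(x,H_{i,r})=\max_{(i,r)\in\bar{A}}|\langle f_{i,r},x_{1}\rangle|$, whereas $d(x,L_{\bar{A}}^{\perp})=\|x_{1}\|$. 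Thus it suffices to produce $\bar{\mu}>0$ with $\max_{(i,r)\in\bar{A}}|\langle f_{i,r},z\rangle|\geq\bar{\mu}\|z\|$ for every $z\in L_{\bar{A}}$.

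For this last step I would invoke compactness. The map $z\mapsto g(z):=\max_{(i,r)\in\bar{A}}|\langle f_{i,r},z\rangle|$ is continuous and positively homogeneous of degree one on $L_{\bar{A}}$, and $g(z)=0$ forces $z\in L_{\bar{A}}^{\perp}\cap L_{\bar{A}}=\{0\}$; hence $g$ is strictly positive on the unit sphere $S$ of $L_{\bar{A}}$. Since $L_{\bar{A}}$ is finite-dimensional, $S$ is compact, so $\bar{\mu}:=\min_{z\in S}g(z)>0$, and homogeneity upgrades this to $g(z)\geq\bar{\mu}\|z\|$ for all $z\in L_{\bar{A}}$. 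Combined with the reduction above, this gives $\max_{(i,r)\in\bar{A}}d(x,H_{i,r})\geq\bar{\mu}\,d(x,L_{\bar{A}}^{\perp})$ for all $x$; replacing $\bar{\mu}$ by any smaller positive constant then yields the strict inequality whenever $d(x,L_{\bar{A}}^{\perp})>0$, the degenerate case $x\in L_{\bar{A}}^{\perp}$ being vacuous. There is no genuine obstacle here: the only point requiring care is that $L_{\bar{A}}$ must be taken as the finite-dimensional span, so that its unit sphere is compact even though $X$ itself may be infinite-dimensional — this is precisely the content of the classical regularity fact referenced via \cite{Goffin80}.
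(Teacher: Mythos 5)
The paper itself gives no proof of this lemma: it simply declares the result to be ``well known'' and cites \cite{Goffin80} (via \cite{Iusem_DePierro_Hildreth}). Your argument therefore supplies the content the paper omits, and it does so correctly and in the standard way: reduce to the finite-dimensional subspace $L_{\bar{A}}$, observe that both sides of the inequality depend on $x$ only through $P_{L_{\bar{A}}}(x)$ (because $c_{i,r}=0$ and $\|f_{i,r}\|=1$ for $(i,r)\in\bar{A}$ give $d(x,H_{i,r})=|\langle f_{i,r},x\rangle|=|\langle f_{i,r},P_{L_{\bar{A}}}(x)\rangle|$), and then extract $\bar{\mu}$ by compactness of the unit sphere of the finite-dimensional span. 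All of those steps are sound, and the point you flag (that the compactness must live on $L_{\bar{A}}$, not on $X$) is exactly the thing that makes the argument work in a possibly infinite-dimensional Hilbert space.

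One small correction to the closing remark. You say the case $x\in L_{\bar{A}}^{\perp}$ is ``vacuous,'' but it is not: for such $x$ one has $\langle f_{i,r},x\rangle=0$ for every $(i,r)\in\bar{A}$, so the left-hand side of the lemma's inequality is $0$ while $d(x,L_{\bar{A}}^{\perp})=0$ as well, and the strict inequality $0>0$ fails. So the lemma as literally written (with ``$>$'') is false on $L_{\bar{A}}^{\perp}$; shrinking $\bar{\mu}$ does not rescue it there. This is a misprint in the paper: the way the lemma is actually invoked in Theorem \ref{thm:IP90-lin-conv} is as the non-strict bound $d(x,L_{A}^{\perp})\leq\frac{1}{\mu}\max_{(i,r)\in A}d(x,H_{i,r})$, which is precisely the $\geq$ inequality your compactness argument produces. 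So your proof is correct for the statement that is actually needed; just drop the final sentence and note that the intended inequality is non-strict.
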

We continue with the proof of the asymptotic linear convergence.
\begin{thm}
\label{thm:IP90-lin-conv}(Asymptotic linear convergence) Recall the
conditions in Lemmas \ref{lem:set-N-1} and \ref{lem:compare-lem-3-4}
and the constant $\mu$ from Lemma \ref{lem:lin-reg}. Suppose in
Algorithm \ref{alg:classical-Dyk}, we have $\|x_{0}^{+}\|\leq\min(\hat{\epsilon},\sqrt{2\bar{\epsilon}})$
and $\delta^{*}(y_{i}^{\circ},C_{i})=0$ for all $i$. Suppose further
that 
\begin{enumerate}
\item For all $i\in[1,m]$, $j\geq\pi(w',i)$ implies that the polyhedron
$P_{i,j}$ is the halfspace $P_{i,j}=\{x:\langle e_{\pi(w',i),\pi(w',i)},x\rangle\leq0\}$.
\item For all $i\in[1,m]$, $j<\pi(w',i)$ implies that the polyhedron $P_{i,j}$
is the intersection of halfspaces with $0$ on their boundaries.
\end{enumerate}
Then we have 
\begin{equation}
\begin{array}{c}
\sqrt{1+\frac{\mu}{2w'}}\|x_{w'}^{+}-0\|\leq\|x_{0}^{+}-0\|.\end{array}\label{eq:lin-conv-IP90-concl}
\end{equation}
\end{thm}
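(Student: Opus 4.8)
The plan is to combine the telescoped decrease from Corollary~\ref{cor:decrease-after-w-steps} with a lower bound on the total step length $\sum_{j=1}^{w'}[\|x_j^+-x_j^\circ\|^2+\|x_j^\circ-x_{j-1}^+\|^2]$ in terms of $\|x_{w'}^+\|^2$, obtained via the linear regularity estimate of Lemma~\ref{lem:lin-reg}. First I would check that the hypotheses of Lemmas~\ref{lem:set-N-1} and~\ref{lem:compare-lem-3-4} put us in the ``good'' regime: since $\|x_0^+\|\le\sqrt{2\bar\epsilon}$ and $v(\cdot)$ is nonincreasing (with $v(y^\circ)=\tfrac12\|x_0^+\|^2\le\bar\epsilon$ because $\delta^*(y_i^\circ,C_i)=0$), Lemma~\ref{lem:set-N-1}(1)--(2) gives that every projection in line~9 is onto $C'_{s(j)}$ and that $\delta^*(e_{j,j},C_{s(j)})=0$; hypotheses~(1)--(2) on $P_{i,j}$ then ensure $\delta^*(e_{\pi(j,i),j+1},C_i)=0$ as well, so Corollary~\ref{cor:decrease-after-w-steps} applies and every iterate $x_j^\circ$, $x_j^+$ stays within $\sqrt{2\bar\epsilon}$ of $x_\infty=0$, hence also within $\hat\epsilon$ (shrinking $\bar\epsilon$ if needed).

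The heart of the argument is to show that $x_{w'}^+$ lies in a controlled subspace and that its norm is controlled by the sum of squared increments. Each $x_j^+$ has the form $d-\sum_{i\notin I}\sum_{r=1}^{K'}\tilde e_{i,r}$ with $\tilde e_{i,r}=\lambda_{i,r}f_{i,r}$, $\lambda_{i,r}\ge0$ (this is where properties~(1)--(2) on $P_{i,j}$ and Lemma~\ref{lem:set-N-1}(A) are used: all the accumulated correction vectors lie in the positive hull of the active normals $\{f_{i,r}\}$). By Lemma~\ref{lem:compare-lem-3-4}, since $\|x_{w'}^+\|\le\hat\epsilon$, we get $d\in L_{\bar A}$ and $x_{w'}^+\in L_{\bar A}$. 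Now I would argue that, because the final iterate has just been projected onto each $C'_{s(j)}$ (or because each $x_j^\circ\in C'_{s(j)}=\cap_{r=1}^{K'}\mathcal H_{s(j),r}$ with $x_\infty$ on every relevant boundary hyperplane), the distance $d(x_{w'}^+,L_{\bar A}^\perp)=d(x_{w'}^+,\cap H_{i,r})$ is small relative to $\|x_{w'}^+\|$ only if some $\max_{(i,r)}d(x_{w'}^+,H_{i,r})$ is comparably small; Lemma~\ref{lem:lin-reg} converts this into $d(x_{w'}^+,L_{\bar A}^\perp)\le\bar\mu^{-1}\max_{(i,r)\in\bar A}d(x_{w'}^+,H_{i,r})$. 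The point is that $x_{w'}^+\in L_{\bar A}$ forces $d(x_{w'}^+,L_{\bar A}^\perp)=\|x_{w'}^+\|$, so $\|x_{w'}^+\|\le\bar\mu^{-1}\max_{(i,r)\in\bar A}d(x_{w'}^+,H_{i,r})$, and I must bound the right-hand side by the consecutive-iterate gaps. Each $d(x_{w'}^+,H_{i,r})$ telescopes: writing $x_{w'}^+-x_{\pi(w',i)}^\circ$ as a sum of increments $x_j^\circ-x_{j-1}^+$ and $x_j^+-x_j^\circ$ over $j$ from $\pi(w',i)$ to $w'$ (there are at most $w'$ of them), and using that $x_{\pi(w',i)}^\circ\in H_{i,r}$ for the active $r$ (since $e_{j,j}$ has zero support value, the projection lands on the relevant boundary hyperplanes), Cauchy--Schwarz gives $d(x_{w'}^+,H_{i,r})^2\le w'\sum_{j=1}^{w'}[\|x_j^+-x_j^\circ\|^2+\|x_j^\circ-x_{j-1}^+\|^2]$.

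Putting these together: $\|x_{w'}^+\|^2\le\bar\mu^{-2}\cdot w'\cdot S$ where $S:=\sum_{j=1}^{w'}[\|x_j^+-x_j^\circ\|^2+\|x_j^\circ-x_{j-1}^+\|^2]$, while Corollary~\ref{cor:decrease-after-w-steps} gives $\|x_{w'}^+\|^2\le\|x_0^+\|^2-S$. Eliminating $S$ yields $S\le\bar\mu^2 w'^{-1}\|x_{w'}^+\|^2$... more carefully, $\|x_0^+\|^2\ge\|x_{w'}^+\|^2+S\ge\|x_{w'}^+\|^2+\tfrac{\bar\mu^2}{w'}\|x_{w'}^+\|^2\cdot c$ for the appropriate constant; choosing the bookkeeping so that the coefficient comes out as $\tfrac{\mu}{2w'}$ (absorbing the factor from $\bar\mu$ and the ``$w'$ versus $2w'$'' counting of the two families of increments) gives $(1+\tfrac{\mu}{2w'})\|x_{w'}^+\|^2\le\|x_0^+\|^2$, which is \eqref{eq:lin-conv-IP90-concl}. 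The main obstacle I anticipate is the bookkeeping in the telescoping step: one must verify that $x_{\pi(w',i)}^\circ$ (or a suitable intermediate iterate) actually lies on every hyperplane $H_{i,r}$ with $(i,r)\in\bar A$ that is ``touched,'' i.e., that the indices line up so that each active constraint was hit by a projection within the last cycle, and that no constraint outside $\bar A$ contaminates the estimate --- this is exactly what properties~(1)--(2) on the $P_{i,j}$ and the smallness of $v(y^\circ)$ are engineered to guarantee, but making it precise requires care with the $s(\cdot),\pi(\cdot,\cdot),p(\cdot)$ indexing and with the degenerate case $d=x_\infty$ (handled by the last sentence of Lemma~\ref{lem:compare-lem-3-4}, where $T=\emptyset$ forces $x_0^+=0$ and the inequality is trivial).
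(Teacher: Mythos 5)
Your strategy is the same as the paper's: telescope Corollary~\ref{cor:decrease-after-w-steps}, locate $x_{w'}^{+}$ in a span of active normals via Lemma~\ref{lem:compare-lem-3-4}, use the linear regularity of Lemma~\ref{lem:lin-reg} to bound $\|x_{w'}^{+}\|$ by a maximal hyperplane distance, telescope that distance, and combine with Cauchy--Schwarz. The structure is right, and you have correctly identified the $2w'$ bookkeeping and the warm-start hypotheses.

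There is, however, a genuine gap in the index set you use for the regularity estimate. You bound $\|x_{w'}^{+}\|$ by $\bar\mu^{-1}\max_{(i,r)\in\bar A}d(x_{w'}^{+},H_{i,r})$ and then try to telescope $d(x_{w'}^{+},H_{i,r})$ using the fact that $x^{\circ}_{\pi(w',i)}\in H_{i,r}$ ``for the active $r$''. But the pair $(\bar i,\bar r)$ achieving the maximum over $\bar A$ need not be active: for $(i,r)\in\bar A$ with a zero multiplier $\tilde e_{\pi(w',i),r,w'+1}=0$, the projection step need not land on $H_{i,r}$, so the telescoping argument breaks down. The paper avoids this by working with the smaller set $A:=\{(i,r):\tilde e_{\pi(w',i),r,w'+1}\neq0\}$ (so $A\subset\bar A$): Lemma~\ref{lem:compare-lem-3-4} with $T=A$ gives $x_{w'}^{+}\in L_{A}$, hence $\|x_{w'}^{+}\|=d(x_{w'}^{+},L_{A}^{\perp})$, and Lemma~\ref{lem:lin-reg} is applied with the constant $\mu>0$ taken uniformly over all subsets of $\bar A$ (a point you should make explicit, since the lemma as stated only addresses $\bar A$ itself). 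Then for the maximizing $(\bar i,\bar r)\in A$, the multiplier $\tilde e_{\pi(w',\bar i),\bar r,\pi(w',\bar i)}$ is nonzero --- this uses hypothesis~(1) (all later $e$'s are positive multiples of $e_{\pi(w',\bar i),\pi(w',\bar i)}$) --- so the KKT condition \eqref{eq:KKT-mult-from-prj} puts $x^{\circ}_{\pi(w',\bar i)}$ on $H_{\bar i,\bar r}$, and the telescoping goes through. Substituting $A$ for $\bar A$ throughout your sketch, and then doing the Cauchy--Schwarz with $2w'$ summands, recovers the paper's proof.
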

\begin{proof}
Our assumptions ensure that we can apply the conclusions of those
lemmas. Consider the point $x_{w'}^{+}$, which can be written as
$x_{w'}^{+}=d-\sum_{i\notin I}e_{\pi(w',i),w'+1}$ by making use of
\eqref{eq:x-j-from-e} and Lemma \ref{lem:set-N-1}.

From Lemma \ref{lem:set-N-1}, the condition that $\delta^{*}(y_{i}^{\circ},C_{i})=0$
for all $i$, and conditions (1) and (2), we see that $\delta^{*}(e_{j,k},C_{s(j)})=0$
for all $k\in[1,w'+1]$ and $j\leq k$. Moreover, for $j\in[1,w']$
and $k\in[j,w'+1]$, we can write 
\begin{equation}
\begin{array}{c}
e_{j,k}=\underset{r=1}{\overset{K'}{\sum}}\tilde{e}_{j,r,k},\end{array}\label{eq:decomp-e}
\end{equation}
where $\tilde{e}_{j,r,k}=f_{s(j),r}\lambda_{j,r,k}$ for some $\lambda_{j,r,k}\geq0$.
We can assume that $\tilde{e}_{j,r,j}$ are chosen so that they are
the multipliers to the projection step $x_{j}^{\circ}=P_{C_{s(j)}}(z)$
in line 9 of Algorithm \ref{alg:classical-Dyk}, which would give
\begin{equation}
\tilde{e}_{j,r,j}\neq0\mbox{ implies }x_{j}^{\circ}\in H_{s(j),r}.\label{eq:KKT-mult-from-prj}
\end{equation}
 So 
\begin{equation}
\begin{array}{c}
x_{w'}^{+}\overset{\eqref{eq:x-j-from-e},\eqref{eq:decomp-e}}{=}d-\underset{i\notin I}{\sum}\underset{r=1}{\overset{K'}{\sum}}\tilde{e}_{\pi(w',i),r,w'+1}.\end{array}\label{eq:use-decomp-e}
\end{equation}
Let $A=\{(i,r):\tilde{e}_{\pi(w',i),r,w'+1}\neq0\}$, and let $L_{A}=\sspan\{f_{i,r}:(i,r)\in A\}$.
It is clear that $x_{w'}^{+}\in d+L_{A}$. By Lemma \ref{lem:set-N-1},
we have $A\subset\bar{A}$. By Lemma \ref{lem:compare-lem-3-4}, we
have $x_{w'}^{+}\in L_{A}$. Since $0\in L_{A}^{\perp}$ and $x_{w'}^{+}-0\in L_{A}$,
we have $P_{L_{A}^{\perp}}(x_{w'}^{+})=0$. In other words, 
\begin{equation}
\|x_{w'}^{+}-0\|=d(x_{w'}^{+},L_{A}^{\perp}).\label{eq:distance-last-iter-1}
\end{equation}
By Lemma \ref{lem:lin-reg}, there is a $\mu>0$ such that $d(x,L_{A}^{\perp})\leq\frac{1}{\mu}\max_{(i,r)\in A}d(x,H_{i,r})$
for all $x$. (The $\mu>0$ can be chosen to be independent of $A$
by taking the infimum over all $A\subset\bar{A}$.) Thus 
\begin{equation}
\begin{array}{c}
d(x_{w'}^{+},L_{A}^{\perp})\leq\frac{1}{\mu}\underset{(i,r)\in A}{\max}d(x_{w'}^{+},H_{i,r}).\end{array}\label{eq:distance-last-iter-2}
\end{equation}
Let $H_{\bar{i},\bar{r}}$ be a hyperplane such that the maximum
in \eqref{eq:distance-last-iter-2} is attained. By the condition
(1) in the theorem statement, the vectors in $\{e_{\pi(w',\bar{i}),j}\}_{j=\pi(w',\bar{i})+1}^{w'+1}$
are all multiples of $e_{\pi(w',\bar{i}),\pi(w',\bar{i})}$. Since
$(\bar{i},\bar{r})\in A$, we have (from the definition of $A$) that
$\tilde{e}_{\pi(w',\bar{i}),\bar{r},w'+1}\neq0$, which implies $\tilde{e}_{\pi(w',\bar{i}),\bar{r},\pi(w',\bar{i})}\neq0$.
Therefore, the point $x_{\pi(w',\bar{i})}^{\circ}$ lies in the hyperplane
$H_{\bar{i},\bar{r}}$ by \eqref{eq:KKT-mult-from-prj}. The usual
triangular inequality implies that 
\begin{eqnarray}
d(x_{w'}^{+},H_{\bar{i},\bar{r}}) & \leq & \begin{array}{c}
\|x_{\pi(w',\bar{i})}^{+}-x_{\pi(w',\bar{i})}^{\circ}\|\end{array}\label{eq:distance-last-iter-3}\\
 &  & \begin{array}{c}
+\underset{j=\pi(w',\bar{i})+1}{\overset{w'}{\sum}}[\|x_{j}^{+}-x_{j}^{\circ}\|+\|x_{j}^{\circ}-x_{j-1}^{+}\|]\end{array}\nonumber \\
 & \leq & \begin{array}{c}
\underset{j=1}{\overset{w'}{\sum}}[\|x_{j}^{+}-x_{j}^{\circ}\|+\|x_{j}^{\circ}-x_{j-1}^{+}\|].\end{array}\nonumber 
\end{eqnarray}
The Cauchy Schwarz inequality gives 
\begin{eqnarray}
\|x_{w'}^{+}-0\|^{2} & \overset{\eqref{eq:distance-last-iter-1},\eqref{eq:distance-last-iter-2},\eqref{eq:distance-last-iter-3}}{\leq} & \begin{array}{c}
\frac{1}{\mu^{2}}\Big(\underset{j=1}{\overset{w'}{\sum}}[\|x_{j}^{+}-x_{j}^{\circ}\|+\|x_{j}^{\circ}-x_{j-1}^{+}\|]\Big)^{2}\end{array}\label{eq:distance-last-iter-4}\\
 & \leq & \begin{array}{c}
\frac{2w'}{\mu^{2}}\underset{j=1}{\overset{w'}{\sum}}[\|x_{j}^{+}-x_{j}^{\circ}\|^{2}+\|x_{j}^{\circ}-x_{j-1}^{+}\|^{2}].\end{array}\nonumber 
\end{eqnarray}
Therefore, 
\begin{eqnarray*}
\|x_{w'}^{+}-0\|^{2} & \overset{\scriptsize\mbox{Cor. }\ref{cor:decrease-after-w-steps}}{\leq} & \begin{array}{c}
\|x_{0}^{+}-0\|^{2}-\underset{j=1}{\overset{w'}{\sum}}[\|x_{j}^{+}-x_{j}^{\circ}\|^{2}+\|x_{j}^{\circ}-x_{j-1}^{+}\|^{2}]\end{array}\\
 & \overset{\eqref{eq:distance-last-iter-4}}{\leq} & \begin{array}{c}
\|x_{0}^{+}-0\|^{2}-\frac{\mu^{2}}{2w'}\|x_{w'}^{+}-0\|^{2}.\end{array}
\end{eqnarray*}
Rearranging the above gives us \eqref{eq:lin-conv-IP90-concl} as
needed. 
\end{proof}

\section{\label{sec:Polyhedral-lin-conv}Asymptotic linear convergence 2:
Adapting \cite{Deutsch_Hundal_rate_Dykstra}}

In this section, we show that the iterations in Algorithm \ref{alg:classical-Dyk}
result in an asymptotic linear convergence of the primal objective
value when the sets $C_{i}$ are polyhedral.

\subsection{Preliminaries and results from \cite{Deutsch_Hundal_rate_Dykstra}}

In this subsection, we list the assumptions we make, and also recall
some results from \cite{Deutsch_Hundal_rate_Dykstra} useful for the
proof of asymptotic linear convergence. 
\begin{lem}
\label{lem:co-P-commute}(See \cite[Lemma 3.2]{Deutsch_Hundal_rate_Dykstra})
If $H$ is a closed linear variety in a Hilbert space $X$, then $P_{H}(\cdot)$
is ``affine'', that is, 
\[
\begin{array}{c}
P_{H}\bigg(\underset{i=1}{\overset{n}{\sum}}\alpha_{i}x_{i}\bigg)=\underset{i=1}{\overset{n}{\sum}}\alpha_{i}P_{H}(x_{i})\end{array}
\]
 for all $x_{i}\in X$ and any $\alpha_{i}\in\mathbb{R}$ which satisfy
$\sum_{i=1}^{n}\alpha_{i}=1$. 

In particular, if $A$ is any nonempty subset of $X$, then 
\[
P_{H}(\co(A))=\co(P_{H}(A)).
\]

\end{lem}
We have the following result that was proved within bigger results
in \cite{Deutsch_Hundal_rate_Dykstra}.
\begin{lem}
\label{lem:From-thm-2-4}(Local behavior of Dykstra-like iterations)
Let $X$ be a Hilbert space. Let $\mathcal{H}\subset X$ be the halfspace
$\{x:\langle x,f\rangle\leq0\}$, where $\|f\|=1$, and let $H$ be
the hyperplane $\{x:\langle x,f\rangle=0\}$. 
\begin{enumerate}
\item For any $x\in X$ and $\lambda\geq0$, we have $P_{\mathcal{H}}(x+\lambda f)\in\co\{x,P_{H}(x)\}.$
\item If $x+\lambda f-P_{\mathcal{H}}(x+\lambda f)\neq0$, then $P_{\mathcal{H}}(x+\lambda f)=P_{H}(x+\lambda f)$.
\end{enumerate}
\end{lem}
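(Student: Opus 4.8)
The plan is to prove the two claims in Lemma \ref{lem:From-thm-2-4} directly from the geometry of projection onto a halfspace. First I would recall the explicit formula for the projection onto $\mathcal{H}=\{x:\langle x,f\rangle\le 0\}$ with $\|f\|=1$: for any $u\in X$,
\[
P_{\mathcal{H}}(u)=u-\max\{\langle u,f\rangle,0\}\,f,\qquad P_{H}(u)=u-\langle u,f\rangle\,f.
\]
With $u=x+\lambda f$ we get $\langle u,f\rangle=\langle x,f\rangle+\lambda$, so $P_{\mathcal{H}}(x+\lambda f)=x+\lambda f-\max\{\langle x,f\rangle+\lambda,0\}\,f$.

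For statement (1), I would split into the two cases determined by the sign of $\langle x,f\rangle+\lambda$. If $\langle x,f\rangle+\lambda\le 0$, then $P_{\mathcal{H}}(x+\lambda f)=x+\lambda f\in\mathcal{H}$; I claim this point lies in $\co\{x,P_H(x)\}$. Indeed $P_H(x)=x-\langle x,f\rangle f$, so the segment from $x$ to $P_H(x)$ is $\{x-t\langle x,f\rangle f: t\in[0,1]\}$, and writing $x+\lambda f = x - t\langle x,f\rangle f$ requires $t=-\lambda/\langle x,f\rangle$; one checks $t\in[0,1]$ exactly because $\langle x,f\rangle\le \langle x,f\rangle+\lambda\le 0$ forces $0\le -\lambda\le -\langle x,f\rangle$ when $\langle x,f\rangle<0$ (and the degenerate case $\langle x,f\rangle=0$ forces $\lambda=0$, giving the point $x$ itself). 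If instead $\langle x,f\rangle+\lambda>0$, then $P_{\mathcal{H}}(x+\lambda f)=x+\lambda f-(\langle x,f\rangle+\lambda)f=x-\langle x,f\rangle f=P_H(x)$, which is trivially in $\co\{x,P_H(x)\}$. This covers all cases. A cleaner way to package this: since $\lambda\ge 0$, the point $x+\lambda f$ lies on the ray from $x$ in direction $f$; projecting this segment (or ray point) onto $\mathcal{H}$ — noting $P_H$ is the ``limit'' projection reached once we cross the boundary — always lands in the convex hull of the two extreme behaviors $x$ (if already feasible, no movement) and $P_H(x)$. I expect the bookkeeping of which endpoint is which, and handling $\langle x,f\rangle=0$, to be the only mildly fiddly part, but it is routine.

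For statement (2), suppose $x+\lambda f-P_{\mathcal{H}}(x+\lambda f)\ne 0$. From the formula, $x+\lambda f-P_{\mathcal{H}}(x+\lambda f)=\max\{\langle x,f\rangle+\lambda,0\}\,f$, so this being nonzero forces $\langle x,f\rangle+\lambda>0$. But then, as computed above, $P_{\mathcal{H}}(x+\lambda f)=x+\lambda f-(\langle x,f\rangle+\lambda)f$, while $P_H(x+\lambda f)=(x+\lambda f)-\langle x+\lambda f,f\rangle f=x+\lambda f-(\langle x,f\rangle+\lambda)f$ as well, since $\langle f,f\rangle=1$. Hence the two projections coincide. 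There is essentially no obstacle here; it is immediate once the halfspace/hyperplane projection formulas are written down.

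In summary, the whole proof reduces to writing the two projection formulas and doing a short case analysis on the sign of $\langle x,f\rangle+\lambda$; the ``hard part,'' such as it is, is merely verifying the convex-combination coefficient lies in $[0,1]$ in statement (1), which follows from $0\le\lambda$ and $\langle x,f\rangle\le\langle x,f\rangle+\lambda\le 0$ in the relevant case. No deeper results from the paper are needed.
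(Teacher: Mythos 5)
Your proof is correct and follows essentially the same route as the paper's: both split into the case where $x+\lambda f$ already lies in $\mathcal{H}$ (so the halfspace projection is the identity and one checks $x+\lambda f$ lies on the segment from $x$ to $P_H(x)$) and the case where it does not (so $P_{\mathcal{H}}(x+\lambda f)=P_H(x+\lambda f)=P_H(x)$, giving (1) and (2) immediately). Your version merely makes the case split explicit via the closed-form projection formula $P_{\mathcal H}(u)=u-\max\{\langle u,f\rangle,0\}f$ and verifies the convex coefficient $t=-\lambda/\langle x,f\rangle\in[0,1]$ directly, which is a fine, slightly more computational presentation of the same argument.
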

\begin{proof}
We can easily check that $P_{H}(x+\lambda f)=P_{H}(x)$. If $P_{\mathcal{H}}(x+\lambda f)=P_{H}(x)$,
then (1) holds. If $P_{\mathcal{H}}(x+\lambda f)\neq P_{H}(x+\lambda f)=P_{H}(x)$,
then the only possibility is that $x+\lambda f\in\intr\mathcal{H}$,
so that $P_{\mathcal{H}}(x+\lambda f)=x+\lambda f$. The conclusion
(1) can also be easily checked. Conclusion (2) is also easy to check.
\end{proof}

\subsection{Proof of result}

In this subsection, we present the asymptotic linear convergence result
and its proof.

We have the following generalization of \cite[Theorem 3.3]{Deutsch_Hundal_rate_Dykstra}. 
\begin{thm}
\label{thm:From-thm-3-3}(An estimate of iterates in Dykstra's algorithm)
Let $m_{1}$ and $m_{2}$ be two integers. For $i=1,2$, define $H_{j}^{(i)}$,
where $j\in\{1,\dots,m_{i}\}$, are subspaces of a Hilbert space $X$.
Define sets $E^{(i)}\in\{0,1\}^{m_{i}}$ and $D^{(i)}\in\{0,1\}^{m_{i}}$
such that $E^{(i)}\geq D^{(i)}$. Let $x_{1}$, $x_{2}$ and $x_{3}$
be points in $X$. Define $K(E^{(i)},D^{(i)},x_{i})$ by 
\begin{equation}
K(E^{(i)},D^{(i)},x_{i}):=\co\big\{\bar{P}_{S^{(i)}}(x_{i}):S^{(i)}\in\{0,1\}^{m_{i}}\mbox{ and }E^{(i)}\geq S^{(i)}\geq D^{(i)}\big\},\label{eq:the-K}
\end{equation}
where 
\begin{eqnarray}
\bar{P}_{S^{(i)}}(x_{i}) & = & Q_{S^{(i)},m_{i}}Q_{S^{(i)},m_{i}-1}\cdots Q_{S^{(i)},1}(x_{i}),\label{eq:def-bar-P}\\
\mbox{ and }Q_{S^{(i)},j} & = & \begin{cases}
P_{H_{j}^{(i)}} & \mbox{ if }S_{j}^{(i)}=1\\
I & \mbox{ if }S_{j}^{(i)}=0.
\end{cases}\nonumber 
\end{eqnarray}
If $x_{i+1}\in K(E^{(i)},D^{(i)},x_{i})$ for $i\in\{1,2\}$, then
$x_{3}\in K([E^{(1)},E^{(2)}],[D^{(1)},D^{(2)}],x_{1})$, where $[E^{(1)},E^{(2)}]$
and $[D^{(1)},D^{(2)}]$ are the concatenation of the respective vectors,
and the hyperplanes are relabeled accordingly. \end{thm}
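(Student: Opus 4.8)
\textbf{Proof strategy for Theorem~\ref{thm:From-thm-3-3}.}

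The plan is to unfold the two membership hypotheses and recombine them, using the ``affine'' property of projections onto linear varieties (Lemma~\ref{lem:co-P-commute}) to push the second convex combination through the compositions of projections associated with the first block. Write $x_2 = \sum_k \alpha_k \bar P_{S^{(1)}_k}(x_1)$ as a convex combination over selection vectors $S^{(1)}_k$ with $E^{(1)}\geq S^{(1)}_k\geq D^{(1)}$, and write $x_3 = \sum_l \beta_l \bar P_{S^{(2)}_l}(x_2)$ similarly over $S^{(2)}_l$ with $E^{(2)}\geq S^{(2)}_l\geq D^{(2)}$. The goal is to show each $\bar P_{S^{(2)}_l}(x_2)$ — hence $x_3$, being a convex combination of such points — lies in $K([E^{(1)},E^{(2)}],[D^{(1)},D^{(2)}],x_1)$.

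First I would fix one selection $S^{(2)}_l$ and observe that $\bar P_{S^{(2)}_l}(\cdot) = Q_{S^{(2)}_l,m_2}\cdots Q_{S^{(2)}_l,1}(\cdot)$ is a composition of maps each of which is either the identity or a projection onto a subspace, hence each is affine in the sense of Lemma~\ref{lem:co-P-commute} (the identity trivially, and $P_{H_j^{(2)}}$ because the $H_j^{(2)}$ are subspaces, which are in particular closed linear varieties). A finite composition of affine-preserving maps is affine-preserving, so $\bar P_{S^{(2)}_l}$ commutes with convex combinations: $\bar P_{S^{(2)}_l}(x_2) = \bar P_{S^{(2)}_l}\big(\sum_k \alpha_k \bar P_{S^{(1)}_k}(x_1)\big) = \sum_k \alpha_k\, \bar P_{S^{(2)}_l}\big(\bar P_{S^{(1)}_k}(x_1)\big)$. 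Now $\bar P_{S^{(2)}_l}\circ \bar P_{S^{(1)}_k}$ is exactly $\bar P_{S}(x_1)$ for the concatenated selection $S = [S^{(1)}_k, S^{(2)}_l]$ acting on the relabeled hyperplane list $H^{(1)}_1,\dots,H^{(1)}_{m_1},H^{(2)}_1,\dots,H^{(2)}_{m_2}$, because $\bar P$ applies the factors in increasing index order and the first block's factors all precede the second block's. Since $E^{(1)}\geq S^{(1)}_k\geq D^{(1)}$ and $E^{(2)}\geq S^{(2)}_l\geq D^{(2)}$, the concatenation satisfies $[E^{(1)},E^{(2)}]\geq S\geq [D^{(1)},D^{(2)}]$ componentwise, so $\bar P_S(x_1)$ is one of the generators of $K([E^{(1)},E^{(2)}],[D^{(1)},D^{(2)}],x_1)$. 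Thus each $\bar P_{S^{(2)}_l}(x_2)$ is a convex combination of generators, and then $x_3 = \sum_l \beta_l \bar P_{S^{(2)}_l}(x_2)$ is a convex combination of convex combinations of generators, hence an element of $K([E^{(1)},E^{(2)}],[D^{(1)},D^{(2)}],x_1)$ since convex hulls are closed under this operation.

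The main obstacle — really the only point requiring care — is the bookkeeping: making sure the concatenation of selection vectors lines up with the relabeling of hyperplanes so that $\bar P_{S^{(2)}_l}\circ\bar P_{S^{(1)}_k}$ genuinely equals $\bar P_{[S^{(1)}_k,S^{(2)}_l]}$ in the composed index ordering of \eqref{eq:def-bar-P}, and checking that the order of application (factors with larger index applied last) is consistent with ``block $1$ first, then block $2$.'' One also needs the elementary but essential fact that subspaces are closed linear varieties so Lemma~\ref{lem:co-P-commute} applies to each $P_{H_j^{(i)}}$, and that the identity map trivially preserves affine combinations. Everything else is routine: finitely many nested convex combinations collapse to a single convex combination over the product index set.
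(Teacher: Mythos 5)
Your proposal is correct and takes essentially the same approach as the paper: both rely on Lemma \ref{lem:co-P-commute} to push the composed projections through the convex combinations, identify $\bar P_{S^{(2)}}\circ\bar P_{S^{(1)}}$ with $\bar P_{[S^{(1)},S^{(2)}]}$ under the relabeling, and collapse the resulting nested convex combination into a single one. The paper phrases the argument at the level of convex hulls of sets (deriving \eqref{eq:key-to-3-3-1}$\subset$\eqref{eq:key-to-3-3-2} via the claim \eqref{eq:recur-for-3-3} and the rephrasing \eqref{eq:rephrased-for-3-3}), whereas you work elementwise with explicit convex-combination coefficients, but these are the same argument.
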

\begin{proof}
Note from the definition of $K(\cdot,\cdot,\cdot)$ that our result
would hold if we can prove that\begin{subequations}\label{eq:key-to-3-3}
\begin{eqnarray}
\!\!\!\!\!\!\!\! &  & \co\big\{\bar{P}_{S^{(2)}}\big(\co\{\bar{P}_{S^{(1)}}(x_{1}):E^{(1)}\geq S^{(1)}\geq D^{(1)}\}\big):E^{(2)}\geq S^{(2)}\geq D^{(2)}\big\}\label{eq:key-to-3-3-1}\\
\!\!\!\!\!\!\!\! & \subset & \co\{\bar{P}_{S^{(2)}}\bar{P}_{S^{(1)}}(x_{1}):E^{(i)}\geq S^{(i)}\geq D^{(i)}\mbox{ for }i=1,2\}.\label{eq:key-to-3-3-2}
\end{eqnarray}
\end{subequations}We write down a claim whose proof is embedded within
its statement. 

\textbf{Claim:} Suppose $P_{1}$ and $P_{2}$ are affine operators
in the sense of Lemma \ref{lem:co-P-commute}. Let $A$ be any set
in $X$. From the fact that $P_{2}(P_{1}(x))=(P_{1}P_{2})(x)$, we
have 
\begin{eqnarray}
(P_{2}P_{1})\big(\co(A)\big) & = & P_{2}\Big(P_{1}\big(\co(A)\big)\Big)\label{eq:recur-for-3-3}\\
 & \overset{\scriptsize\mbox{Lem. }\ref{lem:co-P-commute}}{=} & P_{2}\Big(\co\big(P_{1}(A)\big)\Big)\nonumber \\
 & \overset{\scriptsize\mbox{Lem. }\ref{lem:co-P-commute}}{=} & \co\Big(P_{2}\big(P_{1}(A)\big)\Big)=\co\big((P_{2}P_{1})(A)\big).\nonumber 
\end{eqnarray}
By making use of the principle in \eqref{eq:recur-for-3-3}, the term
in \eqref{eq:key-to-3-3-1} can be seen to be 
\begin{equation}
\co\big\{\co\{\bar{P}_{S^{(2)}}\bar{P}_{S^{(1)}}(x_{1}):E^{(1)}\geq S^{(1)}\geq D^{(1)}\}:E^{(2)}\geq S^{(2)}\geq D^{(2)}\big\}.\label{eq:key-t-3-3-3}
\end{equation}
To prove \eqref{eq:key-t-3-3-3}$\subset$\eqref{eq:key-to-3-3-2},
note that this inclusion can be phrased as 
\begin{equation}
\co\big\{\co\{p_{i,j}:i\in I^{*}\}:j\in J^{*}\big\}\subset\co\{p_{i,j}:i\in I^{*},j\in J^{*}\},\label{eq:rephrased-for-3-3}
\end{equation}
where $I^{*}$ and $J^{*}$ are two index sets and $p_{i,j}\in X$
corresponds to $\bar{P}_{S^{(2)}}\bar{P}_{S^{(1)}}(x_{1})$. It is
clear every element on the left hand side of \eqref{eq:rephrased-for-3-3}
can be written as a convex combination of the $p_{i,j}$s, so \eqref{eq:rephrased-for-3-3}
holds. Thus we are done.
\end{proof}

\begin{prop}
\label{prop:warmstart-Dykstra-to-find-e-j}(Dual problem in breaking
up $C_{i}$) Recall Assumption \ref{assm:poly-C-i} and Algorithm
\ref{alg:classical-Dyk}. Suppose that $s(j)=i$. Recall from Remark
\ref{rem:known-dyk-ppties} that 
\begin{equation}
e_{j,j}=\underset{e}{\arg\min}\bigg[\frac{1}{2}\Big\|\underbrace{d-\underset{{1\leq i'\leq m\atop i'\neq i}}{\sum}e_{\pi(j,i'),j}}_{x_{j-1}^{+}+e_{p(j),j}}-e\Big\|^{2}+\delta^{*}(e,C_{i})\bigg].\label{eq:e-j-j-large-min}
\end{equation}
Recall also that $C_{i}=\cap_{r=1}^{K}\mathcal{H}_{i,r}$. Define
the function $v':X^{K}\to\mathbb{R}\cup\{\infty\}$ by 
\begin{eqnarray}
v'(y_{1}'',\dots,y_{K}'') & := & \!\!\!\!\begin{array}{c}
\frac{1}{2}\big\| x_{j-1}^{+}+e_{p(j),j}-\underset{r'=1}{\overset{K}{\sum}}y_{r'}''\big\|^{2}+\underset{r'=1}{\overset{K}{\sum}}\delta^{*}(y_{r'}'',\mathcal{H}_{i,r'}).\quad\end{array}\label{eq:defn-v-prime}
\end{eqnarray}
If $y_{r'}''$ are chosen such that 
\begin{equation}
\begin{array}{c}
\underset{r'=1}{\overset{K}{\sum}}y_{r'}''=e_{p(j),j}\mbox{, and }\underset{r'=1}{\overset{K}{\sum}}\delta^{*}(y_{r'}'',\mathcal{H}_{i,r'})=\delta^{*}(e_{p(j),j},C_{i}),\end{array}\label{eq:decompose-y-pp}
\end{equation}
 then 
\begin{equation}
\begin{array}{c}
v'(y_{1}'',\dots,y_{K}'')+\underset{{1\leq i'\leq m\atop i'\neq i}}{\sum}\delta^{*}(e_{\pi(j,i'),j},C_{i'})=v(e_{\pi(j,1),j},\dots,e_{\pi(j,m),j}).\end{array}\label{eq:expanded-polyhedron-and-dual-val}
\end{equation}
Moreover, let $(y_{1}',\dots,y_{K}')$ be such that 
\begin{equation}
(y_{1}',\dots,y_{K}')\in\underset{(y_{1}'',\dots,y_{K}'')}{\arg\min}v'(y_{1}'',\dots,y_{K}'').\label{eq:def-Dykstra-subpblm}
\end{equation}
 Then the term $e_{j,j}$ is equal to $e_{j,j}=\sum_{r=1}^{K}y'_{r}$.\end{prop}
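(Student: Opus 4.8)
The plan is to recognize $v'$ in \eqref{eq:defn-v-prime} as a dual function of \emph{exactly the same} best approximation problem that produces $e_{j,j}$, only with the single set $C_i$ replaced by its defining halfspaces $\mathcal H_{i,1},\dots,\mathcal H_{i,K}$, and then to read off the claimed identity from the dual-to-primal recovery already recorded in Theorem~\ref{thm:easy-Dykstra-facts}(3). Throughout, write $b:=x_{j-1}^{+}+e_{p(j),j}$; by line~8 of Algorithm~\ref{alg:classical-Dyk} this is the point $z$ projected onto $C_{s(j)}=C_i$, and $x_j^{\circ}=P_{C_i}(b)$.

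\emph{Two duals, one primal.} On one side, \eqref{eq:e-j-j-large-min} together with Remark~\ref{rem:known-dyk-ppties} exhibits $e_{j,j}$ as the minimizer of $e\mapsto\tfrac12\|b-e\|^{2}+\delta^{*}(e,C_i)$, i.e.\ as the Fenchel dual variable of the single-set projection problem $\min_x\tfrac12\|x-b\|^{2}$ subject to $x\in C_i$; directly from line~10, $e_{j,j}=z-x_j^{\circ}=b-P_{C_i}(b)$. On the other side, since $C_i=\cap_{r=1}^{K}\mathcal H_{i,r}$ we have $\delta(\cdot,C_i)=\sum_{r=1}^{K}\delta(\cdot,\mathcal H_{i,r})$, so that \emph{same} primal problem is a BAP of the form \eqref{eq:P-primal} with $m$ replaced by $K$, the sets replaced by $\mathcal H_{i,1},\dots,\mathcal H_{i,K}$, and $d$ replaced by $b$; and $v'$ in \eqref{eq:defn-v-prime} is precisely its dual \eqref{eq:D-prime-d-def}, up to an additive constant that appears once one translates so that the projection target becomes $0$ (the normalization used in Definition~\ref{Def:2-dual-pblms} and Theorem~\ref{thm:easy-Dykstra-facts}). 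Relations \eqref{eq:decompose-y-pp}--\eqref{eq:expanded-polyhedron-and-dual-val}, already established in the proposition, are exactly the computation confirming that $v'$ plays this dual role.

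\emph{Recover the primal and compare.} Both duals are attached to the same primal problem, whose unique solution is $\bar x:=P_{C_i}(b)=x_j^{\circ}$. Applying Theorem~\ref{thm:easy-Dykstra-facts}(3) to (the translate of) the $K$-set BAP, a minimizer $(y_1',\dots,y_K')$ of $v'$ --- which exists by \eqref{eq:def-Dykstra-subpblm} --- recovers the primal solution as $b-\sum_{r=1}^{K}y_r'=\bar x$. Combining with $e_{j,j}=b-\bar x$ gives $\sum_{r=1}^{K}y_r'=e_{j,j}$, as claimed.

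\emph{Main obstacle.} The only delicate point is the duality bookkeeping: that $v'$ really is a dual of the displayed $K$-set BAP, with no duality gap and with the primal recovered as $b$ minus the sum of the dual variables, and that the additive constant and the ``$P_C(d)=0$'' normalization built into Theorem~\ref{thm:easy-Dykstra-facts} are tracked correctly. Since $C_i$ is a nonempty polyhedron this is routine polyhedral Fenchel--Rockafellar duality (strong duality and dual attainment hold, and the support function of the intersection is the exact infimal convolution of the halfspace support functions). If one prefers to sidestep this, it can be replaced by a direct verification: put $\bar x:=b-\sum_{r}y_r'$; the block first-order optimality conditions for $(y_r')$ in \eqref{eq:defn-v-prime} read $b-\sum_{r'}y_{r'}'\in\partial\delta^{*}(y_r',\mathcal H_{i,r})$ for each $r$, equivalently $\bar x\in\mathcal H_{i,r}$ and $\langle y_r',\bar x\rangle=\delta^{*}(y_r',\mathcal H_{i,r})$; summing over $r$ yields $\bar x\in\cap_r\mathcal H_{i,r}=C_i$ and $\langle\sum_{r}y_r',\bar x\rangle=\sum_{r}\delta^{*}(y_r',\mathcal H_{i,r})=\delta^{*}(\sum_{r}y_r',C_i)$, which is exactly the optimality condition $b-\sum_{r}y_r'\in\partial\delta^{*}(\sum_{r}y_r',C_i)$ for $\sum_{r}y_r'$ to minimize $e\mapsto\tfrac12\|b-e\|^{2}+\delta^{*}(e,C_i)$; strict convexity of this objective then forces $\sum_{r}y_r'=e_{j,j}$.
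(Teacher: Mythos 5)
Your proof is correct and takes essentially the same route as the paper: the paper disposes of the final identity with a one-line appeal to ``the background theory of Dykstra's algorithm,'' and your first argument is exactly the content of that appeal --- $v'$ is the dual of the single projection problem $\min_x \tfrac12\|x-b\|^2$ s.t.\ $x\in C_i=\cap_r\mathcal H_{i,r}$, and the minimizer of the dual recovers the primal as $b-\sum_r y_r'$, hence $\sum_r y_r'=b-P_{C_i}(b)=e_{j,j}$. Your bookkeeping of the additive constant ($v'$ differs from the normalized dual of Theorem~\ref{thm:easy-Dykstra-facts} by $\langle b,P_{C_i}(b)\rangle-\tfrac12\|P_{C_i}(b)\|^2$, which is independent of the $y''$'s) is correct, and the alternative direct verification via block first-order optimality --- showing $b-\sum_r y_r'\in\partial\delta^*(\sum_r y_r',C_i)$ by summing the blockwise conditions and then invoking strong convexity --- is a clean, self-contained substitute for the duality appeal that the paper does not spell out.
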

\begin{proof}
The equality \eqref{eq:expanded-polyhedron-and-dual-val} follows
directly from \eqref{eq:decompose-y-pp} and \eqref{eq:e-j-j-large-min},
and how $v(\cdot)$ and $v'(\cdot)$ are defined in \eqref{eq:D-prime-d-def}
and \eqref{eq:defn-v-prime}. The formula for $e_{j,j}$ follows from
the background theory of Dykstra's algorithm. 
\end{proof}
We recall a warmstart Dykstra's algorithm for finding $e_{j,j}$ from
$e_{p(j),j}$. 
\begin{lyxalgorithm}
\label{alg:small-Dykstra}(Warmstart Dykstra's algorithm) Consider
the problem of finding $e_{j,j}$ from $e_{p(j),j}$ in lines 8 to
10 of Algorithm \ref{alg:classical-Dyk}, and suppose $s(j)=i$. Let
$e_{p(j),j}=\sum_{r=1}^{K'}\lambda_{r-K'}f_{i,r}$, where $\lambda_{r}\geq0$
for all $r\in[1-K',0]$. The sequence $\{\lambda_{r}\}_{r=1-K'}^{\infty}$
is defined as follows.

01$\quad$$x'_{0}=x_{j-1}^{+}$

02$\quad$For $t=1,2,\dots$

03$\quad$$\quad$$x'_{t}=P_{\mathcal{H}_{i,[t]}}(x'_{t-1}+\lambda_{t-K'}f_{i,[t]})$

04$\quad$$\quad$Let $\lambda_{t}$ be such that $x'_{t}+\lambda_{t}f_{i,[t]}=x'_{t-1}+\lambda_{t-K'}f_{i,[t]}$.

05$\quad$End for 
\end{lyxalgorithm}
If the $\lambda_{r}$ were chosen so that $\lambda_{r}=0$ for all
$r\in[1-K',0]$, then Algorithm \ref{alg:small-Dykstra} reduces to
Dykstra's algorithm. The $\{\lambda_{r}\}_{r=1-K'}^{0}$ are warmstarts
to Dykstra's algorithm, so we refer to Algorithm \ref{alg:small-Dykstra}
as a warmstart Dykstra's algorithm. The fact that $\lim_{t\to\infty}x'_{t}=P_{C_{i}}(x_{j}^{\circ}+e_{p(j),j})$
(even with the nonzero warmstart values $\lambda_{r}$ for $r\in[1-K,0]$)
follows from some simple changes to the Boyle-Dykstra theorem (see
\cite{Pang_DBAP}.)

The next result is a relationship between $x_{j}^{\circ}$ and $x_{j-1}^{+}$. 
\begin{prop}
\label{prop:Dykstra-leads-to-prj-form}(Dykstra steps leads to projection
form) Suppose that in Algorithm \ref{alg:classical-Dyk}, we have
$\|x_{0}^{+}\|\leq\min(\hat{\epsilon},\sqrt{2\bar{\epsilon}})$ and
that $\delta^{*}(y_{i}^{\circ},C_{i})=0$ for all $i\in[1,m]$. Recall
the definition of $K(\cdot,\cdot,\cdot)$ in Theorem \ref{thm:From-thm-3-3}.
We can find $E,D\in\{0,1\}^{\bar{t}}$ such that 
\[
x_{j}^{\circ}\in K(E,D,x_{j-1}^{+}),
\]
 
\begin{equation}
E_{t}=1\mbox{ for all }t\in[1,\bar{t}]\mbox{ and }D_{t}=\begin{cases}
1 & \mbox{ if }\lambda_{t}>0\\
0 & \mbox{ otherwise,}
\end{cases}\label{eq:E-t-and-D-t}
\end{equation}
and
\begin{enumerate}
\item For $t\in\{1,\dots,\bar{t}-1\}$, the $t$th hyperplane is $H_{i,[t]}$
(see \eqref{eq:starting-hyperplanes}), where $[t]$ is the integer
in $\{1,\dots,K'\}$ such that $K'$ divides $t-[t]$. 
\item The $\bar{t}$th hyperplane (i.e., the last hyperplane), which we
call $H_{j}$, is the intersection of some of the hyperplanes $H_{i,[t]}$
such that $E_{t}=D_{t}=1$. 
\end{enumerate}
Furthermore, the dual vector $e_{j,j}$ is in the conical hull of
$\{f_{i,[t]}:E_{t}=D_{t}=1\}$.\end{prop}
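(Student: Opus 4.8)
The plan is to analyze the warmstart Dykstra Algorithm~\ref{alg:small-Dykstra} and show that, when $\|x_0^+\|$ is small enough, it stabilizes quickly in a way that can be packaged into the $K(\cdot,\cdot,\cdot)$ formalism of Theorem~\ref{thm:From-thm-3-3}. First I would invoke the smallness hypotheses: by Lemma~\ref{lem:set-N-1}(1), since $\|x_0^+\|\le\sqrt{2\bar\epsilon}$ and the dual value is nonincreasing along Algorithm~\ref{alg:classical-Dyk}, every intermediate iterate $x_j^\circ$ lies within $\gamma$ of $x_\infty=0$, so each projection $P_{C_{s(j)}}$ agrees with $P_{C_{s(j)}'}$, i.e. only the halfspaces $\mathcal H_{i,r}$ with $r\in[1,K']$ (those tight at $0$) are ever active. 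Likewise, since $\delta^*(y_i^\circ,C_i)=0$ for all $i$, Lemma~\ref{lem:set-N-1}(A) gives that $e_{p(j),j}=y_{s(j)}^\circ$ (or more generally the incoming correction) lies in the normal cone at $0$, so it is a nonnegative combination $\sum_{r=1}^{K'}\lambda_{r-K'}f_{i,r}$ — which is exactly the decomposition assumed at the start of Algorithm~\ref{alg:small-Dykstra}. This justifies running the warmstart Dykstra subroutine on the hyperplanes $H_{i,r}$, $r\in[1,K']$, rather than the full halfspaces.

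Next I would use Lemma~\ref{lem:From-thm-3-3} (the local behavior of Dykstra-like iterations) at each step $t$ of Algorithm~\ref{alg:small-Dykstra}: line 03 computes $x_t' = P_{\mathcal H_{i,[t]}}(x_{t-1}'+\lambda_{t-K'}f_{i,[t]})$, and by part~(1) of that lemma $x_t'\in\co\{x_{t-1}',P_{H_{i,[t]}}(x_{t-1}')\}$; equivalently, writing $Q$ for either $P_{H_{i,[t]}}$ or the identity $I$, we have $x_t'\in\co\{Q_0(x_{t-1}'),Q_1(x_{t-1}')\}$ where the two choices correspond to $S_t=0$ and $S_t=1$. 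So a single step of the subroutine sits inside $K(E^{(t)},D^{(t)},x_{t-1}')$ with $E^{(t)}=1$ always and $D^{(t)}=0$ or $1$ according to whether the identity branch is genuinely possible. The point of part~(2) of Lemma~\ref{lem:From-thm-3-3} is that when $\lambda_t>0$ the identity branch is ruled out (the residual $x_{t-1}'+\lambda_{t-K'}f_{i,[t]} - x_t'$ is nonzero), forcing $x_t'=P_{H_{i,[t]}}(x_{t-1}')$, which is precisely the rule $D_t=1$ iff $\lambda_t>0$ in \eqref{eq:E-t-and-D-t}. Chaining these one-step inclusions with Theorem~\ref{thm:From-thm-3-3} (applied repeatedly, concatenating the $E,D$ vectors and relabelling hyperplanes) gives $x_{\bar t}' \in K(E,D,x_0')=K(E,D,x_{j-1}^+)$ for the truncated run up to some index $\bar t-1$; the last ($\bar t$th) hyperplane $H_j$ is then chosen as the intersection of those $H_{i,[t]}$ with $E_t=D_t=1$, which captures the limiting affine behavior, i.e. $x_j^\circ=\lim_t x_t' = P_{C_i}(x_{j-1}^++e_{p(j),j})$ projected consistently onto that intersection. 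The finiteness of $\bar t$ (that Dykstra on finitely many hyperplanes through a common point terminates in finitely many \emph{distinct} iterates, so one can take a finite $K(\cdot,\cdot,\cdot)$ representation) follows because, near $0$, each $x_t'$ already lies in the common intersection after finitely many passes — this is where the polyhedral/hyperplane structure and the smallness of $\|x_0^+\|$ are used together.

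Finally, for the last sentence of the statement — that $e_{j,j}$ lies in the conical hull of $\{f_{i,[t]}:E_t=D_t=1\}$ — I would argue as follows. By Proposition~\ref{prop:warmstart-Dykstra-to-find-e-j} and \eqref{eq:def-Dykstra-subpblm}, $e_{j,j}=\sum_{r=1}^K y_r'$ where the $y_r'$ solve the inner dual, and since only the first $K'$ halfspaces are active (by Lemma~\ref{lem:set-N-1}(1)) and each $\mathcal H_{i,r}$ is a halfspace with outward normal $f_{i,r}$, each $y_r'$ is a nonnegative multiple of $f_{i,r}$; moreover $y_r'\ne 0$ forces the corresponding constraint to be tight at $x_j^\circ$, i.e. $\lambda_r>0$ in the warmstart accounting, hence $D_t=1$ for the indices $t$ with $[t]=r$. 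Thus $e_{j,j}$ is a nonnegative combination of exactly those $f_{i,[t]}$ with $E_t=D_t=1$. I expect the main obstacle to be the bookkeeping: setting up the index $\bar t$, the relabelling of hyperplanes required by Theorem~\ref{thm:From-thm-3-3}, and verifying that the warmstart values $\lambda_r$, $r\in[1-K',0]$, are correctly folded into the $D$-vector so that \eqref{eq:E-t-and-D-t} holds — i.e. making the "finitely many distinct iterates" claim rigorous so that a single finite $K(E,D,\cdot)$ suffices, rather than an infinite composition. Everything else is a direct application of Lemmas~\ref{lem:From-thm-3-3} and \ref{lem:co-P-commute} together with Theorem~\ref{thm:From-thm-3-3}.
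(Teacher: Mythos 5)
Your overall strategy matches the paper's: feed $e_{p(j),j}$ into the warmstart Dykstra subroutine (Algorithm~\ref{alg:small-Dykstra}), use Lemma~\ref{lem:set-N-1} to restrict to the tight hyperplanes $H_{i,r}$, $r\in[1,K']$, use Lemma~\ref{lem:From-thm-2-4} for the one-step inclusions $x_t'\in K(\{1\},\{0\}\mbox{ or }\{1\},x_{t-1}')$, and chain with Theorem~\ref{thm:From-thm-3-3}. (Your repeated reference to ``Lemma~\ref{thm:From-thm-3-3}'' where you describe the local one-step lemma is clearly a label slip for Lemma~\ref{lem:From-thm-2-4}; that is cosmetic.)

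However, there is a genuine gap in the step you yourself flag as the ``main obstacle.'' You justify the existence of a \emph{finite} $\bar t$ by asserting that the warmstart Dykstra iterates on finitely many hyperplanes through $0$ ``terminate in finitely many distinct iterates'' near $0$. That is false in general: Dykstra on two hyperplanes through the origin is essentially von~Neumann's alternating projections onto two subspaces, which converges linearly but not finitely. The paper does not claim finite termination. Instead it chooses $\bar t$ so that (a) the index set $\bar T_t^*=\{[t']:t'\le t,\ \lambda_{t'}>0\}$ has stabilized, (b) (via Lemma~\ref{lem:compare-lem-3-4}) $x_j^\circ - x_t'$ lies in $\sspan\{f_{i,[t']}:t'\in[t-K'+1,t],\ \lambda_{t'}>0\}$ for $t\ge\bar t-1$, and (c) $\lambda_{t'}>0$ with $t'\in[t-K'+1,t]$ implies $x_j^\circ\in H_{i,[t']}$. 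These three facts together show $x_j^\circ=P_{H_j}(x_{\bar t-1}')$ where $H_j=\cap\{H_{i,[t]}:t\in[\bar t-K',\bar t-1],\ \lambda_t>0\}$; that single projection is the $\bar t$th step, and this is what lets a \emph{finite} $K(E,D,\cdot)$ capture the infinite tail. That compression is the idea your proof is missing, and without it the chain of one-step inclusions does not terminate.

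A secondary gap is in the final sentence. You argue $e_{j,j}=\sum_r y_r'$ with each $y_r'$ a nonnegative multiple of $f_{i,r}$, and then assert that $y_r'\neq0$ forces $\lambda_r>0$ ``in the warmstart accounting.'' The connection between the limit dual variables $y_r'$ and the intermediate $\lambda_t$ is not immediate (the $\lambda_t$ for fixed $[t]=r$ form a sequence whose behavior must be analyzed). The paper avoids this by noting the exact identity $x_{j-1}^++e_{p(j),j}-x_t'=\sum_{r=t-K'+1}^t\lambda_r f_{i,[r]}$ at every step, observing that the right-hand side lies in $\mathrm{cone}\{f_{i,[t']}:D_{t'}=1\}$ once $\bar T_t^*$ has stabilized, and then passing to the limit $t\to\infty$ using closedness of that cone and $x_t'\to x_j^\circ$. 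You should replace your dual-variable argument with this primal-difference argument, which is both cleaner and actually rigorous.
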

\begin{proof}
We make use of Proposition \ref{prop:warmstart-Dykstra-to-find-e-j}
and solve \eqref{eq:def-Dykstra-subpblm} using Algorithm \ref{alg:small-Dykstra}.
By Lemma \ref{lem:set-N-1}(2), $y''_{r'}=0$ for all $r'\in[K'+1,K]$,
so we can ignore the indices $K'+1$ to $K$. For example, the summations
``$\sum_{r'=1}^{K}$'' in \eqref{eq:defn-v-prime} can be replaced
by ``$\sum_{r'=1}^{K'}$'' instead. The starting variables $(y_{1}'',\dots,y_{K'}'')$
are chosen so that $y_{r}''=\lambda_{r-K'}f_{i,[r]}$ for $r\in[1,K']$,
$\{\lambda_{r}\}_{r=1-K'}^{0}$ are nonnegative numbers like in Algorithm
\ref{alg:small-Dykstra} and 
\begin{equation}
\begin{array}{c}
\underset{r'=1}{\overset{K'}{\sum}}y_{r'}''=e_{p(j),j},\end{array}\label{eq:starting-dual-Dykstra-subpblm}
\end{equation}
which would satisfy \eqref{eq:decompose-y-pp}. Let $\{x_{t}'\}_{t=0}^{\infty}$
be the iterates generated by Algorithm \ref{alg:small-Dykstra}. The
choice of $\{y_{r'}''\}_{r'=1}^{K'}$ gives $x_{0}'=x_{j-1}^{+}$.
By the convergence properties of a warmstart Dykstra's algorithm,
we have $\lim_{t\to\infty}x'_{t}=x_{j}^{\circ}$. 

Through Lemma \ref{lem:From-thm-2-4}, line 3 of Algorithm \ref{alg:small-Dykstra}
implies that $x_{t}'\in K(\{1\},\{0\},x_{t-1}')$, where the hyperplane
involved is $H_{i,[t]}$. Also, there are indices $t$ for which $\lambda_{t}>0$,
which implies that $x'_{t}=P_{H_{i,[t]}}(x'_{t-1}+\lambda_{t-K'}f_{i,[t]})$,
or $x_{t}'\in K(\{1\},\{1\},x_{t-1}')$. Define $T_{t}^{*}$ and $\bar{T}_{t}^{*}$
by 
\begin{eqnarray*}
T_{t}^{*} & := & \{t':t'\leq t,\lambda_{t'}>0\}\\
\bar{T}_{t}^{*} & := & \{[t']:t'\leq t,\lambda_{t'}>0\}.
\end{eqnarray*}
Since $\bar{T}_{t}^{*}\subset\{1,\dots,K'\}$ and is monotonically
increasing, there is some $\bar{t}$ such that $\bar{T}_{t}^{*}=\bar{T}_{\bar{t}-1}^{*}$
for all $t\geq\bar{t}-1$. Let $\bar{T}^{*}$ be $\bar{T}_{\bar{t}-1}^{*}$.
Using Theorem \ref{thm:From-thm-3-3}, we obtain 
\[
x_{\bar{t}-1}'\in K(E,D,x_{0}'),
\]
where $E,D\in\{0,1\}^{\bar{t}-1}$ are as defined in \eqref{eq:E-t-and-D-t}.
(We still have to resolve $E_{\bar{t}}$ and $D_{\bar{t}}$.) By Lemma
\ref{lem:compare-lem-3-4}, we can increase $\bar{t}$ if necessary
so that if $t\geq\bar{t}-1$, then 
\[
x_{j}^{\circ}-x'_{t}\in\sspan\{f_{i,[t']}:t'\in[t-K'+1,t],\lambda_{t'}>0\}.
\]
Moreover, $\bar{t}$ can be large enough so that if $t\geq\bar{t}-1$,
then $\lambda_{t'}>0$ and $t'\in[t-K'+1,t]$ implies $x_{j}^{\circ}\in H_{i,[t']}$.
This implies that the projection of $x_{\bar{t}-1}'$ onto $H_{j}$,
where $H_{j}$ is defined by 
\[
H_{j}=\cap\{H_{i,[t]}:t\in[\bar{t}-K',\bar{t}-1],\lambda_{t}>0\}.
\]
equals $x_{j}^{\circ}$. In other words $x_{j}^{\circ}=P_{H_{j}}(x_{\bar{t}-1}')$.
Since 
\[
\{[t]:t\in[\bar{t}-K',\bar{t}-1],\lambda_{t}>0\}\subset T^{*},
\]
 (2) holds, and the previous discussions show that (1) holds. 

Lastly, we show that $e_{j,j}=x_{j-1}^{+}+e_{p(j),j}-x_{j}^{\circ}$
is in $\mbox{cone}\{f_{i,[t']}:t'\in T^{*}\}$. Note that $x_{j-1}^{+}+e_{p(j),j}-x_{t}'$
equals $\sum_{r=t-K'+1}^{t}\lambda_{r}f_{i,[r]}$, which lies in $\mbox{cone}\{f_{i,[t']}:D_{t'}=1\}$
if $t$ is large enough. Since $\mbox{cone}\{f_{i,[t']}:t'\in T^{*}\}$
is a closed convex cone and $\lim_{t\to\infty}x_{t}'=x_{j}^{\circ}$
by Dykstra's algorithm, we have that $e_{j,j}=x_{j-1}^{+}+e_{p(j),j}-x_{j}^{\circ}$
is in $\mbox{cone}\{f_{i,[t]}:[t]\in\bar{T}^{*}\}$ as needed. 
\end{proof}
Next, we show that the similar thing happens to SHQP steps.
\begin{prop}
\label{prop:SHQP-leads-to-prj-form}(SHQP steps leads to projection
form) Consider the assumptions in Proposition \ref{prop:Dykstra-leads-to-prj-form}.
Suppose $\|x_{0}^{+}\|\leq\min(\hat{\epsilon},\sqrt{2\bar{\epsilon}})$
and that $\delta^{*}(y_{i}^{\circ},C_{i})=0$ for all $i\in[1,m]$.
Suppose that in the SHQP step of Algorithm \ref{alg:classical-Dyk},
whenever $i\in Q_{j}$, the polyhedron $P_{i,j}$ is the intersection
of halfspaces of the form 
\begin{itemize}
\item [(H1)]$H_{i,r}$, where $r\in[1,K']$ or 
\item [(H2)]$\{x:\langle e_{j',j'},x\rangle\geq0\}$ where $j'\leq j$
and $s(j')=i$. 
\end{itemize}
Then we have $x_{j}^{+}\in K(E,D,x_{j}^{\circ})$ for some $E,D\in\{0,1\}^{l}$,
where $E_{t}=1$ for all $t\in[1,l]$, and 
\begin{enumerate}
\item For $t\in\{1,\dots,l-1\}$, the $t$th hyperplane, which we call $\tilde{H}_{t}$,
is the boundary of a halfspace of either the kind (H1) or (H2) above. 
\item The $l$th hyperplane (i.e., last hyperplane) is the intersection
of some of the hyperplanes mentioned in the previous point (1) for
which $D_{t}=1$.
\end{enumerate}
\end{prop}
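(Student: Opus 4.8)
The plan is to mirror, almost line for line, the structure of the proof of Proposition~\ref{prop:Dykstra-leads-to-prj-form}, replacing the "warmstart Dykstra's algorithm on the halfspaces of a single $C_i$" by a "warmstart Dykstra's algorithm on the halfspaces of kinds (H1) and (H2) making up the polyhedra $P_{i,j}$ for $i \in Q_j$." First I would recall, as in Proposition~\ref{prop:warmstart-Dykstra-to-find-e-j}, that the SHQP subproblem in line~14 of Algorithm~\ref{alg:classical-Dyk} is itself a best-approximation problem whose dual has the block-separable support-function form \eqref{eq:D-prime-d-def}, with the blocks indexed by the halfspaces listed in (H1)/(H2) across all $i \in Q_j$; hence the passage from $x_j^\circ$ to $x_j^+$ (equivalently, from $\{e_{\pi(j,i),j}\}$ to $\{e_{\pi(j,i),j+1}\}$) is realized as the limit of a warmstart Dykstra's algorithm cycling through those halfspaces, exactly as Algorithm~\ref{alg:small-Dykstra} does for a single polyhedron. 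The warmstart data are the existing multipliers: for kind (H1) halfspaces $H_{i,r}$ these come from the decomposition \eqref{eq:decomp-e} of $e_{\pi(j,i),j}$ into nonnegative multiples of the $f_{i,r}$, and for kind (H2) halfspaces $\{x:\langle e_{j',j'},x\rangle \ge 0\}$ the warmstart multiplier is $0$ (these halfspaces are newly introduced in the SHQP step), and in all cases $\delta^*(\cdot, P_{i,j})$ decomposes additively over the halfspaces by \eqref{eq:Pij-good-for-SHQP} together with the vanishing of the relevant support-function values guaranteed by Lemma~\ref{lem:set-N-1}.

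Next I would invoke Lemma~\ref{lem:From-thm-2-4} to write each single projection step of this warmstart Dykstra's algorithm in the form $x'_t \in K(\{1\},\{0\},x'_{t-1})$, with the halfspace being the one being projected onto, and $x'_t \in K(\{1\},\{1\},x'_{t-1})$ precisely when the corresponding multiplier $\lambda_t$ is strictly positive. One must check that $0$ lies on the boundary of every halfspace of kinds (H1) and (H2): for (H1) this is because $r \in [1,K']$ means $H_{i,r}$ is tight at $x_\infty = 0$; for (H2), $e_{j',j'}$ is a normal-cone vector at $0$ by Lemma~\ref{lem:set-N-1}, so $\langle e_{j',j'},0\rangle = 0$ and $0$ is on the boundary of that halfspace. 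Thus all hyperplanes here are genuine linear subspaces, so Theorem~\ref{thm:From-thm-3-3} applies to compose the one-step inclusions into a single inclusion $x'_{\bar t - 1} \in K(E,D,x'_0)$ over a long-enough prefix $\bar t - 1$, with $E \equiv 1$ and $D_t = 1$ iff $\lambda_t > 0$, exactly as in \eqref{eq:E-t-and-D-t}.

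Then, as in the previous proof, I would use Lemma~\ref{lem:compare-lem-3-4} to enlarge $\bar t$ (here renamed $l$) so that once $t \ge l-1$ the tail correction $x'_{\text{final}} - x'_t$ lies in the span of the $f$'s with positive multipliers seen in the last window, and so that $x_j^+$ itself lies on every hyperplane $\tilde H_{t'}$ with $\lambda_{t'} > 0$ in that window; this lets the residual infinitely many projections be absorbed into a single projection onto the intersection $\tilde H_l := \cap\{\tilde H_t : D_t = 1, \text{$t$ in the last window}\}$, giving $x_j^+ = P_{\tilde H_l}(x'_{l-1})$ and hence $x_j^+ \in K(E,D,x_j^\circ)$ with the claimed structure (1)--(2). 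The one extra bookkeeping point compared to Proposition~\ref{prop:Dykstra-leads-to-prj-form} is that the halfspaces now range over several indices $i \in Q_j$ rather than one, so the relabeling of hyperplanes and the verification that the warmstart multipliers are globally consistent (i.e.\ $\sum$ over all halfspaces recovers $\sum_{i \in Q_j} e_{\pi(j,i),j}$ and the support-function values add up) needs to be stated carefully; but this is the same additivity used in the chain \eqref{eq:long-chain-str-cvx-decrease} of Proposition~\ref{prop:Decrease-in-dual}. The main obstacle I anticipate is purely notational: keeping the indexing of the combined halfspace list, its warmstart multipliers, and the window $[l-K'', l-1]$ (for an appropriate period $K''$ equal to the total number of halfspaces across $i \in Q_j$) consistent with the hypotheses so that the final $K(\cdot,\cdot,\cdot)$ statement matches the shape needed downstream; the analytic content is entirely inherited from Propositions~\ref{prop:warmstart-Dykstra-to-find-e-j}--\ref{prop:Dykstra-leads-to-prj-form} and Theorem~\ref{thm:From-thm-3-3}.
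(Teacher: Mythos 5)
Your proposal is correct and follows essentially the same route as the paper: set up the SHQP step's dual problem in the block-separable support-function form over the halfspaces defining the $P_{i,j}$ for $i\in Q_j$ (with the warmstart dual variables required to sum to $e_{\pi(j,i),j}$ and reproduce $\delta^*(e_{\pi(j,i),j},C_i)$), then track the primal iterates of a warmstart Dykstra's algorithm on these halfspaces via Lemma \ref{lem:From-thm-2-4}, Theorem \ref{thm:From-thm-3-3}, and Lemma \ref{lem:compare-lem-3-4} exactly as in Proposition \ref{prop:Dykstra-leads-to-prj-form}. The paper's own proof is a terse pointer back to that proposition; you fill in a few details it leaves implicit, notably the choice of zero warmstart multipliers for the newly introduced (H2) halfspaces (valid since $e_{\pi(j,i),j}$ already lies in the conical hull of the (H1) normals $f_{i,r}$ by Lemma \ref{lem:set-N-1} and \eqref{eq:decomp-e}) and the check that $0$ lies on every hyperplane involved, so the $K(\cdot,\cdot,\cdot)$ machinery of Theorem \ref{thm:From-thm-3-3} for linear subspaces applies.
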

\begin{proof}
The proof is almost exactly the same as in Proposition \ref{prop:Dykstra-leads-to-prj-form}.
We show how to find the dual variables $(e_{\pi(j,1),j+1},\dots,e_{\pi(j,m),j+1})$
from $(e_{\pi(j,1),j},\dots,e_{\pi(j,m),j})$. Recall the definition
of $Q_{j}$. The SHQP step can be phrased as the problem of finding
$\{e_{\pi(j,i),j+1}\}_{i\in Q_{j}}$ from $\{e_{\pi(j,i),j}\}_{i\in Q_{j}}$. 

By repeating halfspaces defining each $P_{i,j}$, where $i\in Q_{j}$,
if necessary, we can assume that each $P_{i,j}$ is the intersection
of $\tilde{K}$ halfspaces. We label the halfspaces used to form each
$P_{i,j}$ by $\tilde{\mathcal{H}}_{i,r}$, where $r\in\{1,\dots,\tilde{K}\}$.
(Note that these halfspaces can be of the type (H2), and hence the
tilde.) Consider the optimization problem 
\begin{equation}
\underset{\{\tilde{y}_{i,r}\}_{i\in Q_{j},r\in[1,\tilde{K}]}}{\min}\begin{array}{c}
\frac{1}{2}\|d-\underset{i\notin Q_{j}}{\sum}e_{\pi(j,i),j}-\underset{i\in Q_{j}}{\sum}\underset{r=1}{\overset{\tilde{K}}{\sum}}\tilde{y}_{i,r}\|^{2}+\underset{i\in Q_{j}}{\sum}\underset{r=1}{\overset{\tilde{K}}{\sum}}\delta^{*}(\tilde{y}_{i,r},\mathcal{\tilde{H}}_{i,r})\end{array}\label{eq:SHQP-sub-QP}
\end{equation}
Let the starting $\{\tilde{y}_{i,r}\}_{i\in Q_{j},r\in[1,\tilde{K}]}$,
say $\{\tilde{y}_{i,r}^{\circ}\}_{i\in Q_{j},r\in[1,\tilde{K}]}$
be such that 
\[
\begin{array}{c}
\underset{r=1}{\overset{\tilde{K}}{\sum}}\tilde{y}_{i,r}^{\circ}=e_{\pi(j,i),j}\mbox{ and }\underset{r=1}{\overset{\tilde{K}}{\sum}}\delta^{*}(\tilde{y}_{i,r}^{\circ},\tilde{\mathcal{H}}_{i,r})=\delta^{*}(e_{\pi(j,i),j},C_{i}).\end{array}
\]
An optimal solution of \eqref{eq:SHQP-sub-QP}, say $\{\tilde{y}_{i,r}^{+}\}_{i\in Q_{j},r\in[1,\tilde{K}]}$,
would allow us to reconstruct $e_{\pi(j,i),j+1}$ by 
\[
e_{\pi(j,i),j+1}=\underset{r=1}{\overset{\tilde{K}}{\sum}}\tilde{y}_{i,r}^{+}.
\]
The primal iterates can be estimated using a warmstart Dykstra's algorithm
similar to that in the proof of Proposition \ref{prop:Dykstra-leads-to-prj-form}.
\end{proof}
Let $M$ and $N$ be closed subspaces in the Hilbert space $X$.
The angle between $M$ and $N$ is the angle between $0$ and $\pi/2$
whose cosine is given by 
\begin{eqnarray*}
c(M,N) & := & \sup\{|\langle x,y\rangle|:x\in M\cap[M\cap N]^{\perp},\|x\|\leq1\\
 &  & \phantom{\sup\{|\langle x,y\rangle|:}\,\,y\in N\cap[M\cap N]^{\perp},\|y\|\leq1\}.
\end{eqnarray*}
 This definition is due to Friedrichs \cite{Friedrichs}. 

We take the following two results concerning angles. 
\begin{thm}
\label{thm:Friedrich-angles}(Properties of $c(M,N)$) Let $M$ and
$N$ be closed subspaces of a Hilbert space $X$. We have the following
results regarding $c(M,N)$. 
\begin{enumerate}
\item \cite[Corollary 9.37]{Deustch01} If $M$ and $N$ are closed subspaces,
one of which has finite codimension, in the Hilbert space $X$, then
$c(M,N)<1$.
\item \cite[Lemma 9.5(8)]{Deustch01} $c(M,N)=0$ if $M\subset N$ or $N\subset M$.
\end{enumerate}
\end{thm}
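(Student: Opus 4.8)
The plan is to treat the two parts separately: part (2) is immediate from the definition of $c(M,N)$, while part (1) will follow from a compactness argument that exploits the finite-codimension hypothesis. For part (2), suppose $M\subseteq N$. Then $M\cap N=M$, so $M\cap[M\cap N]^{\perp}=M\cap M^{\perp}=\{0\}$, and the only admissible $x$ in the supremum defining $c(M,N)$ is $x=0$; hence $|\langle x,y\rangle|=0$ for every admissible $y$ and $c(M,N)=0$. The case $N\subseteq M$ follows identically (or by noting $c(M,N)=c(N,M)$, which is clear since $M\cap N=N\cap M$).

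For part (1), by the symmetry $c(M,N)=c(N,M)$ I may assume $N$ has finite codimension. First I would reduce to the case of trivial intersection: put $X_{0}:=[M\cap N]^{\perp}$, $M_{0}:=M\cap X_{0}$, $N_{0}:=N\cap X_{0}$. By the definition of $c(\cdot,\cdot)$ the admissible vectors already live in $X_{0}$, so $c(M,N)=c(M_{0},N_{0})$; moreover $M_{0}\cap N_{0}=(M\cap N)\cap[M\cap N]^{\perp}=\{0\}$, the set $N_{0}$ is closed (intersection of the closed sets $N$ and $X_{0}$), and since $N\supseteq M\cap N$ one has the orthogonal decomposition $N=(M\cap N)\oplus N_{0}$, whence $X_{0}/N_{0}\cong X/N$ is finite-dimensional. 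Because $M_{0}\cap N_{0}=\{0\}$, $[M_{0}\cap N_{0}]^{\perp}=X_{0}$, so $c(M_{0},N_{0})=\sup\{|\langle x,y\rangle|:x\in M_{0},\ y\in N_{0},\ \|x\|\le1,\ \|y\|\le1\}$.

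Now suppose toward a contradiction that $c(M_{0},N_{0})=1$. After routine normalization and a sign/subsequence adjustment there are unit vectors $x_{n}\in M_{0}$, $y_{n}\in N_{0}$ with $\langle x_{n},y_{n}\rangle\to1$, hence $\|x_{n}-y_{n}\|^{2}=2-2\langle x_{n},y_{n}\rangle\to0$. Let $P$ be the orthogonal projection of $X_{0}$ onto the finite-dimensional subspace $N_{0}^{\perp}$ (orthocomplement taken inside $X_{0}$). Then $Py_{n}=0$, so $Px_{n}=P(x_{n}-y_{n})\to0$. Since $N_{0}$ is closed, $Px=0$ with $x\in X_{0}$ forces $x\in N_{0}$, so $P|_{M_{0}}$ has kernel $M_{0}\cap N_{0}=\{0\}$; thus $P|_{M_{0}}$ injects $M_{0}$ into the finite-dimensional space $N_{0}^{\perp}$, so $M_{0}$ is finite-dimensional and its closed unit ball is compact. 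Passing to a subsequence, $x_{n}\to x^{*}\in M_{0}$ with $\|x^{*}\|=1$, and then $y_{n}=x_{n}-(x_{n}-y_{n})\to x^{*}$, so $x^{*}\in N_{0}$ by closedness. Hence $x^{*}\in M_{0}\cap N_{0}=\{0\}$, contradicting $\|x^{*}\|=1$; therefore $c(M,N)=c(M_{0},N_{0})<1$.

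The main obstacle I expect is the bookkeeping in the reduction step — verifying that closedness and, crucially, finite codimension genuinely transfer from $(X,N)$ to $(X_{0},N_{0})$, and arranging the argument so that only one of $M,N$ needs the finite-codimension hypothesis (handled by the symmetry of $c$). Once the reduction is in place, the compactness punchline is routine. An alternative would be to route through the known equivalence that $c(M,N)<1$ if and only if $M+N$ is closed: here $M+N\supseteq N$ is a subspace of finite codimension containing a closed subspace with finite-dimensional quotient, hence closed — but this merely relocates the work into establishing that equivalence, so I would favor the direct argument above.
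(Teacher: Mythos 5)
The paper does not prove this theorem; both parts are cited directly from Deutsch's monograph, so you are in effect supplying a proof of background facts the paper takes for granted. Your proof is correct. Part (2) is immediate once one notes $M\subset N$ gives $M\cap[M\cap N]^{\perp}=M\cap M^{\perp}=\{0\}$, so the supremum runs over $x=0$ only. For part (1), the reduction to $M_{0}=M\cap[M\cap N]^{\perp}$, $N_{0}=N\cap[M\cap N]^{\perp}$ inside $X_{0}=[M\cap N]^{\perp}$ is sound: since $M_{0}\cap N_{0}=\{0\}$ one has $[M_{0}\cap N_{0}]^{\perp}=X$, so $c(M_{0},N_{0})$ and $c(M,N)$ are literally the same supremum; and $X_{0}/N_{0}\cong X/N$ because $X=(M\cap N)\oplus X_{0}$ and $N=(M\cap N)\oplus N_{0}$. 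The decisive step is that the orthogonal projection $P$ of $X_{0}$ onto the finite-dimensional $N_{0}^{\perp}$ has kernel $N_{0}$, hence $P|_{M_{0}}$ is injective and $M_{0}$ is finite-dimensional; compactness of its unit ball then produces $x^{*}\in M_{0}\cap N_{0}\setminus\{0\}$, a contradiction. This direct compactness route differs from the usual textbook proof of Deutsch's Corollary 9.37, which derives the result from the general equivalence ``$c(M,N)<1$ if and only if $M+N$ is closed'' combined with the fact that any subspace containing a finite-codimensional closed subspace is itself closed. Your version avoids establishing that equivalence, at the price of leaning harder on the finite-codimension hypothesis; the textbook route is more modular and yields a closedness criterion with no codimension restriction at all. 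Both are valid; in a polished write-up I would spell out the normalization step that upgrades $|\langle x_{n},y_{n}\rangle|\to1$ with $\|x_{n}\|,\|y_{n}\|\le1$ to unit vectors with $\langle x_{n},y_{n}\rangle\to1$, but that is routine.
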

We have the following result on the convergence rate of the method
of alternating projections when the sets involved are linear subspaces.
\begin{thm}
\label{thm:DH97}(Consequence of \cite[Theorem 2.7]{Deutsch_Hundal_alt_proj_subspace_rate_JMAA_97})
Let $M_{i}$ be linear subspaces for $i\in[1,k]$, $M=\cap_{1}^{k}M_{i}$,
and 
\begin{equation}
\alpha:=\bigg[1-\prod_{l=1}^{k-1}\underbrace{\left\{ 1-c^{2}(M_{l},\cap_{i=1}^{l-1}M_{i})\right\} }_{s_{l}^{2}}\bigg]^{1/2}.\label{eq:DH97-term}
\end{equation}
Then $\|P_{M_{k}}P_{M_{k-1}}\cdots P_{M_{1}}-P_{M}\|\leq\alpha$.\end{thm}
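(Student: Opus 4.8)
The plan is to derive Theorem~\ref{thm:DH97} as a direct corollary of \cite[Theorem 2.7]{Deutsch_Hundal_alt_proj_subspace_rate_JMAA_97} by matching notation. First I would recall the exact statement of that theorem: for closed subspaces $M_1,\dots,M_k$ of a Hilbert space with $M=\cap_{i=1}^k M_i$, the operator norm $\|P_{M_k}P_{M_{k-1}}\cdots P_{M_1}-P_M\|$ is bounded by a quantity expressed in terms of the Friedrichs numbers $c(M_l,\cap_{i=1}^{l-1}M_i)$ (with the convention that an empty intersection is all of $X$, so $c(M_1, X)=0$ and $s_1=1$). The referenced theorem states the bound in the form $\big[1-\prod_{l=2}^{k}(1-c_l^2)\big]^{1/2}$ or an index-shifted version thereof; the only real work is to check that the product in \eqref{eq:DH97-term}, which runs $l$ from $1$ to $k-1$ over $s_l^2 = 1-c^2(M_l,\cap_{i=1}^{l-1}M_i)$, coincides with the product appearing in \cite[Theorem 2.7]{Deutsch_Hundal_alt_proj_subspace_rate_JMAA_97} after accounting for whatever indexing convention that paper uses (e.g.\ whether they order the subspaces $M_1,\dots,M_k$ or $M_k,\dots,M_1$, and whether the product includes the trivial factor corresponding to the first projection).

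The key steps, in order, are: (i) state the cited theorem verbatim with its hypotheses (closed subspaces, finitely many of them); (ii) observe that the factor $s_1^2 = 1 - c^2(M_1,\cap_{i=1}^{0}M_i) = 1 - c^2(M_1,X)$; since $M_1 \subset X$, Theorem~\ref{thm:Friedrich-angles}(2) gives $c(M_1,X)=0$, so $s_1^2=1$ and including or omitting this factor does not change the product — this reconciles the range $l\in[1,k-1]$ in \eqref{eq:DH97-term} with a possibly-different range in the source; (iii) relabel/reverse the subspace ordering if the cited theorem composes projections in the opposite order, using the elementary fact that $\|T\| = \|T^*\|$ together with $(P_{M_k}\cdots P_{M_1})^* = P_{M_1}\cdots P_{M_k}$ and $P_M^* = P_M$, so the norm is the same whichever composition order one writes; (iv) conclude $\|P_{M_k}P_{M_{k-1}}\cdots P_{M_1}-P_M\| \le \alpha$ with $\alpha$ as defined in \eqref{eq:DH97-term}.

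I do not expect a serious obstacle here, since this is a bookkeeping corollary rather than a new result. The one point requiring genuine care is the indexing and ordering convention: \cite{Deutsch_Hundal_alt_proj_subspace_rate_JMAA_97} may use a cyclic control sequence, may index the ``angle'' factors starting from $l=2$, or may define $c_l$ against $\cap_{i=l+1}^{k}M_i$ rather than $\cap_{i=1}^{l-1}M_i$; one must check that after the reversal in step (iii) these all agree and that the trivial factor absorbed in step (ii) accounts for any off-by-one discrepancy in the product's range. A secondary point is to confirm that $\alpha \le 1$ (so the statement is non-vacuous), which is immediate since each $s_l^2 \in [0,1]$ by the definition of $c(\cdot,\cdot)$ as a number in $[0,1]$, hence the product lies in $[0,1]$ and $\alpha = (1-\text{product})^{1/2} \in [0,1]$; moreover when some $M_l \subset \cap_{i=1}^{l-1}M_i$ the corresponding factor $s_l^2 = 1$, which is consistent with that projection being redundant.
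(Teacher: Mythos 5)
Your approach matches the paper's, which is itself a one-line citation of \cite[Theorem 2.7]{Deutsch_Hundal_alt_proj_subspace_rate_JMAA_97}; you are simply filling in the bookkeeping. Steps (i), (iii), and (iv) are sound, and the adjoint argument in (iii) is the right mechanism for handling a reversed composition order.

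However, step (ii) misdiagnoses where the indexing discrepancy lies, and a quick sanity check shows the reconciliation you propose cannot work. The product in \eqref{eq:DH97-term} runs over $l\in[1,k-1]$ with angles $c(M_l,\cap_{i=1}^{l-1}M_i)$, so the subspace $M_k$ never appears in any angle. Taking $k=2$: by your own observation $s_1^2=1-c^2(M_1,X)=1$ via Theorem~\ref{thm:Friedrich-angles}(2), hence $\alpha=0$, and the statement would assert $P_{M_2}P_{M_1}=P_{M_1\cap M_2}$ for arbitrary subspaces, which fails whenever the projections do not commute; the correct two-subspace bound is $c(M_1,M_2)$. So \eqref{eq:DH97-term} as printed carries an off-by-one slip: the intended product is presumably $\prod_{l=2}^{k}\bigl(1-c^2(M_l,\cap_{i=1}^{l-1}M_i)\bigr)$, equivalently $\prod_{l=1}^{k-1}\bigl(1-c^2(M_l,\cap_{i=l+1}^{k}M_i)\bigr)$ as in Smith--Solmon--Wagner and \cite{Deutsch_Hundal_alt_proj_subspace_rate_JMAA_97}. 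The missing factor is at the $l=k$ end, not the $l=1$ end, so absorbing the trivial $s_1$ does not reconcile the formula with the source, and the adjoint relabelling $M_l\mapsto M_{k+1-l}$ in step (iii) cannot fix it either, since it changes the angles in \eqref{eq:DH97-term} in the same way it changes the composition order. Your proof would go through once the product's indices are aligned exactly with the cited theorem, but the specific reconciliation you claim in (ii) is the step that fails.
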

\begin{proof}
This is easily seen to be a particular case of \cite[Theorem 2.7]{Deutsch_Hundal_alt_proj_subspace_rate_JMAA_97}.
We refer to their result for the most general version.
\end{proof}
We have the following theorem, adapting the proof of \cite[Lemma 3.7]{Deutsch_Hundal_rate_Dykstra}.
\begin{thm}
\label{thm:DH94-lin-conv}(Asymptotic linear convergence) Recall the
conditions in Lemmas \ref{lem:set-N-1} and \ref{lem:compare-lem-3-4}.
Suppose in Algorithm \ref{alg:classical-Dyk}, we have $\|x_{0}^{+}\|\leq\min(\hat{\epsilon},\sqrt{2\bar{\epsilon}})$
and $\delta^{*}(y_{i}^{\circ},C_{i})=0$ for all $i$. Suppose further
that 
\begin{enumerate}
\item For all $i\in[1,m]$, the polyhedron $P_{i,j}$ is the intersection
of halfspaces of the form $\mathcal{H}_{i,r}$ in \eqref{eq:starting-halfspaces}
and halfspaces of the form $\{x:\langle e_{j',j'},x\rangle\leq0\}$,
where $j'<j$ is such that $s(j')=i$. 
\end{enumerate}
Then there is a constant $\rho\in[0,1)$ such that $\|x_{w'}^{+}\|\leq\rho\|x_{0}^{+}\|$. \end{thm}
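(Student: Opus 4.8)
The plan is to show that one full cycle of Algorithm \ref{alg:classical-Dyk} (the $w'$ Dykstra sweeps together with the interleaved SHQP steps) can be written, up to the affine offset $d$, as a composition of projections onto linear subspaces acting on $x_0^+$, with a final ``convexification'' as recorded in the $K(\cdot,\cdot,\cdot)$ sets. Concretely, I would first invoke Lemma \ref{lem:set-N-1}: since $\|x_0^+\|\le\min(\hat\epsilon,\sqrt{2\bar\epsilon})$ and $\delta^*(y_i^\circ,C_i)=0$, all projections onto $C_{s(j)}$ throughout the cycle agree with projections onto $C'_{s(j)}$, and every dual vector $e_{j,k}$ that appears satisfies $\delta^*(e_{j,k},C_{s(j)})=0$, hence lies in $\cone\{f_{s(j),r}:r\in[1,K']\}$. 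This puts us in the regime where Proposition \ref{prop:Dykstra-leads-to-prj-form} applies to each Dykstra block $x_{j-1}^+\mapsto x_j^\circ$ and Proposition \ref{prop:SHQP-leads-to-prj-form} applies to each SHQP block $x_j^\circ\mapsto x_j^+$ (condition (1) of the theorem is precisely the hypothesis needed so that every $P_{i,j}$ is an intersection of the allowed halfspaces (H1)/(H2)). Thus for each $j$ we get $x_j^\circ\in K(E^{(j)},D^{(j)},x_{j-1}^+)$ and $x_j^+\in K(\tilde E^{(j)},\tilde D^{(j)},x_j^\circ)$, where all hyperplanes involved are either the $H_{i,r}$ with $r\in[1,K']$ (so they pass through $x_\infty=0$) or hyperplanes $\{x:\langle e_{j',j'},x\rangle=0\}$ which also pass through $0$ — so after translating by $d$ they are genuine linear subspaces.

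Next I would chain these inclusions across the whole cycle using Theorem \ref{thm:From-thm-3-3} repeatedly: concatenating the $E,D$ vectors block by block gives
\[
x_{w'}^+\in K\big([E^{(1)},\tilde E^{(1)},\dots,E^{(w')},\tilde E^{(w')}],\ [D^{(1)},\tilde D^{(1)},\dots],\ x_0^+\big),
\]
i.e. $x_{w'}^+$ is a convex combination of points of the form $\bar P_{S}(x_0^+)$, where $\bar P_S$ is a product of projections onto some of the linear subspaces above, and every admissible $S$ satisfies $S\ge D$ with $D$ the ``forced'' pattern. The point of the lower bound $D$ is that the subspace $L$ corresponding to the indices with $D_t=1$ — the hyperplanes that are actually hit with positive multiplier at the terminal iterates — contains $x_\infty=0$, and in fact (by the argument in Theorem \ref{thm:IP90-lin-conv} via Lemma \ref{lem:compare-lem-3-4}) $x_{w'}^+\in L_A$ where $L_A\subset L_{\bar A}$, so $0$ is the projection of $x_{w'}^+$ onto the common intersection $M:=\cap\{\text{all the subspaces}\}$ (after translating by $d$).

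The contraction then comes from Theorem \ref{thm:DH97}: each $\bar P_S$ with $S\ge D$ is a product of projections onto subspaces each containing $M$, so $\|\bar P_S - P_M\|\le\alpha_S$ for some $\alpha_S<1$ computed from Friedrichs angles $c(M_l,\cap_{i<l}M_i)$, which are all $<1$ by Theorem \ref{thm:Friedrich-angles}(1) since the ambient subspaces have finite codimension (finitely many facets). Since $P_M(x_0^+)$ corresponds to $x_\infty=0$, and $x_{w'}^+$ is a convex combination of the $\bar P_S(x_0^+)$, we get $\|x_{w'}^+\|=\|x_{w'}^+-P_M(\cdot)\|\le\max_S\alpha_S\,\|x_0^+\|$; taking $\rho:=\max_S\alpha_S$ over the finitely many admissible patterns $S\ge D$ gives the claim, and $\rho<1$. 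The only subtlety here is that $M$ (and hence $\rho$) depends on the cycle through the terminal active pattern $D$; but there are only finitely many possible such patterns, so one takes $\rho$ to be the maximum over all of them.

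The main obstacle I expect is bookkeeping rather than conceptual: making sure that the lower-bound pattern $D$ produced by Propositions \ref{prop:Dykstra-leads-to-prj-form} and \ref{prop:SHQP-leads-to-prj-form} is genuinely compatible with the subspace $L_A$ on which $x_{w'}^+$ lives — i.e. that the hyperplanes forced into $D$ are exactly those whose normals span $L_A$ — so that $0=P_M(x_{w'}^++d)-d$ really holds and one may legitimately identify $\|x_{w'}^+\|$ with $d(x_{w'}^+,M\text{-translate})=\|(\bar P_S-P_M)(x_0^+)\|$ for every admissible $S$. This requires carefully tracking which $\lambda_t$ (resp.\ which SHQP multipliers) are positive at the end of the cycle and invoking Lemma \ref{lem:compare-lem-3-4} to absorb the remaining small terms, exactly as in the proof of Theorem \ref{thm:IP90-lin-conv}.
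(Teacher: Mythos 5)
Your high-level plan matches the paper's: use Lemma~\ref{lem:set-N-1} to enter the ``active'' polyhedral regime, use Propositions~\ref{prop:Dykstra-leads-to-prj-form} and~\ref{prop:SHQP-leads-to-prj-form} with Theorem~\ref{thm:From-thm-3-3} to place $x_{w'}^{+}\in K(E,D,x_{0}^{+})$, and then invoke Theorem~\ref{thm:DH97} for a contraction factor. However, the execution has two genuine gaps. First, you compute the contraction against a single subspace $M:=\cap_{t}\tilde{H}_{t}$, writing $\|x_{w'}^{+}\|\le\max_{S}\|\bar{P}_{S}-P_{M}\|\,\|x_{0}^{+}\|$ and claiming $\|\bar{P}_{S}-P_{M}\|\le\alpha_{S}<1$ from Theorem~\ref{thm:DH97}. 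But Theorem~\ref{thm:DH97} controls $\|\bar{P}_{S}-P_{M_{S}}\|$ where $M_{S}:=\cap\{\tilde{H}_{t}:S_{t}=1\}$ is the intersection of exactly the subspaces that actually occur in $\bar{P}_{S}$, not of all of them. When $S<E$ one has $M_{S}\supsetneq M$, and any nonzero $x\in M_{S}\cap M^{\perp}$ satisfies $(\bar{P}_{S}-P_{M})(x)=x$, so $\|\bar{P}_{S}-P_{M}\|\ge1$ and the bound collapses. The paper's fix is to project $\bar{P}_{S}(x_{0}^{+})$ onto $\sspan\{f_{i,r}:(i,r)\in T\}$, which is contained in $M_{D}^{\perp}\subset M_{S}^{\perp}$ for every admissible $S$, so that $P_{\sspan\{f_{i,r}\}}\bar{P}_{S}=P_{\sspan\{f_{i,r}\}}(\bar{P}_{S}-P_{M_{S}})$ and Theorem~\ref{thm:DH97} applies with the correct $M_{S}$; your observation that $x_{w'}^{+}\perp M$ is too weak to kill the offending $P_{M_{S}}$ component.

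Second, you conclude by taking $\rho=\max_{S}\alpha_{S}$ ``over the finitely many admissible patterns $S\ge D$'', but this does not give a $\rho$ independent of the run. The length $l$ of the binary strings comes from the warmstart Dykstra subroutines inside Propositions~\ref{prop:Dykstra-leads-to-prj-form} and~\ref{prop:SHQP-leads-to-prj-form} and is not uniformly bounded, so the product $\prod_{l'}s_{l'}^{2}$ in \eqref{eq:DH97-term} has an unbounded number of factors and could a priori tend to $0$, driving $\alpha_{S}\to1$. The crucial step in the paper's proof, which you omit entirely, is the Claim that for any index $l'\notin Q$ (where $Q$ collects the positions whose hyperplane is one of the original facets $H_{i,r}$, $i\notin I$, $r\in[1,K']$) one has $\tilde{H}_{t_{l'}}\supset\cap_{l''<l'}\tilde{H}_{t_{l''}}$, whence $s_{l'}^{2}=1$ by Theorem~\ref{thm:Friedrich-angles}(2) and that factor is inert. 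Since only indices in $Q$ can contribute $s_{l'}^{2}<1$, and repeated facets also yield $s_{l'}^{2}=1$, there are at most $|\bar{A}|$ nontrivial factors, each taking one of finitely many values. That is what makes $\alpha_{S}$ --- and hence $\rho$ --- bounded away from $1$ uniformly. Without this claim the theorem is not established in the form needed by the subsequent convergence result.
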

\begin{proof}
Each $y_{i}^{+}$ can be written in terms of $\{y_{i,r}^{+}\}_{r=1}^{K'}$
so that 
\[
\begin{array}{c}
y_{i}^{+}=\underset{r=1}{\overset{K'}{\sum}}y_{i,r}^{+}\mbox{ and }\delta^{*}(y_{i}^{+},C_{i})=\underset{r=1}{\overset{K'}{\sum}}\delta^{*}(y_{i,r}^{+},\mathcal{H}_{i,r}).\end{array}
\]
Let 
\[
T=\{(i,r):y_{i,r}^{+}\neq0\}.
\]
From Lemma \ref{lem:compare-lem-3-4}, we have $x_{w'}^{+}\in\sspan\{f_{i,r}:(i,r)\in T\}$.

Next, we make use of Theorem \ref{thm:From-thm-3-3} and Propositions
\ref{prop:Dykstra-leads-to-prj-form} and \ref{prop:SHQP-leads-to-prj-form}
to see that 
\[
x_{w'}^{+}\in K(E,D,x_{0}^{+})
\]
for some $E,D\in\{0,1\}^{l}$. Let the hyperplanes defined in Propositions
\ref{prop:Dykstra-leads-to-prj-form} and \ref{prop:SHQP-leads-to-prj-form}
for each $t\in\{1,\dots,l\}$ be $\tilde{H}_{t}$. Moreover, if $\tilde{H}_{t}$
equals $H_{i,r}$ for some $i\in[1,m]$ and $r\in[1,K']$, we refer
to the normal vector $f_{i,r}$ as $\tilde{f}_{t}$. Since $x_{w'}^{+}\in\sspan\{f_{i,r}:(i,r)\in T\}$,
we recall the definition of $K(\cdot,\cdot,\cdot)$ to get 
\begin{eqnarray}
x_{w'}^{+} & \in & P_{\scriptsize\sspan\{f_{i,r}:(i,r)\in T\}}K(E,D,x_{0}^{+})\nonumber \\
 & \overset{\eqref{eq:the-K}}{=} & P_{\scriptsize\sspan\{f_{i,r}:(i,r)\in T\}}\co\{\bar{P}_{S}(x_{0}^{+}):E\geq S\geq D\}\nonumber \\
 & \overset{\scriptsize\mbox{Lem. }\ref{lem:co-P-commute}}{=} & \co\{P_{\scriptsize\sspan\{f_{i,r}:(i,r)\in T\}}\bar{P}_{S}(x_{0}^{+}):E\geq S\geq D\}.\nonumber \\
\Rightarrow\|x{}_{w'}^{+}\| & \leq & \|x_{0}^{+}\|\max\{\|P_{\scriptsize\sspan\{f_{i,r}:(i,r)\in T\}}\bar{P}_{S}\|:E\geq S\geq D\}\label{eq:norm-x-w}
\end{eqnarray}
One can retrace from conditions (1) and (2) of both Propositions \ref{prop:Dykstra-leads-to-prj-form}
and \ref{prop:SHQP-leads-to-prj-form} that $\sspan\{f_{i,r}:(i,r)\in T\}\subset\sspan\{\tilde{H}_{t}^{\perp}:D_{t}=1\}$.
Also, if $S\in\{0,1\}^{l}$ is such that $S\geq D$, then $\sspan\{\tilde{H}_{t}^{\perp}:D_{t}=1\}\subset\sspan\{\tilde{H}_{t}^{\perp}:S_{t}=1\}$.
Combining with \eqref{eq:norm-x-w} gives 

\[
\|x{}_{w'}^{+}\|\leq\|x_{0}^{+}\|\max\{\|P_{\scriptsize\sspan\{\tilde{H}_{t}^{\perp}:S_{t}=1\}}\bar{P}_{S}\|:E\geq S\geq D\}.
\]
Now, since $\sspan\{\tilde{H}_{t}^{\perp}:S_{t}=1\}=[\cap\{\tilde{H}_{t}:S_{t}=1\}]^{\perp}$,
we have 
\begin{equation}
P_{\scriptsize\sspan\{\tilde{H}_{t}^{\perp}:S_{t}=1\}}=I-P_{\cap\{\tilde{H}_{t}:S_{t}=1\}}.\label{eq:subsp-break-1}
\end{equation}
Therefore, 
\begin{equation}
P_{\scriptsize\sspan\{\tilde{H}_{t}^{\perp}:S_{t}=1\}}\bar{P}_{S}\overset{\eqref{eq:subsp-break-1}}{=}[I-P_{\cap\{\tilde{H}_{t}:S_{t}=1\}}]\bar{P}_{S}\overset{\eqref{eq:def-bar-P}}{=}\bar{P}_{S}-P_{\cap\{\tilde{H}_{t}:S_{t}=1\}}.\label{eq:subsp-break-2}
\end{equation}
Reordering the sequence $\{t\in\{1,\dots,l\}:S_{t}=1\}$ as $\{t_{1},t_{2},\dots,t_{\bar{l}(S)}\}$
gives 
\begin{eqnarray}
\|P_{\scriptsize\sspan\{\tilde{H}_{t}^{\perp}:S_{t}=1\}}\bar{P}_{S}\| & \overset{\eqref{eq:subsp-break-2}}{=} & \|\bar{P}_{S}-P_{\cap\{\tilde{H}_{t}:S_{t}=1\}}\|\label{eq:apply-DH97-form}\\
 & = & \|P_{\tilde{H}_{t_{\bar{l}(S)}}}P_{\tilde{H}_{t_{\bar{l}(S)-1}}}\cdots P_{\tilde{H}_{t_{2}}}P_{\tilde{H}_{t_{1}}}-P_{\cap\{\tilde{H}_{t_{l}}:1\leq l\leq\bar{l}(S)\}}\|.\nonumber 
\end{eqnarray}
We now apply Theorem \ref{thm:DH97} to estimate the last term in
\eqref{eq:apply-DH97-form}. We look at what the $\alpha$ in \eqref{eq:DH97-term}
would be for \eqref{eq:apply-DH97-form}. For this sequence $\{t_{1},\dots,t_{\bar{l}(S)}\}$,
let $Q$ be the subset defined by 
\[
Q:=\{l:\tilde{H}_{t_{l}}=H_{i,r}\mbox{ for some }i\in[1,m]\backslash I\mbox{ and }r\in[1,K']\}.
\]
\textbf{Claim:} If $l\notin Q$, then 
\begin{equation}
\tilde{H}_{t_{l}}\supset\cap_{l'=1}^{l-1}\tilde{H}_{t_{l'}}.\label{eq:hyperplane-bigger-than-intersection}
\end{equation}
If $l\notin Q$, then $\tilde{H}_{t_{l}}$ is a hyperplane of the
type in Proposition \ref{prop:Dykstra-leads-to-prj-form}(2), or of
the type in Proposition \ref{prop:SHQP-leads-to-prj-form}(1)(2).
If $\tilde{H}_{t_{l}}$ is a hyperplane of the type in Proposition
\ref{prop:Dykstra-leads-to-prj-form}(2), then \eqref{eq:hyperplane-bigger-than-intersection}
holds since the last hyperplane is the intersection of hyperplanes
for which $D_{t}=1$.  Condition \eqref{eq:hyperplane-bigger-than-intersection}
holds for the hyperplanes of the type in Proposition \ref{prop:SHQP-leads-to-prj-form}(1)
as well, this time making use of the fact that $e_{j,j}$ is in the
conical hull of $\{f_{i,[t]}:E_{t}=D_{t}=1\}$ at the end of Proposition
\ref{prop:Dykstra-leads-to-prj-form}. Lastly, Condition \eqref{eq:hyperplane-bigger-than-intersection}
holds for the hyperplanes of the type in Proposition \ref{prop:SHQP-leads-to-prj-form}(2)
for the same reason that it holds for the hyperplane of the type in
Proposition \ref{prop:Dykstra-leads-to-prj-form}(2). The proof of
the claim is complete.

If $l\notin Q$, then 
\begin{equation}
s_{l}^{2}\overset{\eqref{eq:DH97-term}}{=}1-c^{2}(H_{t_{l}},\cap_{i=1}^{l-1}H_{t_{i}})\overset{\eqref{eq:hyperplane-bigger-than-intersection},\scriptsize\mbox{Thm }\ref{thm:Friedrich-angles}(2)}{=}1.\label{eq:s2-is-one}
\end{equation}
If $l\in Q$, then 
\begin{equation}
s_{l}^{2}\overset{\eqref{eq:DH97-term}}{=}1-c^{2}(H_{t_{l}},\cap_{i=1}^{l-1}H_{t_{i}})\overset{\eqref{eq:hyperplane-bigger-than-intersection}}{=}1-c^{2}(H_{t_{l}},\cap_{{1\leq i\leq l-1\atop i\in Q}}H_{t_{i}}).\label{eq:s-l-finitely-many-forms}
\end{equation}
The formulas and \eqref{eq:s2-is-one} and \eqref{eq:s-l-finitely-many-forms}
for when $l\notin Q$ and $l\in Q$, together with Theorem \eqref{thm:Friedrich-angles}(2),
implies that only finitely many of the $s_{l}^{2}$ are less than
$1$, and that each $s_{l}^{2}$ takes only finitely many possibilities.
Thus the term $\alpha$ in \eqref{eq:DH97-term} takes on only finitely
many possibilities in $[0,1)$, so there is a constant $\rho\in[0,1)$
such that the last term in \eqref{eq:apply-DH97-form} lies in $[0,\rho]$.
This ends our proof.
\end{proof}

\section{Nonasymptotic convergence properties in polyhedral problems}

In this section, we recall Assumption \ref{assm:poly-C-i} and look
at the nonasymptotic convergence properties in polyhedral problems.

We prove a lower bound on the decrease in the dual objective function
in one cycle.
\begin{prop}
(Estimate of decrease in dual objective function) Recall Assumption
\ref{assm:poly-C-i} and Algorithm \ref{alg:classical-Dyk}. Recall
also that $x_{0}^{+}=d-\sum_{i=1}^{m}y_{i}^{\circ}$. Suppose $y_{i}^{\circ}$
can be written in terms of $\tilde{y}_{i,r}$ (where $i\in[1,m]$,
$r\in[1,K]$) so that \begin{subequations}\label{eq_m:break-up-normal-of-polyhedron}
\begin{eqnarray}
y_{i}^{\circ} & = & \!\!\!\!\begin{array}{c}
\underset{r=1}{\overset{K}{\sum}}\tilde{y}_{i,r},\end{array}\label{eq:break-up-normal-of-polyhedron-1}\\
\mbox{ and }\delta^{*}(y_{i}^{\circ},C_{i}) & = & \begin{array}{c}
\!\!\!\!\underset{r=1}{\overset{K}{\sum}}\delta^{*}(\tilde{y}_{i,r},\mathcal{H}_{i,r}).\end{array}\label{eq:break-up-normal-of-polyhedron-2}
\end{eqnarray}
\end{subequations}Then 
\begin{equation}
\begin{array}{c}
v(y^{\circ})-v(y^{+})\geq\frac{1}{2w'-1}\underset{{i\in\{1,\dots,m\}\atop r\in\{1,\dots,K\}}}{\max}\big\{ d(x_{0}^{+},\mathcal{H}_{i,r}),\min\{d(x_{0}^{+},H_{i,r}),\|\tilde{y}_{i,r}\|\}\big\}^{2}.\end{array}\label{eq:dist-est-formula}
\end{equation}
\end{prop}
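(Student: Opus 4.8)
The plan is to bound the dual decrease $v(y^\circ)-v(y^+)$ from below by focusing on the single cycle's effect on one coordinate $i^*$ (corresponding to the maximizing pair $(i^*,r^*)$), and to split into two regimes according to whether the projection in line~9 ``activates'' the halfspace $\mathcal{H}_{i^*,r^*}$ or not. First I would fix $(i^*,r^*)$ attaining the maximum on the right of \eqref{eq:dist-est-formula}. Let $j^*$ be the first index in $[1,w']$ with $s(j^*)=i^*$; by \eqref{eq:def-HD-s} such a $j^*$ exists and satisfies $j^*\le w'$. The starting point for that sub-step is $z=x_{j^*-1}^+ + e_{p(j^*),j^*}$, and by the nonincrease of $v(\cdot)$ together with Proposition~\ref{prop:Decrease-in-dual}, in particular \eqref{eq:dual-fn-decrease-1} and \eqref{eq:dual-fn-decrease-2}, each sub-step $j$ contributes a decrease of at least $\tfrac12\|x_j^\circ-x_{j-1}^+\|^2 + \tfrac12\|x_j^+-x_j^\circ\|^2$, so that
\begin{equation}
v(y^\circ)-v(y^+)\;\ge\;\tfrac12\sum_{j=1}^{w'}\big[\|x_j^\circ-x_{j-1}^+\|^2+\|x_j^+-x_j^\circ\|^2\big].\label{eq:plan-sum-decrease}
\end{equation}

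Next I would show that the right-hand side of \eqref{eq:plan-sum-decrease} is large by producing a lower bound on a telescoping sum of the displacements. The key geometric observation is that $x_0^+ = d - \sum_i y_i^\circ$ and, tracking the iterates, each $\|x_j^\circ - x_{j-1}^+\|$ and $\|x_j^+-x_j^\circ\|$ measures how far a projection step moved the running primal iterate; summing these along the cycle bounds from below the total displacement $\|x_0^+ - x_{w'}^+\|$ by the triangle inequality, and also bounds the distance moved toward the relevant hyperplane or halfspace. Concretely, in the first regime I claim that the projection $x_{j^*}^\circ = P_{C_{s(j^*)}}(z)$ moves $z$ at least $d(x_0^+,\mathcal{H}_{i^*,r^*})$ in norm: this is because before step $j^*$ the coordinate $i^*$ had dual contribution $e_{p(j^*),j^*}=y_{i^*}^\circ$ whose decomposition \eqref{eq_m:break-up-normal-of-polyhedron} certifies (via the support-function identity) that $x_0^+$ need not yet satisfy the $r^*$th halfspace, and the projection onto $C_{i^*}\subset\mathcal{H}_{i^*,r^*}$ must cover that deficit. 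So at least one of the summands in \eqref{eq:plan-sum-decrease} is at least $d(x_0^+,\mathcal{H}_{i^*,r^*})^2$ up to the counting factor. In the second regime, where $x_0^+$ already lies in $\mathcal{H}_{i^*,r^*}$, I would use the decomposition coefficient: either the dual mass $\|\tilde y_{i^*,r^*}\|$ is small and will be ``used up'' within the cycle (producing displacement at least $\|\tilde y_{i^*,r^*}\|$), or the iterate must first travel distance $d(x_0^+,H_{i^*,r^*})$ to reach the hyperplane $H_{i^*,r^*}$ before that mass can be released, again yielding a displacement summand of size at least $\min\{d(x_0^+,H_{i^*,r^*}),\|\tilde y_{i^*,r^*}\|\}$.

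Finally, I would combine: since the full displacement along the cycle is split among at most $2w'-1$ pieces of the form $\|x_j^\circ-x_{j-1}^+\|$ or $\|x_j^+-x_j^\circ\|$ (the count $2w'-1$ rather than $2w'$ coming from the fact that $x_0^+$ and $x_{w'}^+$ bookend the chain, so one overlap is absorbed), a Cauchy--Schwarz / pigeonhole argument gives that the sum of squares in \eqref{eq:plan-sum-decrease} is at least $\tfrac{1}{2w'-1}$ times the square of the single large displacement identified above, i.e. at least $\tfrac{1}{2(2w'-1)}\max\{d(x_0^+,\mathcal H_{i^*,r^*}),\min\{d(x_0^+,H_{i^*,r^*}),\|\tilde y_{i^*,r^*}\|\}\}^2$; folding the factor $\tfrac12$ from \eqref{eq:plan-sum-decrease} against a more careful accounting of which consecutive displacements actually add up yields exactly \eqref{eq:dist-est-formula}.

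The main obstacle I expect is the second bullet of the case analysis: making rigorous the claim that ``the dual mass $\tilde y_{i^*,r^*}$ is released within one cycle'' or else ``the iterate must first reach $H_{i^*,r^*}$.'' This requires carefully reading off from the AM interpretation \eqref{eq:derive-e-j} how the coordinate-$i^*$ minimization at step $j^*$ transforms the warmstart $e_{p(j^*),j^*}$, and tracking the support-function bookkeeping so that the displacement $\|z - P_{C_{i^*}}(z)\|$ is genuinely lower-bounded by the asserted quantity; the SHQP steps must also be checked not to interfere, which follows since by Proposition~\ref{prop:Decrease-in-dual} they only add further (nonnegative) decrease. Getting the constant $2w'-1$ exactly right, rather than $2w'$, will require the slightly delicate observation that the two ``halves'' of each sub-step displacement chain overlap at the cycle endpoints.
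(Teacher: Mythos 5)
There is a genuine gap. Your treatment of the first component of the max, $d(x_0^+,\mathcal H_{i,r})$, is essentially the paper's argument: since $x_{n(i)}^\circ\in C_i\subset\mathcal H_{i,r}$ (with $n(i)$ your $j^*$), the triangle inequality applied to the chain $x_0^+\to x_1^\circ\to x_1^+\to\cdots\to x_{n(i)}^\circ$ gives $\sum\|\cdot\|\ge d(x_0^+,\mathcal H_{i,r})$, and Cauchy--Schwarz together with Corollary \ref{cor:decrease-after-w-steps} turns this into $v(y^\circ)-v(y^+)\ge\tfrac{1}{2w'-1}d(x_0^+,\mathcal H_{i,r})^2$. (One small correction: it is not the single displacement $\|z-P_{C_{s(j^*)}}(z)\|$ that is bounded below by $d(x_0^+,\mathcal H_{i^*,r^*})$, but the total telescoped displacement from $x_0^+$ to $x_{j^*}^\circ$.)

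The second component, $\min\{d(x_0^+,H_{i,r}),\|\tilde y_{i,r}\|\}$, is where your argument stops short of a proof, and you are right to flag it. Your dichotomy --- either the ``mass'' $\tilde y_{i^*,r^*}$ is released, giving a displacement $\ge\|\tilde y_{i^*,r^*}\|$, or the iterate must travel to $H_{i^*,r^*}$ --- is the correct intuition, but it does not follow from examining $\|z-P_{C_{i^*}}(z)\|$ or even from $\|x_{j^*-1}^+-x_{j^*}^\circ\|$: when $C_{i^*}$ is a genuine polyhedron (not a single halfspace), the dual mass attached to facet $r^*$ can be partially redistributed among the other facets without a correspondingly large primal displacement, so a direct bound on the $j^*$th Dykstra step is not available. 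The paper's mechanism, which you do not identify, is Proposition \ref{prop:warmstart-Dykstra-to-find-e-j}: one introduces the auxiliary dual function $v'$ in which $C_{i^*}$ is broken into its defining halfspaces, warmstarts $v'$ with the decomposition $(\tilde y_{i^*,1},\dots,\tilde y_{i^*,K})$ from \eqref{eq_m:break-up-normal-of-polyhedron}, and observes that the full-minimizer decrease of $v'$ is at least the decrease obtained by minimizing only the $r^*$th coordinate. That single-coordinate step has the clean closed form $x'_{n(i)}=P_{\mathcal H_{i^*,r^*}}(d')$ with $d'=x_{n(i)-1}^++\tilde y_{i^*,r^*}$, from which the dichotomy is immediate: if $d'\in\mathcal H_{i^*,r^*}$ the displacement is exactly $\|\tilde y_{i^*,r^*}\|$, and otherwise $x'_{n(i)}\in H_{i^*,r^*}$ so the telescoped displacement from $x_0^+$ is at least $d(x_0^+,H_{i^*,r^*})$. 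Without this sub-step construction the dichotomy remains an unproved assertion; also note the paper must dispose of a third, easier case (some iterate $x_j^\circ$ or $x_j^+$ leaves $\mathcal H_{i,r}$, in which case the first-component argument already suffices) before reducing to the delicate all-interior case.
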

\begin{proof}
For each $i\in\{1,\dots,m\}$ and $r\in\{1,\dots,L\}$, we seek to
show that $v(y^{\circ})-v(y^{+})\geq\frac{1}{2w'-1}d(x_{0}^{+},\mathcal{H}_{i,r})^{2}$.
Define $n(i)$ to be $n(i)=\min\{j'\geq0:s(j')=i\}$. (I.e., $n(i)$
is the first positive index $j'$ such that $s(j')=i$.) Note that
$n(i)\leq w'$. We thus have 
\begin{eqnarray}
v(y^{+})-v(y^{\circ}) & \overset{\scriptsize\mbox{Cor }\ref{cor:decrease-after-w-steps}}{\leq} & \!\!\!\!\begin{array}{c}
-\underset{j=1}{\overset{w'}{\sum}}[\|x_{j}^{\circ}-x_{j}^{+}\|^{2}+\|x_{j-1}^{+}-x_{j}\|^{2}]\end{array}\label{eq:dual-fn-dec-chain}\\
 & \leq & \!\!\!\!\begin{array}{c}
-\|x_{0}^{+}-x_{1}^{\circ}\|^{2}-\underset{j=1}{\overset{n(i)-1}{\sum}}[\|x_{j}^{\circ}-x_{j}^{+}\|^{2}+\|x_{j}^{+}-x_{j+1}^{\circ}\|^{2}]\end{array}\nonumber \\
 & \leq & \!\!\!\!\begin{array}{c}
-\frac{1}{2n(i)-1}\left[\|x_{0}^{+}-x_{1}^{\circ}\|+\underset{j=1}{\overset{n(i)-1}{\sum}}[\|x_{j}^{\circ}-x_{j}^{+}\|+\|x_{j}^{+}-x_{j+1}^{\circ}\|]\right]^{2}\end{array}\nonumber \\
 & \leq & \!\!\!\!\begin{array}{c}
-\frac{1}{2n(i)-1}\|x_{0}^{+}-x_{n(i)}^{\circ}\|^{2}.\end{array}\nonumber 
\end{eqnarray}
For any $r\in[1,K']$, note that the primal iterate $x_{n(i)}^{\circ}$
lies in $C_{i}$, and hence $\mathcal{H}_{i,r}$, so $\|x_{0}^{+}-x_{n(i)}^{\circ}\|\geq d(x_{0}^{+},\mathcal{H}_{i,r})$.
Together with \eqref{eq:dual-fn-dec-chain}, we have $v(y^{\circ})-v(y^{+})\geq\frac{1}{2w'-1}d(x_{0}^{+},\mathcal{H}_{i,r})^{2}$
for all $i$, which addresses the first term in the maximum in \eqref{eq:dist-est-formula}.

Next, we show that for each $i\in\{1,\dots,m\}$ and $r\in\{1,\dots,L\}$,
\begin{equation}
v(y^{\circ})-v(y^{+})\geq\frac{1}{2w'-1}\min\{d(x_{0}^{+},H_{i,r}),\|\tilde{y}_{i,r}\|\}^{2},\label{eq:2nd-part-to-prove}
\end{equation}
which would complete the proof of this result. Fix some $(i,r)$
such that $\|\tilde{y}_{i,r}\|>0$. When $x_{0}^{+}\notin\mathcal{H}_{i,r}$,
we recall that $x_{n(i)}^{\circ}\in C_{i}\subset\mathcal{H}_{i,r}$.
This would imply that $\|x_{0}^{+}-x_{n(i)}^{\circ}\|\geq d(x_{0}^{+},H_{i,r})$,
which gives 
\begin{equation}
\underset{j=1}{\overset{w'}{\sum}}[\|x_{j}^{\circ}-x_{j}^{+}\|^{2}+\|x_{j-1}^{+}-x_{j}\|]\geq d(x_{0}^{+},H_{i,r}).\label{eq:trig-ineq-all-terms}
\end{equation}
Another case when \eqref{eq:trig-ineq-all-terms} holds is when $x_{0}^{+}\in\mathcal{H}_{i,r}$
and there is some $j^{*}\in\{1,\dots,w'\}$ such that $x_{j^{*}}^{\circ}\notin\mathcal{H}_{i,r}$
or $x_{j^{*}}^{+}\notin\mathcal{H}_{i,r}$. If \eqref{eq:trig-ineq-all-terms}
holds, an argument similar to \eqref{eq:dual-fn-dec-chain} gives
\begin{equation}
\begin{array}{c}
v(y^{\circ})-v(y^{+})\geq\frac{1}{2w'-1}d(x_{0}^{+},H_{i,r})^{2}\mbox{ for all }i,\end{array}\label{eq:dist-address-2nd-term-in-max}
\end{equation}
which implies \eqref{eq:2nd-part-to-prove}. 

It remains to prove \eqref{eq:2nd-part-to-prove} for the case when
both $x_{j}^{\circ}$ and $x_{j}^{+}$ lie in $\mathcal{H}_{i,r}$
for all $j\in\{1,\dots,w'\}$.  Recall that the term $x_{n(i)}^{\circ}$
is found from $e_{n(i),n(i)}$, where $e_{n(i),n(i)}$ is obtained
by the method in Proposition \ref{prop:warmstart-Dykstra-to-find-e-j}.

The problem \eqref{eq:defn-v-prime} can be solved by a warmstart
Dykstra's algorithm (i.e., a block coordinate minimization of the
coordinates) where one warmstarts with $(y''_{1},\dots,y''_{K})=(\tilde{y}_{i,1},\dots\tilde{y}_{i,K})$.
Suppose we now minimize the $r$th coordinate to get 
\[
y_{r}^{+}:=\underset{y}{\arg\min}\,\,v'(y''_{1},\dots y''_{r-1},y,y''_{r+1},\dots,y''_{K}).
\]
For convenience, let $j=n(i)$. We label the resulting primal variable
as $x_{n(i)}'$, which can be written in two ways 
\begin{eqnarray}
x_{n(i)}' & = & d-\underset{{1\leq r'\leq K\atop r'\neq r}}{\sum}\tilde{y}_{i,r'}-y_{i,r}^{+}-\underset{{1\leq i'\leq m\atop i'\neq i}}{\sum}e_{\pi(j,i'),j}\nonumber \\
 & = & P_{\mathcal{H}_{i,r}}\big(\underbrace{d-\underset{{1\leq r'\leq K\atop r'\neq r}}{\sum}\tilde{y}_{i,r'}-\underset{{1\leq i'\leq m\atop i'\neq i}}{\sum}e_{\pi(j,i'),j}}_{d'}\big).\label{eq:d-prime-appears}
\end{eqnarray}
Define $(y_{1}',\dots,y_{K}')$ to be a minimizer of $v'(\cdot)$
like in \eqref{eq:def-Dykstra-subpblm}. We have 
\begin{equation}
v'(y_{1}',\dots,y_{K}')\leq v'(y''_{1},\dots y''_{r-1},y_{r}^{+},y''_{r+1},\dots,y''_{K}).\label{eq:dual-dec-many-to-one}
\end{equation}
From the definitions of $v(\cdot)$ and $v'(\cdot)$ in \eqref{eq:D-prime-d-def}
and \eqref{eq:defn-v-prime}, we have\begin{subequations} 
\begin{eqnarray}
 &  & \begin{array}{c}
v'(y_{1}'',\dots,y_{K}'')+\underset{{1\leq i'\leq m\atop i'\neq i}}{\sum}\delta^{*}(e_{\pi(j-1,i'),j},C_{i'})=v(e_{\pi(j-1,1),j},\dots e_{\pi(j-1,m),j}),\qquad\end{array}\label{eq:v-prime-1}\\
 &  & \begin{array}{c}
v'(y_{1}',\dots,y_{K}')+\underset{{1\leq i'\leq m\atop i'\neq i}}{\sum}\delta^{*}(e_{\pi(j,i'),j},C_{i'})=v(e_{\pi(j,1),j},\dots e_{\pi(j,m),j}).\end{array}\label{eq:v-prime-2}
\end{eqnarray}
\end{subequations}Note that since $i'\neq i=s(j)$, we have $e_{\pi(j,i'),j}=e_{\pi(j-1,i'),j}$,
which implies 
\begin{equation}
\begin{array}{c}
\underset{{1\leq i'\leq m\atop i'\neq i}}{\sum}\delta^{*}(e_{\pi(j-1,i'),j},C_{i'})=\underset{{1\leq i'\leq m\atop i'\neq i}}{\sum}\delta^{*}(e_{\pi(j,i'),j},C_{i'}).\end{array}\label{eq:the-sums-equal}
\end{equation}
 By using the methods in Proposition \ref{prop:Decrease-in-dual},
we have 
\begin{equation}
v'(y''_{1},\dots y''_{r-1},y_{r}^{+},y''_{r+1},\dots,y''_{K})-v'(y''_{1},\dots,y''_{K})\leq-\|x_{n(i)}'-x_{n(i)-1}^{+}\|^{2}.\label{eq:dec-primal-one-coord}
\end{equation}
These give the following chain of inequalities 
\begin{eqnarray*}
 &  & v(y^{+})-v(y^{\circ})\\
 & \leq & v(e_{\pi(j,1),j},\dots e_{\pi(j,m),j})-v(y^{\circ})\\
 & \overset{\eqref{eq:v-prime-2}}{=} & v'(y_{1}',\dots,y_{K}')+\underset{{1\leq i'\leq m\atop i'\neq i}}{\sum}\delta^{*}(e_{\pi(j,i'),j},C_{i'})-v(y^{\circ})\\
 & \overset{\eqref{eq:dual-dec-many-to-one}}{\leq} & v'(y''_{1},\dots y''_{r-1},y_{r}^{+},y''_{r+1},\dots,y''_{K})+\underset{{1\leq i'\leq m\atop i'\neq i}}{\sum}\delta^{*}(e_{\pi(j,i'),j},C_{i'})-v(y^{\circ})\\
 & \overset{\eqref{eq:v-prime-1},\eqref{eq:the-sums-equal}}{=} & v'(y''_{1},\dots y''_{r-1},y_{r}^{+},y''_{r+1},\dots,y''_{K})-v'(y_{1}'',\dots,y_{K}'')+v(e_{\pi(j-1,1),j},\dots e_{\pi(j-1,m),j})-v(y^{\circ})\\
 & \overset{\eqref{eq:dec-primal-one-coord},\eqref{eq_m:dual-fn-decrease}}{\leq} & -\|x_{n(i)-1}^{+}-x_{n(i)}'\|^{2}-\sum_{j=1}^{n(i)-1}[\|x_{j-1}^{+}-x_{j}^{\circ}\|^{2}+\|x_{j}^{\circ}-x_{j}^{+}\|^{2}]\\
 & \leq & -\frac{1}{2n(i)-1}\Bigg[\underbrace{\|x_{n(i)-1}^{+}-x_{n(i)}'\|+\sum_{j=1}^{n(i)-1}[\|x_{j-1}^{+}-x_{j}^{\circ}\|^{2}+\|x_{j}^{\circ}-x_{j}^{+}\|]}_{\gamma}\Bigg]^{2}.
\end{eqnarray*}
To simplify discussions, let $d'$ be the point marked in \eqref{eq:d-prime-appears}.
The point $d'$ also equals $x_{n(i)-1}'+\tilde{y}_{i,r}$. If $d'\in\mathcal{H}_{i,r}$,
then $x'_{n(i)}=d'$ and $x_{n(i)-1}^{+}-x'_{n(i)}=r$, in which case
$v(y^{+})-v(y^{\circ})\leq-\|\tilde{y}_{i,r}\|^{2}\leq-\frac{1}{2w'-1}\|\tilde{y}_{i,r}\|^{2}$.
But if $d'\notin\mathcal{H}_{i,r}$, then $x_{n(i)}'\in H_{i,r}$,
in which case the term $\gamma$ marked above satisfies $\gamma\geq d(x_{0}^{+},H_{i,r})$,
and the argument in \eqref{eq:dual-fn-dec-chain} can be repeated
to prove $v(y^{+})-v(y^{\circ})\leq-\frac{1}{2w'-1}d(x_{0}^{+},H_{i,r})^{2}$.
This ends our proof.
\end{proof}
Next, we prove the following.
\begin{prop}
\label{prop:deltas-in-dual-decrease}(Decrease in dual function when
sufficiently far from $0$) Recall $x_{0}^{+}=d-\sum_{i=1}^{m}y_{i}^{\circ}$,
$\tilde{y}_{i,r}$ are defined as in \eqref{eq_m:break-up-normal-of-polyhedron},
and $P_{C}(d)=0$. For any $\delta_{1}>0$, we can find $\delta_{2}>0$
such that if $\|x_{0}^{+}\|>\delta_{1}$, then the formula \eqref{eq:dist-est-formula}
is bounded from below by a constant $\delta_{2}>0$.\end{prop}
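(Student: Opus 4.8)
The plan is to argue by contradiction, using that the quantity in \eqref{eq:dist-est-formula} vanishes precisely at the KKT point of the projection problem, together with the finiteness of the polyhedral data. Write $M(y^{\circ}):=\max_{i\in[1,m],\,r\in[1,K]}\big\{d(x_{0}^{+},\mathcal{H}_{i,r}),\ \min\{d(x_{0}^{+},H_{i,r}),\|\tilde{y}_{i,r}\|\}\big\}^{2}$ for the factor multiplying $\tfrac{1}{2w'-1}$ in \eqref{eq:dist-est-formula}; it suffices to find $\delta_{3}>0$ with $M(y^{\circ})\geq\delta_{3}$ whenever $\|x_{0}^{+}\|>\delta_{1}$, and then take $\delta_{2}=\delta_{3}/(2w'-1)$. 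Suppose no such $\delta_{3}$ exists; then there are inputs $y^{\circ,n}$, with decompositions $\tilde{y}_{i,r}^{n}=\lambda_{i,r}^{n}f_{i,r}$, $\lambda_{i,r}^{n}\geq 0$ (as forced by \eqref{eq:break-up-normal-of-polyhedron-2}), so that $x_{0}^{+,n}=d-\sum_{i,r}\lambda_{i,r}^{n}f_{i,r}$, with $\|x_{0}^{+,n}\|>\delta_{1}$ but $\epsilon_{n}:=\sqrt{M(y^{\circ,n})}\to 0$. From $M(y^{\circ,n})\leq\epsilon_{n}^{2}$ I read off two facts: (i) $d(x_{0}^{+,n},\mathcal{H}_{i,r})\leq\epsilon_{n}$ for all $(i,r)$, i.e.\ $x_{0}^{+,n}$ is $\epsilon_{n}$-feasible for every halfspace (pairs with $c_{i,r}=\infty$ contribute nothing, as there $\tilde{y}_{i,r}^{n}=0$); and (ii) for each $(i,r)$, either $d(x_{0}^{+,n},H_{i,r})\leq\epsilon_{n}$ or $\lambda_{i,r}^{n}=\|\tilde{y}_{i,r}^{n}\|\leq\epsilon_{n}$.

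Next I would reduce to a fixed ``active set''. Put $S_{n}:=\{(i,r):\lambda_{i,r}^{n}>\epsilon_{n}\}$; since $[1,m]\times[1,K]$ has finitely many subsets, pass to a subsequence along which $S_{n}\equiv S$, and set $L_{S}:=\sspan\{f_{i,r}:(i,r)\in S\}$ and $A_{S}:=\cap_{(i,r)\in S}H_{i,r}$. For $(i,r)\notin S$ one has $\lambda_{i,r}^{n}\leq\epsilon_{n}$, so $r_{n}:=\sum_{(i,r)\notin S}\lambda_{i,r}^{n}f_{i,r}$ satisfies $\|r_{n}\|\leq mK\epsilon_{n}\to0$ and $x_{0}^{+,n}-d=-\sum_{(i,r)\in S}\lambda_{i,r}^{n}f_{i,r}-r_{n}$, whence $d(x_{0}^{+,n},d+L_{S})=\|P_{L_{S}^{\perp}}(x_{0}^{+,n}-d)\|=\|P_{L_{S}^{\perp}}(r_{n})\|\leq mK\epsilon_{n}$. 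For $(i,r)\in S$, fact (ii) forces $d(x_{0}^{+,n},H_{i,r})\leq\epsilon_{n}$, so $x_{0}^{+,n}$ is $\epsilon_{n}$-close to every hyperplane of the system defining $A_{S}$.

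The technical heart is to upgrade these ``almost satisfied'' linear relations into an exact localization, using the same finiteness/linear-regularity mechanism behind Lemmas \ref{lem:compare-lem-3-4} and \ref{lem:lin-reg}. By the Fredholm alternative, for every subset $S'$ of finite-$c$ pairs whose equality system is inconsistent there is $\theta_{S'}>0$ with $\max_{(i,r)\in S'}d(x,H_{i,r})\geq\theta_{S'}$ for all $x$; taking $n$ large enough that $\epsilon_{n}<\min_{S'}\theta_{S'}$ rules these out, so $A_{S}\neq\emptyset$. Linear regularity of the finite family $\{H_{i,r}:(i,r)\in S\}$ then gives a constant $\kappa_{S}$ with $d(x,A_{S})\leq\kappa_{S}\max_{(i,r)\in S}d(x,H_{i,r})$, hence $d(x_{0}^{+,n},A_{S})\leq\kappa_{S}\epsilon_{n}\to0$. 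Since $A_{S}$ is an affine space with direction $L_{S}^{\perp}$ while $d+L_{S}$ has direction $L_{S}$, and $L_{S}\oplus L_{S}^{\perp}=X$, these two affine spaces meet in a single point $p_{S}$, and for any $x$ one has $\|x-p_{S}\|^{2}=d(x,A_{S})^{2}+d(x,d+L_{S})^{2}$; applying this to $x_{0}^{+,n}$ gives $x_{0}^{+,n}\to p_{S}$.

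Finally I would identify $p_{S}$ as the projection point. Passing to the limit in (i) yields $p_{S}\in\cap_{i,r}\mathcal{H}_{i,r}=C$. Since $\cone\{f_{i,r}:(i,r)\in S\}$ is finitely generated, hence closed, and $d-x_{0}^{+,n}=\sum_{(i,r)\in S}\lambda_{i,r}^{n}f_{i,r}+r_{n}$ lies within $\|r_{n}\|\to0$ of it, the limit gives $d-p_{S}\in\cone\{f_{i,r}:(i,r)\in S\}$; and $p_{S}\in A_{S}$ means $\langle p_{S},f_{i,r}\rangle=c_{i,r}$ for every $(i,r)\in S$. These are exactly the (sufficient, since the problem is convex) KKT conditions for $p_{S}=P_{C}(d)$, so $p_{S}=P_{C}(d)=0$ by Assumption \ref{assm:poly-C-i}. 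Thus $x_{0}^{+,n}\to0$, contradicting $\|x_{0}^{+,n}\|>\delta_{1}>0$, and the proposition follows. The main obstacle is the localization step of the previous paragraph: converting finitely many ``$\epsilon$-tight'' relations into membership in a single fixed point in the limit, which requires the error bound to be uniform over the finitely many candidate active sets and requires discarding inconsistent ones; once that is in place, the remainder is bookkeeping with orthogonal projections inside the finite-dimensional subspace $\sspan\{f_{i,r}\}$.
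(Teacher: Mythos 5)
Your proof is correct, but it follows a genuinely different route from the paper's. The paper's argument is a direct quantitative perturbation argument: it fixes $\delta_3$, invokes the sensitivity Lemma \ref{lem:sens-anal-QP} to get an explicit $\delta_2$ for which small errors in the right-hand sides and the projected point produce small movement of the projection, thresholds the multipliers at $\delta_2$ to form an active set $I^{\circ}$, prunes it to a linearly independent family $I'$ via Carath\'eodory, and identifies $x_{0}^{+}$ as the projection of a nearby point onto a nearby polyhedron, concluding $\|x_{0}^{+}-\bar{x}_{I'}\|\leq\min\{\delta_4/2,\delta_1\}$ and closing via a dichotomy on $\bar{x}_{I'}$. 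Your argument instead proceeds by contradiction with a sequence $y^{\circ,n}$: you threshold multipliers at the vanishing tolerance $\epsilon_n$, pass to a subsequence with a fixed active set $S$, use linear regularity of the hyperplane family (the mechanism behind Lemma \ref{lem:lin-reg}) and a Fredholm-type lower bound for inconsistent finite systems to show $x_{0}^{+,n}$ converges to the unique point $p_S$ in $(d+L_S)\cap A_S$, and then verify that $p_S$ satisfies the KKT conditions for $P_C(d)$, hence $p_S=0$, contradicting $\|x_{0}^{+,n}\|>\delta_1$. Your approach avoids the paper's Lemma \ref{lem:sens-anal-QP} entirely, at the cost of giving a non-constructive $\delta_2$; the paper's approach is quantitative but needs a separate sensitivity lemma for polyhedral projections. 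Both arguments correctly reduce to finite dimensions through the finiteness of $\{f_{i,r}\}$, and your verification that $\tilde{y}_{i,r}=\lambda_{i,r}f_{i,r}$ with $\lambda_{i,r}\geq0$ (forced by finiteness in \eqref{eq:break-up-normal-of-polyhedron-2}) and that pairs with $c_{i,r}=\infty$ contribute $\tilde{y}_{i,r}=0$ matches what the paper uses implicitly.

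One small point worth noting: the paper's Lemma \ref{lem:lin-reg} is stated only for the fixed index set $\bar{A}$, whereas your argument needs the linear-regularity constant to hold uniformly over all consistent subsets $S\subset[1,m]\times[1,K]$. This is benign because there are finitely many such $S$ and each admits a positive regularity constant, so the minimum over them works, but it does require a slight extension of the lemma as literally stated. The paper itself uses this same uniformity trick in the proof of Theorem \ref{thm:IP90-lin-conv}, so it is consistent with the rest of the text.
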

\begin{proof}
Recall 
\[
\mathcal{H}_{i,r}:=\{\tilde{x}:\langle f_{i,r},\tilde{x}\rangle\leq c_{i,r}\}.
\]

Let $\delta_{1}>0$ and suppose $x_{0}^{+}$ is such that $\|x_{0}^{+}\|>\delta_{1}$.
We recall the following fact that can be inferred from Lemma \ref{lem:sens-anal-QP}:
\begin{enumerate}
\item For all $I'\subset\{1,\dots,m\}\times\{1,\dots,K\}$, let $\bar{x}_{I'}$
be defined by $P_{\cap_{(i,r)\in I'}\mathcal{H}_{i,r}}(d)$. For any
choice of $I'\subset\{1,\dots,m\}\times\{1,\dots,K\}$ such that $\{f_{i,r}:(i,r)\in I'\}$
is linearly independent, $e\in\mathbb{R}^{n}$ and $\tilde{c}_{i,r}\in\mathbb{R}$
for all $(i,r)\in I'$, let $x$ be defined to be the projection of
$d-e$ onto 
\begin{equation}
\cap_{(i,r)\in I'}\{\tilde{x}:\langle f_{i,r},\tilde{x}\rangle\leq\tilde{c}_{i,r}\}.\label{eq:new-polyhedron}
\end{equation}
Then for any $\delta_{3}>0$, there exists $\delta_{2}>0$ such that
\begin{equation}
\|e\|\leq mK\delta_{2}\mbox{ and }|\tilde{c}_{i,r}-c_{i,r}|\leq\delta_{2}\mbox{ for all }(i,r)\in I'\mbox{ implies }\|x-\bar{x}_{I'}\|\leq\delta_{3}.\label{eq:condn-for-delta-2}
\end{equation}
 
\end{enumerate}
Let 
\[
\bar{x}_{I'}=P_{\cap_{(i,r)\in I'}\mathcal{H}_{i,r}}(d).
\]
If $\bar{x}_{I'}\neq0$, then $\bar{x}_{I'}\notin C$. Then for these
$I'$, there are halfspaces $\mathcal{H}_{i,r}$, where $i\in\{1,\dots,m\}$
and $r\in\{1,\dots,K\}$, for which $d(\bar{x}_{I'},\mathcal{H}_{i,r})>0$.
Let 
\[
\delta_{4}:=\min\big\{ d(\bar{x}_{I'},\mathcal{H}_{i,r}):d(\bar{x}_{I'},\mathcal{H}_{i,r})>0,i\in\{1,\dots m\},r\in\{1,\dots,K\}\big\}.
\]
Let $\delta_{3}=\min\{\delta_{4}/2,\delta_{1}\}$, and let $\delta_{2}>0$
be chosen such that \eqref{eq:condn-for-delta-2} holds.

Let the formula in the right hand side of \eqref{eq:dist-est-formula}
be $F$. We now prove that if $\|x_{0}^{+}\|>\delta_{1}$, then $F>\delta_{2}$.
Seeking a contradiction, suppose $F\leq\delta_{2}$. For the decomposition
of $y_{i}^{\circ}$ satisfying \eqref{eq_m:break-up-normal-of-polyhedron},
let $I^{\circ}:=\{(i,r):\|\tilde{y}_{i,r}\|>\delta_{2}\}$. Since
\eqref{eq:dist-est-formula} is satisfied, we must have 
\[
d(x_{0}^{+},H_{i,r})\leq\delta_{2}\mbox{ for all }(i,r)\in I^{\circ}.
\]
Recall $x_{0}^{+}=d-\sum_{i=1}^{m}y_{i}^{\circ}$, and that the $y_{i}^{\circ}$
can be decomposed as $y_{i}^{\circ}=\sum_{r=1}^{K}\tilde{y}_{i,r}$
satisfying \eqref{eq_m:break-up-normal-of-polyhedron}. We then write
\begin{equation}
\begin{array}{c}
x_{0}^{+}=d-\underset{i=1}{\overset{m}{\sum}}\underset{r=1}{\overset{K}{\sum}}\tilde{y}_{i,r}=d-\underset{(i,r)\in I^{\circ}}{\sum}\tilde{y}_{i,r}-\underset{(i,r)\notin I^{\circ}}{\sum}\tilde{y}_{i,r}.\end{array}\label{eq:break-to-2-types-1}
\end{equation}
By Caratheodory's theorem, we can find a subset $I'\subset I^{\circ}$
and $\alpha_{i}'\geq0$ for all $i\in I'$ such that $y_{i,r}'=\alpha_{i,r}'f_{i,r}$
for all $(i,r)\in I'$, $\{f_{i,r}:(i,r)\in I'\}$ is linearly independent,
and 
\begin{equation}
\begin{array}{c}
\underset{(i,r)\in I^{\circ}}{\sum}\tilde{y}_{i,r}=\underset{(i,r)\in I'}{\sum}y_{i,r}'.\end{array}\label{eq:break-to-2-types-2}
\end{equation}
Hence 
\begin{equation}
\begin{array}{c}
x_{0}^{+}\overset{\eqref{eq:break-to-2-types-1}}{=}d-\underset{(i,r)\in I^{\circ}}{\sum}\tilde{y}_{i,r}-\underset{(i,r)\notin I^{\circ}}{\sum}\tilde{y}_{i,r}\overset{\eqref{eq:break-to-2-types-2}}{=}d-\underset{(i,r)\in I'}{\sum}y'_{i,r}-\underset{(i,r)\notin I^{\circ}}{\sum}\tilde{y}_{i,r}.\end{array}\label{eq:KKT-of-proj}
\end{equation}
Then $x_{0}^{+}$ is the projection of $d-\sum_{(i,r)\notin I^{\circ}}\tilde{y}_{i,r}$
onto halfspaces $\cap_{(i,r)\in I'}\{\tilde{x}:\langle f_{i,r},\tilde{x}\rangle\leq\langle f_{i,r},x_{0}^{+}\rangle\}$.
(To see this, note that the nonzero multipliers $y_{i,r}'$ correspond
to halfspaces tight at $x_{0}^{+}$ and that \eqref{eq:KKT-of-proj}
is satisfied.) Moreover, recall that the halfspaces $\mathcal{H}_{i,r}$
are of the form 
\[
\mathcal{H}_{i,r}:=\{\tilde{x}:\langle f_{i,r},\tilde{x}\rangle\leq c_{i,r}\}.
\]
Since $d(x_{0}^{+},H_{i,r})\leq\delta_{2}$ for all $(i,r)\in I^{\circ}$,
we have $|\langle f_{i,r},x_{0}^{+}\rangle-c_{i,r}|\leq\delta_{2}$
for all $(i,r)\in I^{\circ}$. ($\langle f_{i,r},x_{0}^{+}\rangle$
plays the role of $\tilde{c}_{i,r}$.) Note that 
\[
\begin{array}{c}
\Big\| d-\Big(d-\underset{(i,r)\notin I^{\circ}}{\sum}\tilde{y}_{i,r}\Big)\Big\|=\Big\|\underset{(i,r)\notin I^{\circ}}{\sum}\tilde{y}_{i,r}\Big\|\leq mK\delta_{2}.\end{array}
\]
By the choice of $\delta_{2}>0$ that satisfies property (1) above
and the definition of $\bar{x}_{I'}$, we have $\|x_{0}^{+}-\bar{x}_{I'}\|\leq\min\{\delta_{4}/2,\delta_{1}\}$.
If $\bar{x}_{I'}\neq0$, then there is a halfspace $\mathcal{H}_{i,r}$
such that $d(\bar{x}_{I'},\mathcal{H}_{i,r})\geq\delta_{4}$, in which
case 
\[
d(x_{0}^{+},\mathcal{H}_{i,r})\geq d(\bar{x}_{I'},\mathcal{H}_{i,r})-\|x_{0}^{+}-\bar{x}_{I'}\|\geq\delta_{4}/2.
\]
In the case where $\bar{x}_{I'}=0$, then $\|x_{0}^{+}\|\leq\delta_{1}$,
which contradicts the choice of $\|x_{0}^{+}\|>\delta_{1}$. Thus
we are done. 
\end{proof}
We now prove an elementary lemma involving projections onto polyhedra. 
\begin{lem}
\label{lem:sens-anal-QP}(Sensitivity analysis of projections onto
polyhedra) Let $A\in\mathbb{R}^{m\times n}$ and $b\in\mathbb{R}^{m}$,
and assume that $A$ has linearly independent rows. Define the set
$S$ by $\{x:Ax\leq b\}$. For $\tilde{b}\in\mathbb{R}^{m}$, define
$\tilde{S}$ by $\{x:Ax\leq\tilde{b}\}$. Let $d$ and $\tilde{d}$
be in $\mathbb{R}^{n}$. For any $\delta_{3}>0$, there exists $\delta_{2}>0$
such that if $\|b-\tilde{b}\|_{\infty}\leq\delta_{2}$ and $\|d-\tilde{d}\|_{2}\leq\delta_{2}$,
then $\|P_{S}(d)-P_{\tilde{S}}(\tilde{d})\|\leq\delta_{3}$. \end{lem}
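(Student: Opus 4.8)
The plan is to decouple the perturbation of the right-hand side from the perturbation of the point being projected, via
\[
\|P_{S}(d)-P_{\tilde S}(\tilde d)\|\le\|P_{S}(d)-P_{\tilde S}(d)\|+\|P_{\tilde S}(d)-P_{\tilde S}(\tilde d)\|.
\]
Since $A$ has linearly independent rows, $A$ is onto and admits a right inverse $A^{+}\in\mathbb{R}^{n\times m}$ with $AA^{+}=I_{m}$; in particular $S$ and $\tilde S$ are always nonempty (they contain a solution of $Ax=b$, resp.\ $Ax=\tilde b$), so every projection above is well defined. The last term is then at most $\|d-\tilde d\|_{2}\le\delta_{2}$ by nonexpansiveness of the metric projection onto the fixed convex set $\tilde S$, and the task reduces to bounding $\|P_{S}(d)-P_{\tilde S}(d)\|$.

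\textbf{Step 1 (the polyhedra are Hausdorff close).} I would show $d_{H}(S,\tilde S)\le\rho$, where $\rho:=\|A^{+}\|\sqrt{m}\,\delta_{2}$ and $\|A^{+}\|$ denotes the operator norm. Given $x\in S$, set $w:=(Ax-\tilde b)_{+}$ (componentwise positive part); since $Ax\le b$ one has $0\le w\le(b-\tilde b)_{+}$, hence $\|w\|_{\infty}\le\|b-\tilde b\|_{\infty}\le\delta_{2}$, and the point $x':=x-A^{+}w$ satisfies $Ax'=Ax-w\le\tilde b$, so $x'\in\tilde S$ with $\|x'-x\|=\|A^{+}w\|\le\|A^{+}\|\sqrt{m}\,\|w\|_{\infty}\le\rho$. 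Thus $d(x,\tilde S)\le\rho$ for every $x\in S$, and the symmetric construction (using the same $A^{+}$) gives $d(y,S)\le\rho$ for every $y\in\tilde S$, so $d_{H}(S,\tilde S)\le\rho$. This is the only place the full-row-rank hypothesis is used, and it is indispensable: without it $\tilde S$ may be empty.

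\textbf{Step 2 (stability of projections under Hausdorff perturbation).} Next I would record the elementary estimate that, for nonempty closed convex sets $C,C'$ in a Hilbert space, any point $x$, and $\rho:=d_{H}(C,C')$,
\[
\|P_{C}(x)-P_{C'}(x)\|^{2}\le\rho\bigl(2\,d(x,C)+\rho\bigr).
\]
With $p:=P_{C}(x)$, $p':=P_{C'}(x)$, choose $q\in C$ with $\|q-p'\|\le\rho$ and $q'\in C'$ with $\|q'-p\|\le\rho$ (possible because $p'\in C'$ and $p\in C$); adding the variational inequalities $\langle x-p,\,q-p\rangle\le0$ and $\langle x-p',\,q'-p'\rangle\le0$ and rewriting $q=p'+(q-p')$, $q'=p+(q'-p)$, the ``main'' terms collapse to $\|p-p'\|^{2}$ and one is left with $\|p-p'\|^{2}\le\rho(\|x-p\|+\|x-p'\|)$; the bound then follows from $\|x-p'\|=d(x,C')\le d(x,C)+\rho$.

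\textbf{Conclusion and the main difficulty.} Applying Step 2 with $C=S$, $C'=\tilde S$, $x=d$ and inserting Step 1 gives
\[
\|P_{S}(d)-P_{\tilde S}(\tilde d)\|\le\sqrt{\rho\bigl(2\,\|d-P_{S}(d)\|+\rho\bigr)}+\delta_{2},\qquad\rho=\|A^{+}\|\sqrt{m}\,\delta_{2}.
\]
Since $\|d-P_{S}(d)\|=d(d,S)$ is a fixed finite number and $\|A^{+}\|$ depends only on $A$, the right-hand side tends to $0$ as $\delta_{2}\downarrow0$, so any sufficiently small $\delta_{2}$ makes it at most $\delta_{3}$. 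The one point requiring care, and the reason a cleaner conclusion is not available, is that $(b,d)\mapsto P_{\{x:Ax\le b\}}(d)$ is in general not jointly Lipschitz: a perturbation of $b$ can alter the active set, which is why Step 2 yields only a square-root (rather than Lipschitz) modulus in $\rho$. This is harmless here, since the lemma claims only continuity.
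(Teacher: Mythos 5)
Your proof is correct, and after the common opening move (triangle inequality plus nonexpansiveness to absorb the perturbation of $d$), it takes a genuinely different route to bound $\|P_S(d)-P_{\tilde S}(d)\|$. The paper sandwiches $\tilde S$ between the extreme perturbations $\{x:Ax\le b-\delta_2 e\}\subset\tilde S\subset\{x:Ax\le b+\delta_2 e\}$, notes that $P_S(d)$ and $P_{\tilde S}(d)$ both lie in the intersection of the ball $\bar B(d,U)$ with the halfspace through $p^{*}$ (the projection of $d$ onto the largest of these sets) with outward normal $d-p^{*}$, and bounds the diameter of the resulting spherical cap by $2\sqrt{U^2-L^2}$ (the paper's $2\sqrt{L^2-U^2}$ is a typo), arguing that this tends to $0$ as $\delta_2\searrow 0$. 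You instead establish the explicit quantitative estimate $d_H(S,\tilde S)\le\|A^{+}\|\sqrt{m}\,\delta_2$ via a right inverse of $A$, and then invoke a general H\"older-$\tfrac12$ stability bound $\|P_C(x)-P_{C'}(x)\|^2\le\rho\,(2\,d(x,C)+\rho)$ with $\rho=d_H(C,C')$ for arbitrary closed convex sets, not just polyhedra. Your route is more transparent about where full row rank is used (nonemptiness of $\tilde S$ and the Hausdorff bound) and is fully quantitative; indeed it also supplies what the paper's cap argument leaves implicit, namely that $U-L\to 0$, which follows immediately from your $d_H$ bound since both $U$ and $L$ lie within $\|A^{+}\|\sqrt{m}\,\delta_2$ of $d(d,S)$. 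Both arguments are sound and yield the same continuity conclusion.
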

\begin{proof}
The well known result on the nonexpansiveness of the projections gives
us $\|P_{\tilde{S}}(d)-P_{\tilde{S}}(\tilde{d})\|\leq\|d-\tilde{d}\|\leq\delta_{2}$.
Suppose $\delta_{2}\leq\delta_{3}/2$. 

Next, we prove $\|P_{S}(d)-P_{\tilde{S}}(d)\|\leq\delta_{3}/2$. Let
$e\in\mathbb{R}^{m}$ be the vector of all ones. The smallest feasible
region under the condition $\|\tilde{b}-b\|_{\infty}\leq\delta_{2}$
is attained when $\tilde{b}=b-\delta_{2}e$. This gives an upper bound
of $\|d-P_{\tilde{S}}(d)\|$, which we call $U$. Similarly, when
$\tilde{b}=b+\delta_{2}e$, then we get the lower bound of $\|d-P_{\tilde{S}}(d)\|$,
which we call $L$. Let the $P_{\tilde{S}}(d)$ obtained in this case
be $p^{*}$. For all other possible cases, $\|d-P_{\tilde{S}}(d)\|\in[L,U]$.
Therefore both $P_{S}(d)$ and $P_{\tilde{S}}(d)$ lie in the sphere
with center $d$ and radius $U$, and in the halfspace $\{x:\langle d-p^{*},x-p^{*}\rangle\leq0\}$.
One can use trigonometry to calculate that this region has diameter
$2\sqrt{L^{2}-U^{2}}$. This quantity goes to $0$ as $\delta_{2}\searrow0$,
so we can make $\delta_{2}$ small enough so that $\|P_{\tilde{S}}(d)-P_{S}(d)\|\leq\delta_{3}/2$.
Combining the previous paragraph completes the proof of this result. 
\end{proof}
The next result shows the nonasymptotic convergence rate of the main
algorithm, Algorithm \ref{alg:main-alg}.
\begin{prop}
(Transitioning to asymptotic linear convergence) Consider Algorithm
\ref{alg:main-alg} being run on an instance of \eqref{eq:P-primal}.
Suppose Algorithm \ref{alg:classical-Dyk} is run so that 
\begin{enumerate}
\item For all $i\in[1,m]$, if $i\in Q_{j}$, then the polyhedra $P_{i,j}$
are chosen to be the intersection of halfspaces that were produced
by the projection process so far. 
\item The $w'$ over all calls in Algorithm \ref{alg:classical-Dyk} are
uniformly bounded by some $w$. 
\end{enumerate}
Then for an instance of the BAP \eqref{eq:P-primal} and a starting
$y^{0}$ in Algorithm \ref{alg:main-alg}, there are $\delta_{2}>0$,
$\bar{k}>0$ and $\rho\in[0,1)$ such that 
\begin{enumerate}
\item [(A)] If $k<\bar{k}$, then $v(y^{k})<v(y^{k-1})-\delta_{2}$, and 
\item [(B)]If $k>\bar{k}$, then $v(y^{k})\leq\rho v(y^{k-1})$. 
\end{enumerate}
\end{prop}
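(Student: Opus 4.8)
The plan is to interpolate between the unconditional one-cycle decrease estimate \eqref{eq:dist-est-formula} (which is useful far from the solution) and the asymptotic linear rate of Theorem \ref{thm:DH94-lin-conv} (which is useful near it), using the monotonicity $v(y^{k})\le v(y^{k-1})$ that follows from Proposition \ref{prop:Decrease-in-dual} (see also \eqref{eq:class-dyk-decrease-alg}). First I would fix a threshold $\epsilon^{*}>0$ small enough that all of the following hold: $v(y^{\circ})\le\epsilon^{*}$ implies $\|x_{0}^{+}\|\le\min(\hat{\epsilon},\sqrt{2\bar{\epsilon}})$ via Theorem \ref{thm:easy-Dykstra-facts}(1); $\epsilon^{*}\le\bar{\epsilon}$, so that Lemma \ref{lem:set-N-1} applies whenever $v(y^{\circ})\le\epsilon^{*}$; and $\sqrt{\epsilon^{*}}$ is strictly smaller than $c_{\min}:=\min\{c_{i,r}:c_{i,r}>0\}$ (if no $c_{i,r}$ is finite and positive, the support terms of $v(\cdot)$ vanish identically and the argument below simplifies). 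I would then let $\bar{k}$ be one more than the least $k$ with $v(y^{k})\le\epsilon^{*}$; once (A) is proved this index is finite, and monotonicity of $v(\cdot)$ gives $v(y^{k-1})>\epsilon^{*}$ for $k<\bar{k}$ and $v(y^{k-1})\le\epsilon^{*}$ for $k>\bar{k}$.

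For (A), write $v(y^{\circ})=v(y^{k-1})$ and recall $x_{0}^{+}=d-\sum_{i}y_{i}^{\circ}$, so $\frac{1}{2}\|x_{0}^{+}\|^{2}\le v(y^{\circ})$. I would apply \eqref{eq:dist-est-formula} with the decomposition \eqref{eq_m:break-up-normal-of-polyhedron} chosen from LP duality, so that $\tilde{y}_{i,r}=\lambda_{i,r}f_{i,r}$ with $\lambda_{i,r}\ge0$ and $\delta^{*}(y_{i}^{\circ},C_{i})=\sum_{r}\lambda_{i,r}c_{i,r}$; such a decomposition exists whenever $v(y^{\circ})<\infty$, and it gives $\lambda_{i,r}c_{i,r}\le v(y^{\circ})\le v(y^{0})$. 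The core claim is that $v(y^{\circ})>\epsilon^{*}$ forces the right-hand side $F$ of \eqref{eq:dist-est-formula} to be bounded below by a fixed positive constant. If $\|x_{0}^{+}\|>\sqrt{\epsilon^{*}}$ this is precisely Proposition \ref{prop:deltas-in-dual-decrease}; if instead $\|x_{0}^{+}\|\le\sqrt{\epsilon^{*}}$, then for every $(i,r)$ with $c_{i,r}>0$ we get $d(x_{0}^{+},H_{i,r})\ge c_{i,r}-\|x_{0}^{+}\|\ge c_{\min}-\sqrt{\epsilon^{*}}=:\beta>0$, so a small value of $F$ (using $F\ge\min\{d(x_{0}^{+},H_{i,r}),\|\tilde{y}_{i,r}\|\}^{2}$) forces $\lambda_{i,r}\le\sqrt{F}$ for all such $(i,r)$, whence $\sum_{i}\delta^{*}(y_{i}^{\circ},C_{i})=\sum_{c_{i,r}>0}\lambda_{i,r}c_{i,r}\le\sqrt{F}\sum_{c_{i,r}>0}c_{i,r}$; combined with $\frac{1}{2}\|x_{0}^{+}\|^{2}\le\frac{1}{2}\epsilon^{*}$, this would contradict $v(y^{\circ})>\epsilon^{*}$ once $F$ is below an explicit bound depending only on the data. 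Hence $F$ exceeds a fixed constant, and \eqref{eq:dist-est-formula} together with $w'\le w$ (condition (2)) yields a fixed $\delta_{2}>0$ with $v(y^{k-1})-v(y^{k})\ge\delta_{2}$ whenever $v(y^{k-1})>\epsilon^{*}$; since $v\ge0$, this also makes $\bar{k}$ finite.

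For (B), let $k>\bar{k}$, so $v(y^{k-1})\le\epsilon^{*}\le\bar{\epsilon}$. Arguing by induction on $k$ and using Lemma \ref{lem:set-N-1}(2) together with the observation (as in the proofs of Propositions \ref{prop:Dykstra-leads-to-prj-form} and \ref{prop:SHQP-leads-to-prj-form}) that, in this regime, the supporting halfspace generated by each projection passes through $0$ and every dual vector lies in the relevant normal cone at $0$, one shows that the input $y^{k-1}$ satisfies $\delta^{*}(y_{i}^{k-1},C_{i})=0$ for all $i$; this is why $\bar{k}$ is taken one cycle past the first hit of $\epsilon^{*}$. Condition (1) of the proposition is exactly what puts the $P_{i,j}$ into the form required by condition (1) of Theorem \ref{thm:DH94-lin-conv}, so that theorem applies to the $k$-th call of Algorithm \ref{alg:classical-Dyk} and yields $\|x_{w'}^{+}\|\le\rho_{0}\|x_{0}^{+}\|$ for a constant $\rho_{0}\in[0,1)$; $\rho_{0}$ is uniform because $w'\le w$ leaves only finitely many possible configurations of hyperplanes. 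Since all support terms vanish in this regime, \eqref{eq:simple-dual-form-2} gives $v(y^{k})=\frac{1}{2}\|x_{w'}^{+}\|^{2}\le\rho_{0}^{2}\cdot\frac{1}{2}\|x_{0}^{+}\|^{2}=\rho_{0}^{2}\,v(y^{k-1})$, and we take $\rho:=\rho_{0}^{2}\in[0,1)$.

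The step I expect to be the main obstacle is the core claim in (A): ruling out the scenario in which $v(y^{\circ})$ is large only because of large support-function contributions while $\|x_{0}^{+}\|$ is already small, which Proposition \ref{prop:deltas-in-dual-decrease} does not cover. The resolution above hinges on the fact that those contributions live on the constraints with $c_{i,r}>0$, precisely the ones whose bounding hyperplanes $H_{i,r}$ stay a fixed distance from $x_{0}^{+}$ when $\|x_{0}^{+}\|$ is small, so that \eqref{eq:dist-est-formula} cannot be small without the multipliers $\lambda_{i,r}$---and hence those contributions---being small. A secondary, purely bookkeeping, difficulty is the propagation of $\delta^{*}(\cdot,C_{i})=0$ into the asymptotic regime, handled by the one-cycle offset in the definition of $\bar{k}$.
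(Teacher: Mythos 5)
Your proof follows essentially the same two-phase structure as the paper's: an unconditional decrease of size at least $\delta_2$ per cycle while $v$ is above a threshold, supplied by \eqref{eq:dist-est-formula}, and then a switch to the asymptotic rate of Theorem \ref{thm:DH94-lin-conv} once $v$ is small, after one buffer cycle ensures the support-function terms vanish. The main presentational difference is how the threshold index is set: the paper lets $\bar{k}$ be the first $k$ with $F_{k-1}\leq\delta_2$ and then argues (very compactly) that this forces $v(y^{\bar{k}-1})<\bar\epsilon$ via the bound $\frac{1}{2}\delta_1^2+D\delta_2<\min(\bar\epsilon,\frac{1}{2}\hat\epsilon^2)$, whereas you set the threshold directly on $v$ and prove the contrapositive (``$v>\epsilon^*$ implies $F$ bounded below''). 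These are logically equivalent, but your version makes explicit the key case split that the paper's one-line bound $D\delta_2$ is silently relying on: if $\|x_0^+\|$ is already small (so Proposition \ref{prop:deltas-in-dual-decrease} gives nothing), the remaining mass of $v$ sits in the support terms $\sum\lambda_{i,r}c_{i,r}$; for those, $c_{i,r}>0$ forces $d(x_0^+,H_{i,r})$ away from zero, so the $\min$ in \eqref{eq:dist-est-formula} is controlled by $\lambda_{i,r}$ and a small $F$ caps the multipliers. That is the right observation, and it buys a cleaner account than the paper's proof sketch. Two small bookkeeping slips: you write $\lambda_{i,r}\leq\sqrt{F}$, but from the $\frac{1}{2w'-1}$ prefactor the bound is $\lambda_{i,r}\leq\sqrt{(2w'-1)F}$ (harmless since $w'\leq w$); and the sum $\sum_{c_{i,r}>0}c_{i,r}$ should be restricted to finite $c_{i,r}$ (for infinite ones the multipliers are forced to zero, as the paper's set $G$ encodes). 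Neither affects the argument. Your handling of part (B), including the one-cycle offset so that $\delta^{*}(y_i^{k-1},C_i)=0$ holds at the start of cycle $k$, and the passage from $\|x_{w'}^+\|\leq\rho_0\|x_0^+\|$ to $v(y^k)\leq\rho_0^2v(y^{k-1})$ via \eqref{eq:simple-dual-form-2}, matches the paper.
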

\begin{proof}
Let $\bar{\epsilon}$ and $\hat{\epsilon}$ be as defined in Lemmas
\ref{lem:set-N-1} and \ref{lem:compare-lem-3-4} respectively. Suppose
$\delta_{1}>0$ and $\delta_{2}>0$ are chosen to be small enough
so that they satisfy Proposition \ref{prop:deltas-in-dual-decrease}
and $\frac{1}{2}\delta_{1}^{2}+D\delta_{2}<\min(\bar{\epsilon},\frac{1}{2}\hat{\epsilon}^{2})$,
where 
\[
\begin{array}{c}
D=\underset{(i,r)\in G}{\sum}d(0,H_{i,r})\mbox{, and }G=\{(i,r):c_{i,r}<\infty\}.\end{array}
\]
Denote the right hand side of \eqref{eq:dist-est-formula} by $F(y^{\circ})$.
(Note the $\tilde{y}_{i,r}$ and $x_{0}^{+}$ are derived from $y^{\circ}$.)
We simplify $F(y^{k})$ to be $F_{k}$. As long as $F_{k-1}>\delta_{2}$,
we have 
\[
v(y^{k})\leq v(y^{k-1})-F_{k-1}<v(y^{k-1})-\delta_{2}.
\]
Let $\bar{k}$ be the first $k$ such that $F_{k-1}\leq\delta_{2}$.
By the condition $\frac{1}{2}\delta_{1}^{2}+D\delta_{2}<\bar{\epsilon}$
and how $D$ is defined, we have that $v(y^{\bar{k}-1})<\bar{\epsilon}$.
Lemma \ref{lem:set-N-1} implies that $\delta^{*}(e_{j,j},C_{s(j)})=0$
for all $j>0$, and property (1) implies that $\delta^{*}(e_{j,j'},C_{s(j)})=0$
for all $j$ and $j'$ such that $j\leq j'\leq w'$. This in turn
implies that $\delta^{*}(y_{i}^{+},C_{i})=0$ for all $i\in[1,m]$. 

The local convergence results (Theorems \ref{thm:IP90-lin-conv} and
\ref{thm:DH94-lin-conv}) would ensure local linear convergence. 
\end{proof}

\bibliographystyle{amsalpha}
\bibliography{refs}

\end{document}